\newtheorem{theorem}{Theorem}
\newtheorem{lemma}{Lemma}
\newtheorem{corollary}{Corollary}
\newtheorem{proposition}{Proposition}
\theoremstyle{definition}
\newtheorem{remark}{Remark}
\newtheorem*{remark*}{Remark}
\newcommand{\eqdef}{\stackrel{\scriptscriptstyle\rm def}{=}}
\DeclareMathOperator{\diam}{diam}
\DeclareMathOperator{\dist}{Dist}
\DeclareMathOperator{\Crit}{{\rm Crit}}
\def\bN{\mathbb{N}}\def\bC{\mathbb{C}}
\def\bR{\mathbb{R}}
\def\cU{\EuScript{U}}
\def\cM{\EuScript{M}}
\def\cL{\EuScript{L}}
\def\ccL{\widehat\cL}
\DeclareMathSymbol{\varnothing}{\mathord}{AMSb}{"3F}
\renewcommand{\emptyset}{\varnothing}
\author{Katrin Gelfert} \address{IMPA, Estrada Dona Castorina 110, Rio de Janeiro, Brazil
}
\email{gelfert@pks.mpg.de}
\author{Feliks Przytycki} \address{Institute of Mathematics, Polish Academy of Sciences,
ul. \'{S}niadeckich 8, 00-956 Warszawa, Poland}
\email{feliksp@impan.gov.pl}
\urladdr{http://www.impan.gov.pl/~feliksp}
\author{{Micha\l} Rams} \address{Institute of Mathematics, Polish Academy of Sciences,
ul. \'{S}niadeckich 8, 00-956 Warszawa, Poland}
\email{rams@impan.gov.pl}
\urladdr{http://www.impan.gov.pl/~rams}
\begin{document}

\title[On the Lyapunov spectrum for rational maps]{Lyapunov spectrum for rational maps}

\begin{abstract}
We study the dimension spectrum of Lyapunov exponents for rational
maps on the Riemann sphere.
\end{abstract}

\begin{thanks}
{This research was supported by the EU FP6 Marie Curie programmes
SPADE2 and CODY. The research of F.\,P and M.\,R. were supported
by the Polish MNiSW Grant NN201 0222 33 `Chaos, fraktale i
dynamika konforemna'. K.\,G. was partially supported by the
Deutsche Forschungsgemeinschaft and by the Humboldt foundation. K.\,G. and M.\,R. thank
for the
hospitality of MPI~PKS Dresden and IM~PAN Warsaw where part of this
research was done.}
\end{thanks}

\keywords{Lyapunov exponents, multifractal spectra, iteration of
rational map, Hausdorff dimension, nonuniformly hyperbolic
systems}
\subjclass[2000]{Primary: %
37D25, 
37C45, 
28D99 
37F10 
}

\maketitle
\tableofcontents

\section{Introduction}

Let $f\colon \overline\bC\to\overline\bC$ be a rational function of degree
$d\ge 2$ on the Riemann sphere and let $J=J(f)$ be its Julia
set. 
Our goal is to study the spectrum of Lyapunov exponents of $f|_J$.
Given $x\in J$ we denote by $\underline\chi(x)$ and
$\overline\chi(x)$ the \emph{lower} and \emph{upper Lyapunov
exponent} at $x$, respectively, where
\[
\underline\chi(x)\eqdef\liminf_{n\to\infty}\frac{1}{n}\log\, \lvert(f^n)'(x)\rvert,\quad
\overline\chi(x)\eqdef\limsup_{n\to\infty}\frac{1}{n}\log\, \lvert(f^n)'(x)\rvert.
\]
If both values coincide then we call the common value the
\emph{Lyapunov exponent} at $x$ and denote it by $\chi(x)$. For
given numbers $0\le \alpha\le\beta$ we consider also the following
level sets
\[
\cL(\alpha,\beta) \eqdef
 \left\{x\in J\colon \underline\chi(x)=\alpha, \, \overline\chi(x)=\beta\right\}.
\]
We denote by $\cL(\alpha)\eqdef
\cL(\alpha,\alpha)$ the set of \emph{Lyapunov regular points} with  exponent
$\alpha$.
If $\alpha<\beta$ then $\cL(\alpha,\beta)$ is contained in the set of so-called
\emph{irregular points}
\[
\cL_{\rm irr}\eqdef\left\{ x\in J\colon \underline\chi(x)<\overline\chi(x)\right\}.
\]
Recall that it follows from the Birkhoff ergodic theorem that
$\mu(\cL_{\rm irr})=0$ for any $f$-invariant probability measure
$\mu$.

While the first results on the multifractal formalism go already back
to Besicovitch~\cite{Bes:34}, its systematic study has been
initiated by work of Collet, Lebowitz and
Porzio~\cite{ColLebPor:87}. The case of spectra of Lyapunov
exponents for conformal uniformly expanding repellers has been
covered for the first time in~\cite{BarPesSch:97} building also on
work by Weiss~\cite{Wei:99} (see~\cite{Pes:97} for more details
and references).
 To our best knowledge, the first results on
irregular parts of a spectrum were obtained in~\cite{Bes:34}.
Its first complete description  (for digit expansions) was given
by Barreira, Saussol, and Schmeling~\cite{BarSauSch:02}.

In this work we will formulate our results on  the spectrum of
Lyapunov exponents in terms of the topological pressure $P$. For
any $\alpha> 0$ let us first define
\[
F(\alpha)\eqdef 
\frac{1}{\alpha}\inf_{d\in\bR}\left( P_{f|J}(-d\log\,\lvert f'\rvert)+d\alpha\right).
\]
and
\[
F(0)\eqdef \lim_{\alpha\to 0+}F(\alpha).
\]
Let $[\alpha^-,\alpha^+]$ be the interval on which $F\neq -\infty$
(a more formal and equivalent definition is given in~\eqref{yu} below).

 Before stating our main results, we recall what is already known in the case
that $J$ is a uniformly expanding repeller with respect to $f$,
that is, $J$ is a compact $f$-forward invariant (i.e., $f(J)=J$)
isolated set such that $f|_J$ is uniformly expanding. Recall that
$f$ is said to be \emph{uniformly expanding} or \emph{uniformly
hyperbolic} on a set $\Lambda$ if there exist $c>0$ and $\lambda>1$ such
that for every $n\ge 1$ and every $x\in \Lambda$ we have $\lvert
(f^n)'\rvert\ge c \lambda^n$. Recall that a set $\Lambda$ is said to be
\emph{isolated} if there exists an open neighborhood $U\subset
\overline\bC$ of $\Lambda$ such that $f^n(x)\in U$ for every $n\ge 0$
implies $x\in \Lambda$.
In our setting the Julia set $J$ is  a uniformly expanding
repeller if it does not contain any critical point nor parabolic
point. Here a point $x$ is said to be \emph{critical} if $f'(x)=0$
and to be \emph{parabolic} if $x$ is periodic and its multiplier $
(f^{{\rm per}(x)})'(x)$ is a root of $1$. If $J$ is a uniformly
expanding repeller then for every $\alpha\in[\alpha^-,\alpha^+]$
we have~
\[
\dim_{\rm H}\cL(\alpha) =F(\alpha)
\]
(see~\cite{BarPesSch:97,Wei:99}) and $\cL(\alpha)=\emptyset$ if
and only if $\alpha\notin[\alpha^-,\alpha^+]$~\cite{Sch:97}. This
gives the full description of the regular part of the Lyapunov
spectrum. Moreover, in this setting of a uniformly expanding
repeller $J$, the interval $[\alpha^-,\alpha^+]$ coincides with
the closure of the range of the function $\alpha(d) =
-\frac{d}{ds}P_{f|J}(-s\log\,\lvert f'\rvert)|_{s=d}$ and the
spectrum can be written as

\[
F(\alpha(d)) = \frac{1}{\alpha(d)}\left( P_{f|J}(-d\log\,\lvert
f'\rvert)+d\alpha(d)\right) =\frac{h_{\mu_d}(f)}{\alpha(d)},
\]
where $\alpha(d)$ is the unique number satisfying
\[
-\frac{d}{ds}P_{f|J}(-s\log\,\lvert f'\rvert)|_{s=d} = \int\log\,\lvert f'\rvert\,d\mu_d
= \alpha(d)
\]
and where $\mu_d$ is the unique equilibrium state corresponding to
the potential $-d\log\,\lvert f'\rvert$. If $\log\,\lvert
f'\rvert$ is not cohomologous to a constant then we have
$\alpha^-<\alpha^+$, and $\alpha\mapsto \alpha F(\alpha)$ and
$d\mapsto P_{f|J}(-d\log\,\lvert f'\rvert)$ are real analytic
strictly convex functions that form a Legendre pair.

We now state our first main result.

\begin{theorem} \label{main1}
Let $f$ be a rational function of degree $\ge 2$ with
no critical points in its Julia set $J$. For
any $\alpha^-\le \alpha\le \beta \le \alpha^+$, $\beta>0$, we have
\[
\dim_{\rm H} \cL(\alpha, \beta)= \min\{F(\alpha), F(\beta)\}\,.
\]
In particular, for any $\alpha\in [\alpha^-,\alpha^+]\setminus\{0\}$ we have
\[
\dim_{\rm H} \cL(\alpha) = F(\alpha)\,.
\]
If there exists a parabolic point in $J$ (and hence $F(0)>-\infty$) then
\[
\dim_{\rm H} \cL(0) =\dim_{\rm H} J
= F(0)\,.
\]
Moreover,
\[
\left\{x\in J\colon \underline{\chi}(x)<\alpha^-\right\}
= \left\{x\in J\colon \overline{\chi}(x)>\alpha^+ \right\} = \emptyset.
\]
\end{theorem}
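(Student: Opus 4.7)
The plan is to split Theorem~\ref{main1} into its four assertions---the dimension formula for $\cL(\alpha,\beta)$, its specialization to the regular level sets $\cL(\alpha)$ with $\alpha\neq 0$, the statement at $\alpha=0$ when a parabolic point exists, and the emptiness of the tails outside $[\alpha^-,\alpha^+]$---and to prove each by combining a pressure/covering upper bound with a measure-theoretic lower bound. The framework is the standard multifractal formalism, adapted to handle the lack of uniform expansion caused by parabolic points.

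For the upper bound on $\dim_{\rm H}\cL(\alpha,\beta)$, I fix $d\in\bR$ and cover points of the relevant level set by balls of radius comparable to $\lvert(f^n)'(x)\rvert^{-1}$. Since $J$ contains no critical points, $f|_J$ enjoys bounded distortion on round preimages (Koebe), and on $\{\underline\chi\ge\alpha-\varepsilon\}$ (respectively $\{\overline\chi\le\beta+\varepsilon\}$) the Bowen-type partition function $\sum_{x\in f^{-n}(y)}\lvert(f^n)'(x)\rvert^{-d}$ is controlled by $e^{n(P_{f|J}(-d\log\,\lvert f'\rvert)+o(1))}$. A standard computation yields $\dim_{\rm H}\cL(\alpha,\beta)\le\tfrac{1}{\alpha}(P_{f|J}(-d\log\,\lvert f'\rvert)+d\alpha)$ for $d\ge 0$ and the analogous inequality with $\beta$ in place of $\alpha$ for $d\le 0$. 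Taking the infimum over $d$ on each side and exploiting $\alpha\le\beta$ gives $\min\{F(\alpha),F(\beta)\}$ as the upper bound.

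For the lower bound I construct, for each $\alpha\in[\alpha^-,\alpha^+]\setminus\{0\}$, an ergodic $f$-invariant measure $\mu_\alpha$ on $J$ with Lyapunov exponent $\alpha$ and Hausdorff dimension $F(\alpha)$; away from parabolic points these are equilibrium states for $-d\log\,\lvert f'\rvert$ with $d$ conjugate to $\alpha$ via Legendre duality, while in the presence of a parabolic point one must induce on a suitable Markov subsystem where the first-return map is uniformly expanding, apply the theorem of Barreira--Pesin--Schmeling there, and transfer back via Abramov's formula. The specialization to $\cL(\alpha)$ with $\alpha\ne 0$ is then immediate, and the parabolic case $\alpha=0$ is obtained by exhibiting orbits which approach the parabolic point slowly enough that the Lyapunov exponent vanishes but the corresponding subset of $J$ still carries the full dimension of $J$. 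For the irregular range $\alpha<\beta$, following Barreira--Saussol--Schmeling~\cite{BarSauSch:02}, I splice together typical orbits of $\mu_\alpha$ and $\mu_\beta$ in blocks of rapidly increasing length so that the Birkhoff averages oscillate between $\alpha$ and $\beta$; a Moran-type estimate for the resulting Cantor-like set then yields the lower bound $\min\{F(\alpha),F(\beta)\}$.

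Finally, the emptiness of the tails outside $[\alpha^-,\alpha^+]$ is a soft compactness argument: if $\overline\chi(x)>\alpha^+$ along some subsequence $n_k\to\infty$, any weak-$*$ accumulation point $\mu$ of the empirical measures $\tfrac{1}{n_k}\sum_{j<n_k}\delta_{f^j(x)}$ is $f$-invariant, and since $\log\,\lvert f'\rvert$ is continuous on $J$ (as $J$ contains no critical points) it satisfies $\int\log\,\lvert f'\rvert\,d\mu=\overline\chi(x)>\alpha^+$, contradicting the identification of $\alpha^+$ with the supremum of such integrals; the bound for $\underline\chi$ is symmetric. I expect the main technical obstacle to be the parabolic case---specifically, producing the measures $\mu_\alpha$ for $\alpha$ near $0$ and proving $\dim_{\rm H}\cL(0)=\dim_{\rm H}J$---because the natural induced map has unbounded return-time function, so classical thermodynamic formalism must be replaced by an inducing scheme with delicate control over the tails of returns near the parabolic points.
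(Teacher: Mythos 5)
Your overall architecture (upper bound by a thermodynamic covering argument, lower bound by measures plus a Cantor construction for the irregular part, emptiness of the tails by empirical measures) matches the paper's, and your treatment of the tails is exactly the paper's Lemma~\ref{lem:finit1}. Your upper bound via Bowen partition functions is a legitimate alternative to the paper's route via $e^{P(\varphi_d)-\varphi_d}$-conformal measures and the Frostman lemma, but in either route the step you gloss over is the decisive one: for $x\in\cL(\alpha,\beta)$ one must know that \emph{every} $q\in[\alpha,\beta]\setminus\{0\}$ arises as a subsequential limit of $\frac1n\log\lvert(f^n)'(x)\rvert$ along times at which $f^n(x)$ stays a definite distance from the parabolic points, so that the pullbacks avoid the forward critical orbits and Koebe distortion applies (Lemmas~\ref{lem:sequ} and~\ref{lem:bdp}); without this one does not get the bound by $\min_{\alpha\le q\le\beta}F(q)$, and "bounded distortion on round preimages" is not automatic merely because $J$ contains no critical points, since critical orbits from outside $J$ accumulate on $J$ at the parabolic points.

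The genuine gaps are in the lower bound. First, you posit for every $\alpha\in[\alpha^-,\alpha^+]\setminus\{0\}$ an ergodic invariant measure $\mu_\alpha$ with $\chi(\mu_\alpha)=\alpha$ and $\dim_{\rm H}\mu_\alpha=F(\alpha)$. At the endpoints $\alpha^\pm$ such a measure need not exist, and at $\alpha=0$ it cannot: by Ruelle's inequality any invariant measure with zero exponent has zero entropy and zero dimension, so $\cL(0)$ is invisible to invariant measures, and "orbits approaching the parabolic point slowly" is a gesture, not a construction of a set of dimension $F(0)=\dim_{\rm H}J$. Second, the Barreira--Saussol--Schmeling splicing with a Moran estimate uses a Markov coding that $J$ does not carry; turning spliced orbit blocks into a Cantor set with controlled separation, distortion and measure is precisely the hard part, and it is the content of the paper's bridge and w-measure construction (Lemma~\ref{shadowing} and Theorem~\ref{closing}), which works with a \emph{sequence} of equilibrium states $\mu_i$ on uniformly expanding Cantor repellers with $\chi(\mu_i)$ converging to the prescribed lower/upper exponents and $\dim_{\rm H}\mu_i$ converging to the target dimension, thereby handling the endpoints, $\cL(0)$ and the irregular sets in one stroke. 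Your inducing-plus-Abramov scheme would at best produce the interior measures $\mu_\alpha$ for $0<\alpha<\alpha^+$, which the paper instead obtains from Katok-type approximation by Cantor repellers joined by bridges (Propositions~\ref{decaf} and~\ref{jen}); either route is viable there, but neither inducing nor Abramov's formula touches the boundary cases where the real difficulty of the theorem lies.
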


We denote by $\Crit$ the set of all critical points of $f$.
Following Makarov and Smirnov~\cite[Section 1.3]{MakSmi:00}, we
will say that $f$ is \emph{exceptional} if there exists a finite,
nonempty set $\Sigma_f\subset \overline\bC$ such that
\[
f^{-1}(\Sigma_f)\setminus {\rm Crit} = \Sigma_{f} \,.
\]
This set need not be unique. We will further denote by $\Sigma$
the largest of such sets (notice that it has no more than 4 points).

\begin{theorem} \label{main2}
Let $f$ be a rational function of degree $\ge 2$. Assume that $f$ is
non-exceptional or that $f$ is exceptional but $\Sigma\cap
J=\emptyset$. For any $0< \alpha\le \beta \le \alpha^+$ we have
\[
\min\{F(\alpha), F(\beta)\}\le \dim_{\rm H} \cL(\alpha, \beta)\le
\max_{\alpha\leq q \leq \beta}F(q)\,.
\]
In particular, for any $\alpha\in [\alpha^-,\alpha^+] \setminus \{0\}$ we have
\[
\dim_{\rm H} \cL(\alpha) = F(\alpha)
\]
and
\[
\dim_{\rm H} \cL(0) \geq F(0)\,.
\]
Moreover,
\[
\left\{ x\in J\colon -\infty<\chi(x)<\alpha^-\right\}
 = \left\{ x\in J\colon \overline{\chi}(x)>\alpha^+\right\}
 = \emptyset
\]
and
\[
\dim_{\rm H}\left\{ x\in J\colon 0<\overline{\chi}(x)<\alpha^-\right\} = 0.
\]
\end{theorem}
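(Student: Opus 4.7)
The plan is to transfer the uniformly hyperbolic multifractal analysis recalled before Theorem~\ref{main1} to the general rational setting by exhaustion. The central ingredient (available under the standing non-exceptionality / $\Sigma\cap J=\emptyset$ hypothesis and due to work of Przytycki--Rivera-Letelier and related approximation theorems) is a sequence of isolated, $f$-invariant, uniformly expanding Cantor sets $X_n\subset J\setminus\Crit$ such that $P_{f|X_n}(-d\log\,\lvert f'\rvert)$ approximates $P_{f|J}(-d\log\,\lvert f'\rvert)$ on the relevant range of $d$, and such that for each $\alpha\in(\alpha^-,\alpha^+)$ one can pick an ergodic measure $\mu$ supported on some $X_n$ with exponent $\int\log\,\lvert f'\rvert\,d\mu$ close to $\alpha$ and $h_\mu(f)/\int\log\,\lvert f'\rvert\,d\mu$ close to $F(\alpha)$. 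This reduces all the dimension inequalities to familiar computations on expanding repellers.

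\emph{Lower bound.} Given $0<\alpha\le\beta\le\alpha^+$ (both in $[\alpha^-,\alpha^+]$, else the bound is vacuous), choose on a common $X_n$ two ergodic measures $\mu_\alpha,\mu_\beta$ with Lyapunov exponents $\chi_\alpha,\chi_\beta$ arbitrarily close to $\alpha,\beta$ and dimensions arbitrarily close to $F(\alpha),F(\beta)$ by the uniformly hyperbolic theory. On the corresponding symbolic coding I would build a Moran-like Cantor set by concatenating very long blocks of $\mu_\alpha$-typical and $\mu_\beta$-typical cylinders, with block lengths $N_k$ growing fast enough that at the ``$\alpha$-stages'' the time-averaged derivative is forced near $\chi_\alpha$ and at the ``$\beta$-stages'' near $\chi_\beta$, so that $\underline\chi\to\alpha$ and $\overline\chi\to\beta$ at every point of the Cantor set. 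A Frostman computation for the natural quasi-Bernoulli measure on this set (using the bounded-distortion available on $X_n$) gives $\dim_{\rm H}\ge \min\{F(\alpha),F(\beta)\}-\varepsilon$; letting $n\to\infty$ and $\varepsilon\to 0$ completes the lower bound. The limit cases $\alpha$ or $\beta$ equal to $\alpha^\pm$ follow by continuity.

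\emph{Upper bound.} For any $x\in\cL(\alpha,\beta)$ and any scale $n$ the effective exponent $\tfrac1n\log\,\lvert(f^n)'(x)\rvert$ lies eventually in $[\alpha-\varepsilon,\beta+\varepsilon]$; partition $\cL(\alpha,\beta)$ according to which $q\in[\alpha,\beta]$ is hit infinitely often by this $n$-average. On each such piece the standard Pesin/Ma\~n\'e-type pullback along hyperbolic (or good) times shows that at infinitely many scales one can cover points by preimages of fixed small discs with diameters $\sim e^{-nq}$; the number of these preimages is controlled by $e^{nP_{f|J}(-d\log\,\lvert f'\rvert)}/e^{-ndq}$, which is finite precisely when $d\ge F(q)$. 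Summing a Hausdorff $d$-sum yields $\dim_{\rm H}\le F(q)$, hence $\le\max_{[\alpha,\beta]}F(q)$ after taking the union over $q$ (a countable dense set suffices, by an $\varepsilon$-approximation and upper semicontinuity of the bound in $q$).

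\emph{The ``moreover'' assertions.} The fact that $\{\overline\chi>\alpha^+\}=\emptyset$ is the Przytycki upper bound on upper Lyapunov exponents in terms of $\alpha^+=\lim_{d\to-\infty}\tfrac{P_{f|J}(-d\log\,\lvert f'\rvert)}{-d}$; if some $x$ had $\overline\chi(x)>\alpha^++\varepsilon$ then the pressure inequality $P_{f|J}(-d\log\,\lvert f'\rvert)\ge -d\,\overline\chi(x)$ for $d\to-\infty$ would be violated. Symmetrically, $\{-\infty<\chi<\alpha^-\}=\emptyset$ uses the dual side $d\to+\infty$ together with the fact that a finite exponent is a legitimate Birkhoff limit; and the zero-dimension statement for $\{0<\overline\chi<\alpha^-\}$ follows because along subsequences realising the limsup one produces an ergodic invariant measure with exponent in $(0,\alpha^-)$ via Katok's construction, contradicting the definition of $\alpha^-$ unless the initial set has dimension zero, which is then made quantitative by the same pullback covering as in the upper bound.

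The hard step is the lower bound: one must perform the two-scale Moran construction not on a fixed symbolic space but on approximating hyperbolic subsystems whose pressure only converges to $P_{f|J}$, and control the ensuing bounded distortion and Frostman estimates uniformly as $n\to\infty$. This is exactly where the non-exceptional/$\Sigma\cap J=\emptyset$ hypothesis is indispensable, since it is what guarantees the pressure-approximation by expanding repellers that drives the whole argument.
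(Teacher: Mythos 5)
Your overall architecture --- expanding-repeller approximation for the lower bound, covering by pullbacks at good times with a preimage count for the upper bound --- is the same as the paper's, and your interior lower bound and your treatment of $\{\overline\chi>\alpha^+\}=\emptyset$ are essentially sound. However, there are three genuine gaps.

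First, the lower bound at the boundary of the spectrum does not ``follow by continuity.'' The level sets $\cL(\alpha,\beta)$ for different pairs $(\alpha,\beta)$ are pairwise disjoint, so a Moran set built inside one fixed repeller $X_n$ lies in $\cL(\alpha_n,\beta_n)$ for the approximate exponents actually attained there and contributes nothing at all to $\cL(\alpha,\beta)$ when, say, $\beta=\alpha^+$ is not attained on any $X_n$; the same problem makes $\dim_{\rm H}\cL(0)\ge F(0)$ inaccessible to your scheme, since no expanding repeller carries exponent $0$. The paper's Theorem~\ref{closing} resolves exactly this by building a single Cantor set threading through an infinite sequence of repellers $\Lambda_i$ (for varying iterates $f^{a_i}$) joined by ``bridges,'' with $\chi(\mu_i)$ converging to the targets and block lengths $n_i$ chosen recursively; the w-measure Frostman estimate is then carried out across all levels at once. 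This cross-system construction is the main technical content of the lower bound and cannot be replaced by a limit over $n$ of single-system constructions.

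Second, $\{-\infty<\chi<\alpha^-\}=\emptyset$ is \emph{not} dual to $\{\overline\chi>\alpha^+\}=\emptyset$. The empirical-measure argument works only in one direction because $\mu\mapsto\int\log\lvert f'\rvert\,d\mu$ is upper but not lower semicontinuous ($\log\lvert f'\rvert$ is unbounded below at critical points), so weak* limits give $\overline\chi(x)\le\chi(\mu)\le\alpha^+$ but no lower bound on $\underline\chi$. The paper's Lemma~\ref{lem:finit2} instead produces a periodic orbit with exponent close to $\chi(x)$ via a shadowing argument using hyperbolic times with (small negative) exponent, Pesin-type chains of balls avoiding $\Crit$, and the chain-of-disks distortion lemma; your appeal to Katok's construction for $\dim_{\rm H}\{0<\overline\chi<\alpha^-\}=0$ fails for the same semicontinuity reason. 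Third, in the upper bound the bounded distortion of the pullbacks at good times is not automatic once $\Crit\cap J\ne\emptyset$: the paper must first show (Proposition~\ref{prop:wander}, via the telescope lemma and the pullback count $N(y,n,R)\le K_2e^{n\varepsilon}$) that non-conical points with positive upper exponent form a set of Hausdorff dimension zero, and only then run the conformal-measure/Frostman estimate on the remaining conical points. Your sketch treats this distortion control as standard, which it is not in the presence of critical points in $J$.
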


If $f$ is exceptional and $\Sigma\cap J\ne \emptyset$ (this happens, for
example, for Chebyshev polynomials and some Latt\`es maps) then the
situation can be much different from the above-mentioned cases.
For example, the map $f(x)=x^2-2$ possesses countably many points
with Lyapunov exponent $-\infty$, two points with Lyapunov
exponent $2\log 2$, a set of dimension 1 of points with Lyapunov
exponent $\log 2$, and no other Lyapunov regular points. Hence,
for this map the Lyapunov spectrum is not complete in the interval
$[\alpha^-, \alpha^+]=[\log 2, 2\log 2]$.

The present paper does not provide a complete description of the
irregular part of the Lyapunov spectrum even in the case
$\Sigma\cap J=\emptyset$. We do not know how big the set $\cL(-\infty)$ is
except in the case when $f$ has only one critical point in $J$ (in
which case $\cL(-\infty)$ consists only of the backward orbit of
this critical point). Moreover, we do not know whether the set
$\{x\in J\colon \underline{\chi}(x)<\alpha^-\}$ contains any
points other than the backward orbits of critical points contained in
$J$ and we only have some estimation for the Hausdorff dimension
of the set $\cL(\alpha,\beta)$ even for values $\alpha$, $\beta\in
[\alpha^-,\alpha^+]$. 

The paper is organized as follows.
In Section~\ref{sec:tools} we introduce the tools we are going to use in this paper. In
particular, we construct a family of uniformly expanding Cantor repellers
with pressures pointwise converging to the pressure on $J$ (Proposition~\ref{decaf}).
Section~\ref{sec:3.5} discusses general properties of the spectrum of exponents.
In Section~\ref{sec:up} we obtain upper bounds for the Hausdorff dimension. Here we
use conformal measures to deal with conical points (Proposition~\ref{prop:1}) and we
prove that the set of non-conical points with positive upper Lyapunov exponent is very
small using the pullback construction (Proposition~\ref{prop:wander}).
In Section~\ref{sec:4} we derive lower bounds for the dimension. To do so, we first
consider the interior of the spectrum and we will use the  sequence of Cantor
repellers from Section~\ref{sec:tools} to obtain for any $\alpha\in (\alpha^-,\alpha^+)$
a big uniformly expanding subset of points with Lyapunov exponent $\alpha$ from which we
derive our estimates. We finally study the boundary of the spectrum and the irregular
part of the spectrum using a construction, that generalizes the w-measure construction
from~\cite{GelRam:}.

\newpage
\section{Tools for non-uniformly hyperbolic dynamical systems}\label{sec:tools}

\subsection{Topological pressure}\label{sec:press}

Given a compact $f$-invariant set $\Lambda\subset J$, we denote by $\cM(\Lambda)$ the
family of $f$-invariant Borel probability measures supported on $\Lambda$. We denote by
$\cM_{\rm E}(\Lambda)$ the subset of ergodic measures. Given $\mu\in\cM_{\rm
E}(\Lambda)$, we
denote by
\[
\chi(\mu)\eqdef\int_\Lambda\log\,\lvert f'\rvert\,d\mu
\]
the Lyapunov exponent of $\mu$. Notice that we have $\chi(\mu)\ge
0$ for any $\mu\in\cM(J)$~\cite{Prz:93}.

 Given $d\in\bR$, we define
the function $\varphi_d\colon J\setminus \Crit\to\bR$ by
\begin{equation}\label{varphit}
        \varphi_d(x) \eqdef
         -d\log\,\lvert f'(x)\rvert.
\end{equation}
Given a compact $f$-invariant uniformly expanding set
$\Lambda\subset J$,  the \emph{topological pressure} of
$\varphi_d$ (with respect to $f|_\Lambda$) is defined by
\begin{equation}\label{wirm}
 P_{f|\Lambda}(\varphi_d)\eqdef
      \max_{\mu\in\cM(\Lambda)}\left( h_\mu(f)+\int_\Lambda \varphi_d \,d\mu\right),
\end{equation}
where $h_\mu(f)$ denotes the entropy of $f$ with respect to $\mu$.
We simply write $\cM=\cM(J)$ and $P(\varphi_d)=P_{f|J}(\varphi_d)$
if we consider the full Julia set $J$ and if there is no confusion
about the system. A measure $\mu\in\cM$ is called
\emph{equilibrium state} for the potential $\varphi_d$ (with
respect to $f|_J$) if
\[
P(\varphi_d)=h_\mu(f)+\int_J\varphi_d\,d\mu.
\]

For every $d\in\bR$ we have the following equivalent
characterizations of the pressure function
(see~\cite{PrzRivSmi:04}, where further equivalences are shown).
We have
\begin{equation}
\begin{split}
P(\varphi_d) &=  \sup_{\mu\in\cM_{\rm E}^+}\left( h_\mu(f)+\int_J
\varphi_d \,d\mu\right)\\
&= \sup_{\Lambda}P_{f|\Lambda}(\varphi_d).
\end{split}\label{Phyp}
\end{equation}
Here in the first equality the supremum is taken over the set
$\cM_{\rm E}^+$ of all ergodic $f$-invariant Borel probability
measures on $J$ that have a positive Lyapunov exponent and are
supported on some $f$-invariant uniformly expanding subset of $J$.
In the second equality  the supremum is taken over all uniformly
expanding repellers $\Lambda\subset J$. In fact, in the second
equality it suffices to take the supremum over all uniformly
expanding  Cantor  repellers, that is, uniformly expanding
repellers  that are limit sets of finite graph directed systems
satisfying the strong separation condition with respect to $f$,
see Section~\ref{sec:hypsub}.

Let us introduce some further notation. Let
 \begin{equation}\label{yu}
 \begin{split}
\alpha^- &\eqdef
 \lim_{d\to\infty} - \frac 1 d P(\varphi_d) = \inf_{\mu\in \cM} \chi(\mu), \\
\alpha^+ &\eqdef
  \lim_{d\to -\infty} - \frac 1 d P(\varphi_d) = \sup_{\mu\in \cM} \chi(\mu),
\end{split}\end{equation}
where the given characterizations follow easily from the
variational principle.

Recall that, given $\alpha>0$, we define
\begin{equation}\label{def:Fa}
F(\alpha) \eqdef \frac{1}{\alpha} \inf_{d\in\bR}
\left(P(\varphi_d)+\alpha d \right)
\end{equation}
and
\begin{equation}\label{def:Fb}
F(0) \eqdef \lim_{\alpha\to 0+} F(\alpha).
\end{equation}
Note that
\[
F(0)=d_0\eqdef \inf\{d\colon P(\varphi_d )=0\}.
\]

\subsection{Building bridges between unstable islands}\label{sec:hypsub}

We describe a construction of connecting two given hyperbolic
subsets of the Julia set by ``building bridges" between the sets.%
\footnote{This is a precise realization of an idea of
Prado~\cite{Pra:}.}

We call a point $x\subset J$ \emph{non-immediately post-critical}
if there exists some preimage branch $x_0=x=f(x_1)$, $x_1=f(x_2)$,
$\ldots$ that is dense in $J$ and disjoint from $\Crit$. If $f$ is
non-exceptional or if it is exceptional but $\Sigma\cap
J=\emptyset$ then for every hyperbolic set all except possibly
finitely many points (in particular, all periodic points) are
non-immediately post-critical.


We will now consider  a set $\Lambda$ that is an $f$-uniformly
expanding Cantor repeller (ECR for short), that is,  a uniformly
expanding repeller and a limit set of a finite graph directed
system (GDS)  satisfying the strong separation condition (SSC)
with respect to $f$. Recall that by a GDS satisfying the SSC with
respect to $f$ we mean a family of domains and maps satisfying the
following conditions (compare~\cite[pp.~3,~58]{MauUrb:00}):
\begin{itemize}
\item[(i)] There exists a finite family $\cU= \{U_k\colon
k=1,...,K\}$ of open connected  (not necessarily simply connected)
domains in the Riemann sphere with pairwise disjoint closures.
\item[(ii)] There exists a family $G=\{g_{k\ell}\colon k,\ell\in
\{1,...,K\}\}$  of branches of $f^{-1}$ mapping $\overline{
U_\ell}$ into $U_k$ with bounded distortion (not all pairs
$k,\ell$ must appear here).\\
Note that a general definition of GDS allows many maps $g$ from
each $\overline{ U_\ell}$ to each $U_k$. Here however there can be
at most one, since we assume that $f$-critical points are far away
from $\Lambda$ and that the maps $g$ are branches of $f^{-1}$.
\item[(iii)] We have
\[
\Lambda= \bigcap_{n=1}^\infty\bigcup_{k_1,...,k_n} g_{k_1k_2}\circ
g_{k_2k_3}\circ ... \circ g_{k_{n-1}k_n}.
\]
We assume that we have $f(\Lambda)=\Lambda$ and hence that
for each $k$ there exists $\ell$ and for each $\ell$ there exists $k$ such that
$g_{k\ell}\in G$.
\end{itemize}
We can view $k=1$, $...$, $K$ as vertices and  $g_{k\ell}$ as
edges from $\ell$ to $k$ of a directed graph
$\Gamma=\Gamma({\cU},G)$.

This definition easily implies that $f$ is uniformly expanding on
the limit set $\Lambda$ of such a GDS, and that $\Lambda$ is a
repeller for $f$. Clearly $\Lambda$ is a Cantor set. In fact a
sort of converse is true (though we shall not use this fact in this
paper, but it clears up the definitions). Namely we observe  the
following fact.

\begin{lemma} \label{lem:1111}
    If $\Lambda\subset J$ is an $f$-invariant compact uniformly
expanding set that is  a Cantor set, then $\Lambda$ is contained
in the limit set of a GDS satisfying the SSC (this limit set can
be chosen to be contained in an arbitrarily small neighborhood of
$\Lambda$). Hence, $\Lambda$ is contained in an $f$-ECR set.
\end{lemma}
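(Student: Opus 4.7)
My plan is to build a Markov partition of $\Lambda$ into clopen pieces of small diameter, to enclose these pieces in small pairwise disjoint topological disks in $\overline{\bC}$, and to realize the admissible Markov transitions as branches of $f^{-1}$, thereby obtaining a GDS satisfying the SSC whose limit set is an ECR containing $\Lambda$. Since $\Lambda$ is uniformly expanding it is bounded away from $\Crit$, so there is $\delta_0>0$ such that on the $\delta_0$-neighborhood of $\Lambda$ every local branch of $f^{-1}$ is well-defined and has bounded Koebe distortion. After replacing the Euclidean metric by a conformally equivalent adapted metric (obtained by averaging $\lvert (f^i)'\rvert$ over an initial block of large length $N$), I may additionally assume that every such inverse branch is a strict contraction in a fixed neighborhood of $\Lambda$.

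Using compactness and total disconnectedness of $\Lambda$, I cover $\Lambda$ inside any prescribed neighborhood by finitely many pairwise disjoint topological disks $V_1,\ldots,V_M$ with mutually disjoint closures such that each trace $\Lambda\cap V_j$ is clopen in $\Lambda$ of arbitrarily small diameter. Since $f|_\Lambda$ is a local homeomorphism, iterated common refinements of the initial cover with the preimage partitions $f^{-i}\{V_j\}$, together with a standard regrouping for uniformly expanding Cantor sets, produce a finite Markov partition $\{\Lambda_k\}_{k=1}^K$ of $\Lambda$ by clopen pieces of diameter less than any prescribed $\varepsilon$, with $f(\Lambda_k)$ a union of some of the $\Lambda_\ell$'s. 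I then enclose each $\Lambda_k$ in a small open topological disk $U_k\subset \overline{\bC}$ with $\overline{U_k}$ pairwise disjoint, $U_k\cap\Lambda=\Lambda_k$, and $U_k$ contained in the prescribed neighborhood of $\Lambda$.

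For each admissible pair $(k,\ell)$ with $\Lambda_\ell\subset f(\Lambda_k)$, let $g_{k\ell}$ denote the unique branch of $f^{-1}$ sending $\Lambda_\ell$ into $\Lambda_k$. By the adapted-metric contraction together with bounded distortion, provided the $U_\ell$ are chosen small enough, $g_{k\ell}$ is defined on a neighborhood of $\overline{U_\ell}$ and satisfies $g_{k\ell}(\overline{U_\ell})\subset U_k$. Thus $(U_k;g_{k\ell})$ forms a GDS satisfying the SSC with respect to $f$, and its limit set $\widetilde\Lambda$ is an $f$-ECR contained in the prescribed neighborhood of $\Lambda$. Every $x\in\Lambda$ has a unique Markov itinerary $(k_0,k_1,\ldots)$ defined by $f^n(x)\in\Lambda_{k_n}$, and the telescoping identity $x=g_{k_0k_1}\circ\cdots\circ g_{k_{n-1}k_n}(f^n(x))$ valid for every $n$ exhibits $x$ as a point of $\widetilde\Lambda$, so $\Lambda\subset\widetilde\Lambda$.

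The main obstacle is the construction of the Markov partition: extracting from an arbitrary finite clopen cover a refinement on which $f$ acts in a Markov fashion. This is classical for uniformly expanding Cantor repellers, and in this one-dimensional conformal setting the total disconnectedness of $\Lambda$ conveniently avoids the boundary-matching difficulties typical of higher-dimensional Markov partitions; nevertheless, it still requires the careful iterated refinement procedure sketched above, together with the adapted-metric device that forces the needed one-step contraction of the inverse branches.
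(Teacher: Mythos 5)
Your argument reaches the right conclusion but by a noticeably heavier route than the paper's. You build a Markov partition of $\Lambda$ into small clopen pieces, enclose the pieces in disjoint disks $U_k$, and realize the admissible transitions as inverse branches; the paper instead passes directly to an adapted metric in which every inverse branch near $\Lambda$ contracts by a factor $\lambda^{-1}$, takes as the $U_k$ the finitely many connected components of a small metric neighborhood $B(\Lambda,r)$ (finiteness and disjointness of closures coming from total disconnectedness of $\Lambda$ for $r$ small), and keeps exactly those branches $g$ of $f^{-1}$ on the $U_k$ whose images meet $\Lambda$. Contraction then forces $g(\overline{U_k})\subset U_\ell$ for a single component $U_\ell$, and the GDS with SSC is already in hand. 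The key observation you miss is that the notion of GDS used here does \emph{not} require the Markov property $f(\Lambda_k)=\bigcup\Lambda_\ell$: condition (ii) only asks for branches of $f^{-1}$ carrying $\overline{U_\ell}$ into $U_k$, and $\Lambda$ lies in the limit set as soon as every transition actually realized by a point of $\Lambda$ is included --- which is exactly your telescoping-itinerary argument, and it nowhere uses Markovianity. Consequently the step you yourself identify as the main obstacle (producing a genuine Markov refinement by ``iterated common refinements ... together with a standard regrouping'', which as stated is only a sketch and would need real work, since a common refinement of a clopen partition with its preimages is not automatically Markov) can simply be deleted: keep your clopen cover, your enclosing disks, and your contraction-plus-distortion verification that $g_{k\ell}(\overline{U_\ell})\subset U_k$, and the proof closes. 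With that simplification your argument and the paper's are essentially the same; as written, yours trades the paper's two-line topological step for an unproved Markov-partition lemma it does not need.
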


\begin{proof}
We can multiply the standard sphere Riemann metric by a positive
smooth function such that with respect to this new metric
$\rho_\Lambda$ we have $\lvert f'\rvert\ge\lambda>1$ on $\Lambda$.
It is easy to show that one can find an arbitrarily small number
$r>0$ such that  the neighborhood
$B(\Lambda,r)=\{z\in \overline \bC\colon \rho_\Lambda
(z,\Lambda)<r\}$ consists of a finite number of connected open
domains $U_k$ with pairwise disjoint closures. We account for our
GDS the branches of $g=f^{-1}$ on the sets $U_k$ such that each
$g(U_k)$ intersects $\Lambda$. Then $g$ maps each $\overline{U_k}$ into
some $U_\ell$ because it is a contraction (by the factor
$\lambda^{-1}$). Hence the family of maps $g|_{U_k}$ satisfies the assumptions of a GDS with  the SSC.
\end{proof}

In the proof of the following lemma we will ``build bridges'' between two ECR's.

\begin{lemma} \label{bridge}
For any  two disjoint $f$-ECR sets $\Lambda_1$, $\Lambda_2 \subset J$  that
both contain non-immediately postcritical points there exists an
$f$-ECR set $\Lambda\subset J$ containing the set $\Lambda_1\cup \Lambda_2$. If $f$
is topologically transitive on each $\Lambda_i$, $i=1$, $2$, then $f$ is
topologically transitive on $\Lambda$.
\end{lemma}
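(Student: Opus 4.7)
The plan is to enlarge the disjoint union of the GDSs for $\Lambda_1$ and $\Lambda_2$ by gluing in two ``bridges'', each a composition of $f^{-1}$-branches, going from $\Lambda_1$ to $\Lambda_2$ and back. First I would apply Lemma~\ref{lem:1111} to replace each GDS by one whose domains $\cU_i=\{U_k^{(i)}\}$ lie in an arbitrarily small neighborhood of $\Lambda_i$, so that the two collections are disjoint using $\Lambda_1\cap\Lambda_2=\emptyset$.

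For the bridge from $\Lambda_2$ to $\Lambda_1$, I would pick a non-immediately postcritical $p_1\in\Lambda_1\cap U_{k_1}^{(1)}$ together with a dense preimage branch $(y_n)_{n\ge 0}$ in $J\setminus\Crit$ with $y_0=p_1$, fix $q_2\in\Lambda_2\cap U_{\ell_1}^{(2)}$, and choose $N$ with $y_N$ very close to $q_2$. Writing $g^{(i)}$ for the branch of $f^{-1}$ carrying $y_i$ to $y_{i-1}$, the composition $g_{\rm br}\eqdef g^{(1)}\circ\cdots\circ g^{(N)}$ is a branch of $f^{-N}$ defined on a neighborhood of $y_N$; since the orbit avoids $\Crit$, Koebe yields bounded distortion on a definite disk there, and by shrinking $U_{\ell_1}^{(2)}$ I can force $U_{\ell_1}^{(2)}$ into this disk and $g_{\rm br}\bigl(\overline{U_{\ell_1}^{(2)}}\bigr)\subset U_{k_1}^{(1)}$. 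To write $g_{\rm br}$ as $N$ individual GDS edges, add intermediate vertices
\[
W_i\eqdef g^{(i+1)}\circ\cdots\circ g^{(N)}\bigl(U_{\ell_1}^{(2)}\bigr),\qquad 1\le i\le N-1,
\]
small open neighborhoods of $y_i$, linked by edges $g^{(i)}\colon\overline{W_i}\to W_{i-1}$ (with the conventions $W_0\eqdef U_{k_1}^{(1)}$ and $W_N\eqdef U_{\ell_1}^{(2)}$). An analogous second bridge from $\Lambda_1$ to $\Lambda_2$, starting from a non-immediately postcritical $p_2\in\Lambda_2$, yields further vertices $W_j'$ and edges.

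The enlarged GDS then has vertex family $\cU_1\cup\cU_2\cup\{W_i\}\cup\{W_j'\}$ and edge family $G_1\cup G_2$ together with the new bridge edges. Assuming SSC holds (addressed below), its limit set $\Lambda$ is an $f$-ECR containing $\Lambda_1\cup\Lambda_2$; bounded distortion is inherited from the original GDSs plus Koebe on the bridges. For the transitivity statement, topological transitivity of $f$ on each $\Lambda_i$ translates to strong connectedness of each $\Gamma_i$; the two oppositely directed bridges then make the combined graph strongly connected, so $f|_\Lambda$ is topologically transitive.

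The main technical obstacle will be verifying the strong separation condition, i.e.\ pairwise disjointness of the closures of all vertices in the enlarged collection. Disjointness among $\cU_1$, among $\cU_2$, and between them comes from the preliminary shrinking. The delicate point is that an intermediate point $y_i$ could lie in $\Lambda_1\cup\Lambda_2$, forcing $W_i$ to overlap an existing $U$-domain. I would circumvent this by exploiting the flexibility of preimage branches: $\Lambda_1\cup\Lambda_2$ is a closed, nowhere dense subset of $J$ and $f$ has $d\ge 2$ preimages at every regular point, so the branch may be perturbed to ensure $y_i\notin\Lambda_1\cup\Lambda_2$ for $1\le i\le N-1$, while density is needed only to place $y_N$ near $q_2$ and is easily restored by taking $N$ larger after the modification. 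With the $y_i$ outside $\Lambda_1\cup\Lambda_2$ and mutually distinct, the $W_i$ (and $W_j'$) become pairwise disjoint small disks after shrinking the starting domain $U_{\ell_1}^{(2)}$ (respectively $U_{k_1'}^{(1)}$) sufficiently, which finishes the verification.
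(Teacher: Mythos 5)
Your construction is the same as the paper's: refine the two GDSs so their domains form small disjoint neighborhoods of $\Lambda_1$ and $\Lambda_2$, run a bridge along a dense backward orbit of a non-immediately postcritical point in each set until it lands in the other set's domain, decompose the bridge into single-step edges supported on small disks around the intermediate orbit points, and deduce transitivity from strong connectedness of the combined graph. One cosmetic remark first: since $f(y_i)=y_{i-1}$, your $g^{(i)}$ (sending $y_i$ to $y_{i-1}$) is a restriction of $f$, not a branch of $f^{-1}$, and $g_{\rm br}$ is $f^N$ near $y_N$ rather than a branch of $f^{-N}$. The GDS edges must be the inverse branches $f^{-1}_{y_i}$ mapping a slightly thickened $\overline{W_{i-1}}$ into $W_i$, so the natural choice of $W_i$ is a thickened pullback of a small disk around $p_1$ rather than a push-forward of $U_{\ell_1}^{(2)}$; this is only a matter of reversing all arrows consistently, plus the slight thickening needed to turn ``maps onto the closure'' into ``maps into the open set''.

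The substantive gap is in your SSC verification. Arranging $y_i\notin\Lambda_1\cup\Lambda_2$ and shrinking the starting domain of the bridge makes the $W_i$ small and mutually disjoint, but the SSC also requires $\overline{W_i}$ to be disjoint from every $\overline{U_k^{(1)}}$ and $\overline{U_\ell^{(2)}}$, and no amount of shrinking $W_i$ achieves this if some intermediate $y_i$ lies inside one of the \emph{open} domains $U_k^{(j)}$ --- which is perfectly possible even when $y_i\notin\Lambda_1\cup\Lambda_2$, since the $U$'s are open neighborhoods of pieces of the $\Lambda$'s. The paper's proof imposes the stronger condition that the intermediate bridge points avoid $\overline{D_1}\cup\overline{D_2}$ (the union of all domain closures) outright, and then chooses the starting disk $V_i$ so small that the pullbacks along the bridge stay clear of $\overline{D_1}\cup\overline{D_2}$ as well. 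In your order of operations the repair is: after fixing the finite set $\{y_1,\ldots,y_{N-1}\}$ off the compact set $\Lambda_1\cup\Lambda_2$, refine both GDSs once more (replace $\cU_j$ by $G_j^m(\cU_j)$ for $m$ large) so that their domains lie in a neighborhood of $\Lambda_1\cup\Lambda_2$ missing all the $y_i$'s, and only then choose the $W_i$. With that extra refinement your argument closes.
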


\begin{proof}
Let $\Lambda_1$, $\Lambda_2 \subset J$ be two sets satisfying the assumptions
of the lemma and let $p_1\in \Lambda_1$ and $p_2\in \Lambda_2$ be two
non-immediately postcritical points.
Consider a family ${\cU}_i=\{U_{i,k}\}_{k=1}^{K_i}$ of open
connected domains and a family
$G_i=\{g_{i,k\ell}\}$ of branches of  $f^{-1}$ mapping
$U_{i,\ell}$ to $U_{i,k}$ that define the DGS's satisfying the SSC
that have $\Lambda_i$ as their limit sets, for $i=1$, $2$, respectively.
Let
\[
D_i\eqdef\bigcup_{k=1}^{K_i} U_{i,k}.
\]
We can assume that each $D_i$ is an arbitrarily small neighborhood
of $\Lambda_i$, by replacing ${\cU}_i$ by $G^m(\cU_i)$, where by $G^m$
we denote the family of all compositions
\[
G^m=\left\{g_{i,k_1k_2}\circ g_{i,k_2k_3}\circ ... \circ g_{i,k_{m-1}k_m}\colon
g_{i,k_nk_{n+1}}\in G_i, n=1,\ldots,m-1\right\}.
\]
For each $i=1$, $2$ let us choose a backward trajectory $y_{i,t}$ of the point $p_i$
(the ``bridge'') such that 
\[
y_{i,0}=p_i, \quad
f(y_{i,t})=y_{i,t-1} \text{ for  }t=1,...,t_i,
\]
$y_{i,t}\notin \overline{ D_1 }\cup \overline{ D_2}$ for
all $t=1$, $\ldots$, $t_i-1$ and  $y_{1,t_1}\in D_2$, $y_{2,t_2}\in D_1$.
Let us denote by $h_{i,t}$ the branch of $f^{-t}$ that maps $p_i$ to  $y_{i,t}$, that is,
let
\[
h_{i,t} \eqdef f^{-t}_{y_{i,t}}.
\]
Let $V_i$ be an open disc centered at $p_i$ that is contained in $D_i$ and satisfies
$h_{i,t}(\overline{V_i})\cap (\overline{D_1}\cup\overline{D_2})=\emptyset$ for all $t=1$,
$\ldots$, $t_i-1$ (note that this
is possible provided we choose $V_i$ small enough) and
\[
h_{1,t_1}(\overline{V_1})\subset D_2
\text{ and }
h_{2,t_2}(\overline{V_2})\subset D_1.
\]

Let us consider an integer $N\ge 0$ such that the component of $f^{-N}(D_i)$ containing
$p_i$ is contained in $V_i$, that is, that
\[
f^{-N}_{p_i}(D_i)\subset V_i
\]
for $i=1$, $2$. Now let us replace ${\cU}_i$ by
$\widehat{\cU}_i\eqdef G^N({\cU}_i)$, let us replace each map
$g_{i,k\ell}$ by the family of its restrictions to $\widehat U\in
\widehat{\cU}_i$ contained in $U_\ell$ and and let us denote by
$\widehat G_i$ the union of those families. This defines a GDS
with graph $\Gamma_i=\Gamma_i(\widehat{\cU}_i, \widehat G_i)$, for
$i=1$, $2$. Now we restrict each bridge $h_{i,t}$ to the
element $\widehat V_i$ of $\widehat {\cU}_i$ that contains $p_i$.
As the next step we consider
\[
\widehat V_{i,t}\eqdef h_{i,t}(\widehat V_i) \quad\text{ for } t=1, ..., t_i+N-1,
\] where
for $t>t_i$ we choose an arbitrary prolongation of the bridge $y_{i,t}$
by maps $g_{j,k_tk_{t+1}}$. Finally, we consecutively thicken slightly $\widehat V_{i,t}$
along the bridges such that $f(\widehat V_{i,t})\supset \overline{\widehat V_{i,t-1}}$.
For each $t$ let us denote by $g_{i,t}$ the branch of $f^{-1}$
from $\widehat V_{i,t}$ to $\widehat V_{i,t+1}$ for $t=0$, $...$, $t_i+N-1$.
Let us denote by $H_i$ the family of all these branches.
By construction the family of domains
\[
\cU\eqdef
\widehat {\cU}_1 \cup \widehat {\cU}_2\cup\bigcup_{i=1,2}\bigcup_{t=1,...,t_i+N-1}
\widehat V_{i,t}
\]
and the family of maps $G\eqdef \widehat G_1\cup \widehat G_2 \cup H_1\cup H_2$ form our
desired GDS with a graph $\Gamma=\Gamma(\cU,G)$ satisfying the SSC and hence defines an $f$-ECR set $\Lambda\subset J$ that containes $\Lambda_1\cup\Lambda_2$.

Finally notice that  this system has topologically transitive limit set $\Lambda$ since its
transition
graph $\Gamma$ is transitive. This follows from the assumption that due to
topological transitivity of $f|_{\Lambda_i}$ the graphs $\Gamma_i$ are transitive
and from the construction of  the bridges.
\end{proof}

\subsection{Hyperbolic subsystems and approximation of pressure}\label{sec:hypsubpress}

Our approach is to ``exhaust'' the Julia set $J$ by some family of subsets $\Lambda_m\subset J$ and to show that the corresponding pressure functions converge towards the pressure of $f|_J$.
 In particular, in order to be able to conclude convergence of associated spectral quantities, it is crucial that each such $\Lambda_m$ is an invariant uniformly expanding and topologically transitive set.

We start by stating a classical result from Pesin-Katok theory. In
follows for example from~\cite[Theorems 10.6.1 and 11.2.3]{PUbook}.
Recall that an  iterated function system (IFS) is a GDS that is given by a complete graph.

\begin{lemma}\label{l0}
For every ergodic $f$-invariant measure $\mu$ that is supported on
$J$ and has a positive Lyapunov exponent, for every continuous
function $\phi\colon J\to \bR$
 and for every $\varepsilon>0$,
there exist an integer $n>0$ and an $f^n$-ECR set $\Lambda\subset
J$ that is topologically transitive and a limit set of an IFS,
such that
$\dim_{\rm H}\Lambda\ge \dim_{\rm H}\mu-\varepsilon$
\begin{equation}\label{pkeq}
P_{f^n|\Lambda}(S_n\phi)\ge h_\mu(f^n)+ n \int \phi\,d\mu -n
\varepsilon,
\end{equation}
where we use the notation $S_n\phi(x) \eqdef
\phi(x)+\phi(f(x))+\ldots+\phi(f^{n-1}(x))$, and in particular
\begin{equation} \label{pkeq2}
P_{f|\bigcup_{k=0}^{n-1} f^k(\Lambda)}(\phi)\ge h_\mu(f)+  \int \phi\,d\mu -
\varepsilon.
\end{equation}
\end{lemma}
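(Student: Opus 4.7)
The plan is to implement the classical Katok horseshoe construction in the conformal holomorphic setting, where Pesin theory collapses to careful use of the Koebe distortion theorem along inverse branches; this is essentially the content of the two theorems of~\cite{PUbook} that are invoked. Since $\chi(\mu)>0$ and, by ergodicity, $\mu$-typical orbits stay a definite distance from $\Crit$ most of the time, inverse branches of $f^n$ along such orbits are uniform contractions on a ball of definite radius with Koebe-bounded distortion. The target is to assemble such branches into an IFS whose limit set is the desired $f^n$-ECR and whose natural Bernoulli measure recovers $h_\mu(f)$, $\chi(\mu)$, and $\int\phi\,d\mu$ up to $\varepsilon$.

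Fix $\varepsilon>0$. I would first apply the Birkhoff ergodic theorem, the Brin--Katok entropy formula, and a Pesin--Koebe regularity argument (combined via Egorov) to obtain a compact set $R\subset J$ with $\mu(R)\ge 1/2$, a radius $r_0>0$, and an integer $N_0$ such that for every $x\in R$ and every $n\ge N_0$ the following hold simultaneously: the Birkhoff averages $\tfrac{1}{n}S_n\log\lvert f'\rvert(x)$ and $\tfrac{1}{n}S_n\phi(x)$ are $\varepsilon$-close to $\chi(\mu)$ and $\int\phi\,d\mu$; the inverse branch $f^{-n}_x$ is defined on $B(f^n(x),r_0)$ with Koebe distortion at most $2$ and derivative of modulus comparable to $e^{-n\chi(\mu)}$; and the Bowen ball satisfies $\mu(B_n(x,r_0/4))\le e^{-n(h_\mu(f)-\varepsilon)}$. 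A maximal $(n,r_0/4)$-separated subset $\{x_1,\ldots,x_{N_n}\}$ of $R$ then has cardinality $N_n\ge e^{n(h_\mu(f)-2\varepsilon)}$.

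Next I would fix a generic point $p\in R\cap\supp\mu$ and a small topological disc $D$ around $p$, and use density of the orbit of $p$ in $\supp\mu$ to prepend a short orbit from $p$ to $x_i$ and append a short orbit from $f^n(x_i)$ back to $p$ to each separated length-$n$ orbit. After equalizing total lengths to a common $n'=n+O(1)$ and mild thinning, this yields branches $g_i$ of $f^{-n'}$, each sending $\overline D$ strictly into $D$, with individual contraction of modulus comparable to $e^{-n'\chi(\mu)}$ and with $M:=\#\{g_i\}\ge e^{n'(h_\mu(f)-3\varepsilon)}$. Because the base orbits are $(n,r_0/4)$-separated while the images $g_i(\overline D)$ shrink exponentially, these images are pairwise disjoint once $n'$ is large enough. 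Hence $\{g_i\}$ is an IFS satisfying the SSC with respect to $f^{n'}$; its limit set $\Lambda$ is an $f^{n'}$-ECR and is topologically mixing since the underlying transition graph is complete.

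Setting $n:=n'$, estimate~\eqref{pkeq} follows from the variational principle applied to the Bernoulli measure of maximal entropy of this IFS: one has $P_{f^n|\Lambda}(S_n\phi)\ge\log M + (S_n\phi)(y_i) - O(n\varepsilon)\ge n(h_\mu(f)-3\varepsilon)+n\int\phi\,d\mu-O(n\varepsilon)$ for any $y_i\in g_i(\overline D)$, using bounded distortion; after rearranging and renaming $\varepsilon$ one gets~\eqref{pkeq}. The dimension bound follows from Bowen's formula applied to the IFS, combined with the Ma\~n\'e formula $\dim_{\rm H}\mu=h_\mu(f)/\chi(\mu)$: the Bowen exponent of $\Lambda$ is at least $(h_\mu(f)-3\varepsilon)/(\chi(\mu)+\varepsilon)\ge\dim_{\rm H}\mu-O(\varepsilon)$. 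Inequality~\eqref{pkeq2} follows from the standard identity $P_{f^n|\Lambda}(S_n\phi)=n\cdot P_{f|\bigcup_{k=0}^{n-1}f^k(\Lambda)}(\phi)$ and division by $n$. The principal technical obstacle is balancing the exponential branch count against the SSC and the complete-graph requirement for an IFS; this is resolved by choosing $n$ large so that branch images are geometrically negligible compared to the separation scale, and by anchoring all branches in a single disc $D$ around the generic point $p$, which simultaneously forces completeness of the transition graph and guarantees topological transitivity of $f^n|_\Lambda$.
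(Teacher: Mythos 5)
Your proposal is essentially correct, but it is worth noting that the paper does not prove this lemma at all: it is stated as ``a classical result from Pesin--Katok theory'' and attributed to \cite[Theorems 10.6.1 and 11.2.3]{PUbook}. What you have written is, in outline, precisely the content of those cited theorems -- the Katok horseshoe construction in the conformal setting, where Pesin charts are replaced by Koebe distortion along inverse branches. So you are supplying a proof sketch where the authors supply a reference; the two are not in conflict. Two places in your sketch carry essentially all of the technical weight and deserve to be flagged. First, the existence of the uniform radius $r_0$ such that $f^{-n}_x$ extends univalently to $B(f^n(x),r_0)$ with bounded distortion for all $x$ in a positive-measure set and all large $n$ is exactly where $\chi(\mu)>0$ is used (via integrability of $\log\,\lvert f'\rvert$ and the fact that typical orbits approach $\Crit$ subexponentially); ``Pesin--Koebe regularity combined via Egorov'' names the right mechanism but is the crux, not a routine step. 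Second, your bridging step prepends an orbit from $p$ to each $x_i$; since there are exponentially many separated points spread over $R$, you must ensure the bridge lengths and the distortion along them are bounded uniformly in $i$. The cleaner (and standard) fix is to choose the $(n,r_0/4)$-separated set inside $R\cap D$ with the additional Birkhoff condition $f^n(x_i)\in D$, so that only the short return bridge into $D$ (of length bounded by the topological exactness constant $m(D)$ with $f^{m(D)}(D\cap J)=J$) is needed; this costs only a factor $\mu(R\cap D)$ in the branch count, which is absorbed into $e^{-n\varepsilon}$. With these points made precise, your pressure, dimension, and transitivity conclusions, as well as the passage from \eqref{pkeq} to \eqref{pkeq2} by spreading an $f^n$-invariant measure over $\bigcup_{k=0}^{n-1}f^k(\Lambda)$, are all sound.
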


Our aim is to apply Lemma~\ref{l0} to potentials $\phi=\varphi_d$
and to use the resulting ECR sets to construct a sequence of
$f^{a_m}$-ECR sets $\Lambda_m$ on which the pressure function
$\frac{1}{a_m}P_{f^{a_m}|\Lambda_m}(S_{a_m}\varphi_d)$ converges pointwise to
$P(\varphi_d)$. If the ECR sets generated by Lemma~\ref{l0} are
pairwise disjoint, we can simply build bridges between such sets
applying Lemma~\ref{bridge}. In the general situation we start
with the following lemma.

\begin{lemma} \label{bridge2}
Let $\Lambda$ be a topologically transitive $f^n$-ECR set. Let
$\phi_i\colon\Lambda\to\bR$ be a finite number of  continuous functions. Then for any open
disc $D$ intersecting $\Lambda$ and for any $\varepsilon>0$ one
can find a set $\Lambda'\subset D\cap \Lambda$ and a natural
number $\ell>0$ such that $\Lambda'$ is an $f^\ell$-ECR set and for every $\phi_i$
satisfies
\[
\frac{1}{\ell} P_{f^\ell|\Lambda'}(S_\ell\phi_i)
\geq \frac{1}{n}P_{f^n|\Lambda}(S_n\phi_i) - \varepsilon.
\]
\end{lemma}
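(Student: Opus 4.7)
The plan is to build, inside $D$, a single-state $f^\ell$-IFS whose partition sums approximate those of the original GDS defining $\Lambda$, and then to read off the pressure inequality. Let $(\cU,G)$ be the GDS defining the $f^n$-ECR $\Lambda$ and $\Gamma$ its transition graph, which is irreducible by topological transitivity of $f^n|_\Lambda$. Uniform expansion of $f^n$ on $\Lambda$ implies that the diameters of the length-$M$ cylinders $g_w(\overline{U_{k_M}})$, taken over all admissible words $w=(k_0,\ldots,k_M)$, shrink uniformly as $M\to\infty$; choose $M$ large enough that every such cylinder has diameter less than $\dist(\Lambda\cap D,\overline{\bC}\setminus D)$. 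Since $\Lambda\cap D\neq\emptyset$, some admissible word $w^*=(k_0^*,\ldots,k_M^*)$ then satisfies $C\eqdef g_{w^*}(U_{k_M^*})\subset D$.

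For a large integer $L$, let $\cW_L$ be the set of admissible length-$L$ words in $\Gamma$ from $k_M^*$ to $k_0^*$; by irreducibility of $\Gamma$, $\cW_L$ is non-empty and grows exponentially in $L$. For $v\in\cW_L$, set $\tilde g_v\eqdef g_{w^*}\circ g_v$: this is a branch of $f^{-\ell}$ with $\ell\eqdef n(L+M)$, mapping $\overline{U_{k_0^*}}\supset\overline C$ into $C$. Distinct $\tilde g_v$ have images in disjoint original cylinders, hence pairwise disjoint closures, so $\{\tilde g_v\}_{v\in\cW_L}$ is an $f^\ell$-IFS on $C$ satisfying the SSC. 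Its limit set $\Lambda'\subset D\cap\Lambda$ is therefore a topologically transitive $f^\ell$-ECR (single-state IFS).

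For the pressure estimate, bounded distortion (uniform on $\Lambda$ by uniform expansion and joint uniform continuity of the finite family $\{\phi_i\}$) together with the single-state structure of the IFS yields
\[
P_{f^\ell|\Lambda'}(S_\ell\phi_i)\geq \log\sum_{v\in\cW_L}\exp S_{nL}\phi_i(\pi_v) + S_{nM}\phi_i(x_{w^*}) - O(1),
\]
where $\pi_v\in g_v(\overline{U_{k_0^*}})$ is any representative and the $O(1)$ error is uniform in $L$ and in $i$. By irreducibility of $\Gamma$ any admissible length-$L$ word can be converted into an element of $\cW_L$ by boundary corrections of length at most $\diam\Gamma$ (prepending a path from $k_M^*$ to its start and appending a path from its end to $k_0^*$, padded with a fixed loop), which alters Birkhoff sums by a uniformly bounded amount; combined with $\frac{1}{L}\log\sum_{|w|=L}\exp S_{nL}\phi_i(x_w)\to P_{f^n|\Lambda}(S_n\phi_i)$, this gives
\[
\frac{1}{L}\log\sum_{v\in\cW_L}\exp S_{nL}\phi_i(\pi_v)\geq P_{f^n|\Lambda}(S_n\phi_i)-n\varepsilon/2
\]
for all $L$ sufficiently large, simultaneously in $i$. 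Dividing the lower bound on $P_{f^\ell|\Lambda'}(S_\ell\phi_i)$ by $\ell=n(L+M)$ and absorbing the prefix contribution $S_{nM}\phi_i(x_{w^*})/\ell$ and the $O(1)$ correction into an $O(1/L)$ term yields
\[
\frac{1}{\ell}P_{f^\ell|\Lambda'}(S_\ell\phi_i)\geq \frac{1}{n}P_{f^n|\Lambda}(S_n\phi_i)-\varepsilon
\]
once $L$ is taken sufficiently large.

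The main obstacle is the uniform partition-sum approximation in the last step: one must verify that restricting admissible length-$L$ words in $\Gamma$ to $\cW_L$ loses only a uniformly bounded multiplicative constant in the partition sum (using irreducibility of $\Gamma$ and uniform boundedness of Birkhoff sums over bounded-length words), and that the overhead from the fixed prefix $w^*$ contributes only $O(1/L)$ to the per-step pressure. Uniformity over the finite family $\{\phi_i\}$ is automatic from their joint uniform continuity on the compact set $\Lambda$.
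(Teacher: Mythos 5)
Your proof is correct and follows essentially the same route as the paper's: both induce an $f^\ell$-IFS on a small piece of $\Lambda$ inside $D$ with a long return time $\ell$ and show that the per-iterate pressure loss is $O(1/\ell)$, the only difference being that you carry out the pressure comparison by hand via partition sums of the underlying subshift of finite type, where the paper instead invokes the tree-pressure characterization from \cite{PrzRivSmi:04}. One small repair: if the transition graph of $\Lambda$ is periodic, then $\cW_L$ is empty for $L$ outside an arithmetic progression, so you must fix $L$ in the appropriate residue class, and correspondingly your conversion of arbitrary admissible length-$L$ words into $\cW_L$ should really produce elements of $\cW_{L'}$ for some $L'\in[L,L+O(1)]$ (with bounded multiplicity), which still yields the required lower bound on the partition sum.
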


\begin{proof}
Consider in $\Lambda$ a clopen set $C$ contained in $D$. Let $x\in
C$. Let us choose $N$ large enough such that in the case that $\ell\geq N$ and
$f_x^{-\ell}(C)\cap C \neq \emptyset$ the pullback satisfies
 $f_x^{-\ell}(C)\subset C$. Note that it is enough to take
\[
N>\frac{\log \diam \Lambda}{\rho_\Lambda (C, \Lambda\setminus
C)\log\lambda},
\]
where $\lambda$ is the expanding constant on $\Lambda$ in the
metric $\rho_\Lambda$ defined as in the proof of Lemma~\ref{lem:1111}. Note that
the topological transitivity of $f^n|_{\Lambda}$ implies that
every pullback of $C$ can be continued by a bounded number of
consecutive pullbacks until it hits  $C$, say this number is
bounded by a constant $N'$. This way we obtain an IFS for
$f^\ell$, where $N\le \ell\le N+N'$, with its limit set $\Lambda'$
contained in $C$.

Recall the equivalent definition of tree pressure established in~\cite[Theorems A, A.4]{PrzRivSmi:04}.
Due to topological transitivity in the definition of pressure we can consider separated sets  that are contained in the set of preimages $f^{-N}(x)$. Therefore, for any $\phi_i$ the pressures with respect to $f|_{\Lambda'}$ and with respect to $f|_{\Lambda}$ differ by at most $O(\frac {N'} N)$ from each other. As $N$
can be chosen arbitrarily big, this proves the lemma.
\end{proof}

The following approximation results are fundamental for our approach.

\begin{proposition}\label{decaf}
Assume that $f$ is non-exceptional or that $f$ is exceptional but
$\Sigma\cap J=\emptyset$. Then there exists a sequence $\{a_m\}_m$ of positive integers and a sequence $\Lambda_m\subset J$ of $f^{a_m}$-invariant uniformly expanding
topologically transitive sets such that for every $d\in\bR$, we
have
\begin{equation}\label{micha}
    P(\varphi_d)=\lim_{m\to\infty} \frac 1 {a_m} P_{f^{a_m}|\Lambda_m}(S_{a_m}\varphi_d)
    = \sup_{m\ge 1} \frac 1 {a_m} P_{f^{a_m}|\Lambda_m}(S_{a_m}\varphi_d).
\end{equation}
For every $\alpha\in (\alpha^-, \alpha^+)$ we have
\begin{equation}\label{fcon}
     F(\alpha)
    = \lim_{m\to\infty}F_m(\alpha)=     \sup_{m\ge 1}F_m(\alpha)
\end{equation}
and
\begin{equation}\label{alla}
    \lim_{m\to \infty} \alpha_m^- = \inf_{m\ge 1}\alpha_m^-=\alpha^- ,
\quad
\lim_{m\to \infty} \alpha_m^+ = \sup_{m\ge 1}\alpha_m^+=\alpha^+,
\end{equation}
where $F_m$ and $\alpha_m^\pm$ are defined as in~\eqref{def:Fa}
and~\eqref{yu} but with  $\frac{ 1 }{a_m}
P_{f^{a_m}|\Lambda_m}(S_{a_m}\phi)$ instead of $P_{f|J}(\phi)$.
\end{proposition}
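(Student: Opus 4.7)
The plan is to build $\Lambda_m$ by gluing together several ergodic-measure approximators of the pressure at a dense set of parameters. Fix a countable dense subset $\{d_i\}_{i\geq 1}$ of $\bR$ containing $0$. At stage $m$, for each $i \leq m$ the variational principle~\eqref{Phyp} supplies some $\mu_i\in\cM_{\rm E}^+$ with
\[
h_{\mu_i}(f)+\int\varphi_{d_i}\,d\mu_i \geq P(\varphi_{d_i})-1/(3m),
\]
and Lemma~\ref{l0} then produces an $f^{n_i}$-ECR $\Gamma_i\subset J$, topologically transitive as the limit set of an IFS, whose rescaled pressure $\frac{1}{n_i}P_{f^{n_i}|\Gamma_i}(S_{n_i}\varphi_{d_i})$ exceeds $P(\varphi_{d_i})-2/(3m)$. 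Composing GDS maps, each $f^{n_i}$-ECR is automatically an $f^{kn_i}$-ECR, so all $\Gamma_i$ can be viewed as $f^{b_m}$-ECRs with $b_m=\mathrm{lcm}(n_1,\ldots,n_m)$.

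Next, choose pairwise disjoint open discs $D_1,\ldots,D_m$ with $D_i\cap\Gamma_i\neq\emptyset$ and apply Lemma~\ref{bridge2} simultaneously to the finite family of potentials $\{\varphi_{d_j}\}_{j\leq m}$, obtaining $f^{\ell_i}$-ECRs $\Gamma_i'\subset D_i$ that are pairwise disjoint and satisfy the required pressure lower bound at $\varphi_{d_i}$ up to another $1/(3m)$ loss. After realizing each $\Gamma_i'$ as an $f^{a_m}$-ECR with $a_m=\mathrm{lcm}(\ell_1,\ldots,\ell_m)$, the assumption that $f$ is non-exceptional or $\Sigma\cap J=\emptyset$ guarantees that each $\Gamma_i'$ contains non-immediately postcritical points, so iterating Lemma~\ref{bridge} (applied to the iterate $f^{a_m}$, by the same proof) yields a topologically transitive $f^{a_m}$-ECR $\Lambda_m\supseteq\bigcup_i\Gamma_i'$. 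Monotonicity of pressure on invariant subsets then gives $\frac{1}{a_m}P_{f^{a_m}|\Lambda_m}(S_{a_m}\varphi_{d_i})\geq P(\varphi_{d_i})-1/m$ for every $i\leq m$. Together with the trivial upper bound $\frac{1}{a_m}P_{f^{a_m}|\Lambda_m}(S_{a_m}\varphi_d)\leq P(\varphi_d)$ (because $\Lambda_m\subset J$), this establishes pointwise convergence at every $d_i$; since both sides of~\eqref{micha} are convex in $d$ and Lipschitz with uniform constant $\alpha^+$ (from $|h_\mu-d\chi(\mu)-(h_\mu-d'\chi(\mu))|\leq|d-d'|\alpha^+$), the convergence extends to all $d\in\bR$ by density of $\{d_i\}$, proving~\eqref{micha}.

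The remaining parts~\eqref{alla} and~\eqref{fcon} follow from~\eqref{micha} by convex analysis. For~\eqref{alla}, the bounds $\alpha_m^-\geq\alpha^-$ and $\alpha_m^+\leq\alpha^+$ are immediate from $\cM(\bigcup_k f^k\Lambda_m)\subset\cM(J)$; for the reverse inequalities, convexity of $d\mapsto P_m(\varphi_d)$ yields the monotone difference-quotient inequality
\[
\alpha_m^-\leq (P_m(0)-P_m(\varphi_d))/d \quad\text{for all } d>0,
\]
and letting first $m\to\infty$ and then $d\to\infty$ gives $\limsup_m\alpha_m^-\leq\alpha^-$ by~\eqref{micha}; the argument for $\alpha_m^+$ is symmetric with $d\to-\infty$. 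For~\eqref{fcon}, fix $\alpha\in(\alpha^-,\alpha^+)$: by~\eqref{alla}, $\alpha$ lies in the interior of $[\alpha_m^-,\alpha_m^+]$ for all large $m$, so by coercivity the minimizers in $\inf_d(P(\varphi_d)+\alpha d)$ and $\inf_d(P_m(\varphi_d)+\alpha d)$ lie in a common bounded interval. Pointwise convergence of convex $P_m$ to the continuous convex function $P$ upgrades to uniform convergence on compact sets, so the infima converge, and dividing by $\alpha$ yields $F_m(\alpha)\to F(\alpha)$.

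The main technical obstacle is the bookkeeping through the three-stage geometric construction (Lemma~\ref{l0}, Lemma~\ref{bridge2}, Lemma~\ref{bridge}): each stage changes the iteration power, and one must verify simultaneously for the finite family of test potentials $\varphi_{d_j}$ that the pressure estimate survives the shrinking, the power unification via lcm, and the bridge-gluing. Applying Lemma~\ref{bridge} to the iterate $f^{a_m}$ rather than $f$ requires only a routine reading of its geometric proof, and topological transitivity of each $\Gamma_i'$ (which is a full shift as the output of an IFS construction) under the higher iterate $f^{a_m}$ is automatic.
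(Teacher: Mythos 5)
Your construction of $\Lambda_m$ coincides with the paper's: Lemma~\ref{l0} to produce ECR approximants of the pressure for a finite (respectively, dense) family of test potentials, Lemma~\ref{bridge2} to make them pairwise disjoint, passage to a common power of $f$, and consecutive applications of Lemma~\ref{bridge} to glue them into one topologically transitive ECR, with the extension from finitely many parameters $d_i$ to all $d\in\bR$ via the uniform Lipschitz bound on $d\mapsto P(\varphi_d)$. The only divergence is at the end: the paper deduces \eqref{fcon} and \eqref{alla} from \eqref{micha} by citing Wijsman's theorem on convergence of Legendre pairs, whereas you give a direct and correct convex-analysis substitute (monotone difference quotients for $\alpha_m^{\pm}$, locally uniform convergence of convex functions for $F_m$), which is a harmless, self-contained variant.
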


\begin{proof}
To prove \eqref{micha} it is enough to construct an $f^{a_m}$-ECR
set $\Lambda_m\subset J$ such that
\begin{equation} \label{brez}
\frac 1 {a_m} P_{f^{a_m}|\Lambda_m}(S_{a_m}\varphi_d) \geq
P(\varphi_d) - \frac 1 m
\end{equation}
for all $d\in [-m,m]$. Recall that we have~\eqref{Phyp}. As
$d\mapsto P(\varphi_d)$ is uniformly Lip\-schitz continuous, we only need to
check~\eqref{brez} for a finite number of potentials
$\phi_i=\varphi_{d_i}$.
Given $m$, we apply Lemma~\ref{l0} to potentials $\phi_i$,
obtaining a family of $f^{n_i}$-ECR sets $\Lambda_{m,i}$ on which
the pressure $\frac {1}{ n_i}
P_{f^{n_i}|\Lambda_{m,i}}(S_{n_i}\phi_i) \ge P(\phi_i)-\frac{1}{2 m}$.
We then apply Lemma~\ref{bridge2} to construct a family of
pairwise disjoint $f^{\ell_i}$-ECR sets $\Lambda_{m,i}'$ that 
satisfy $\frac 1 {\ell_i} P_{f^{\ell_i}
|\Lambda_{m,i}'}(S_{\ell_i}\phi_i) \ge P(\phi_i)-\frac 1 m$. Those
sets $\Lambda_{m,i}'$ are also disjoint $f^{a_m}$-ECR sets for
$a_m=\prod_i \ell_i$ and by our assumption contain non-immediately
post-critical points. Hence we can consecutively apply
Lemma~\ref{bridge} to them. We obtain an $f^{a_m}$-ECR set
satisfying~\eqref{brez} for all $\phi_i$. This
proves~\eqref{micha}.

As $d\mapsto P(\varphi_d)$ and $\alpha\mapsto\alpha F(\alpha)$
form a Legendre pair,~\eqref{fcon} and~\eqref{alla} follow from
\eqref{micha} by a result of Wijsman~\cite{Wij:66}.
\end{proof}

\begin{remark}
It is enough for us to work with hyperbolic sets for some
iterations of $f$ instead of hyperbolic sets for $f$ (in other
words, to use~\eqref{pkeq} instead of~\eqref{pkeq2}). However, notice that we could
instead also extend the set $\bigcup_{i=0}^{a_m-1} f^i(\Lambda_m)$ to
an $f$-ECR set using Lemma~\ref{lem:1111}.
\end{remark}

\subsection{Conformal measures}\label{sec:conf}

The dynamical properties of any measure $\nu$ with respect to
$f|_J$ are captured through its Jacobian. The \emph{Jacobian} of
$\nu$ with respect to $f|_J$ is the (essentially) unique function
${\rm Jac}_\nu f$ determined through
\begin{equation}\label{conf}
\nu(f(A)) = \int_A{\rm Jac}_\nu f \,d\nu
\end{equation}
for every Borel subset $A$ of $J$ such that $f|_A$ is injective.
In particular, its existence yields the absolute continuity $\nu
\circ T \prec \nu$.

A probability measure $\nu$ that satisfies
\[
{\rm Jac}_\nu f=e^{P(\varphi_d)-\varphi_d}
\]
is called \emph{$e^{P(\varphi_d)-\varphi_d}$-conformal measure}.
If $d\geq 0$ then one can always find a
$e^{P(\varphi_d)-\varphi_d}$-conformal measure $\nu_d$ that is
positive on each open set intersecting $J$, see~\cite{PrzRivSmi:04}. When $d<0$ such a measure can always be found
if $f$ is not an exceptional map or if $f$ is exceptional but
$\Sigma\cap J=\emptyset$ (see~\cite[Appendix A.2]{PrzRivSmi:04}).

\subsection{Hyperbolic times and conical limit points}\label{sec:hypinst}

When $f|_J$ is not uniformly expanding, we can still observe a
slightly weaker form of non-uniform hyperbolicity. We recall two
concepts that have been introduced.

A number $n\in\bN$ is called a \emph{hyperbolic time} for a point
$x$ with exponent $\sigma$ if
\[
\lvert (f^k)'(f^{n-k}(x))\rvert \ge e^{k\sigma} \text{ for every
}1\le k \le n.
\]
It is an immediate consequence of the Pliss lemma (see, for
example,~\cite{Pli:72}) that for a given point $x\in J$, for any
$\sigma<\overline\chi(x)$ there exist infinitely many hyperbolic
times for $x$ with exponent $\sigma$.

We denote by
\[
\dist g|_Z \eqdef \sup_{x,y\in Z}\frac{\lvert g'(x)\rvert}{\lvert
g'(y)\rvert}
\]
the maximal distortion of a map $g$ on a set $Z$.
After~\cite{4aut}, we will call a point $x\in J$ {\it conical} if
there exist a number $r>0$, a sequence of numbers $n_i\nearrow \infty$ and a sequence
$\{U_i\}_i$ of neighborhoods of $x$ such that $f^{n_i}(U_i)\supset
B(f^{n_i}(x),r)$ and that $\dist f^{n_i}|_{U_i}$ is bounded
uniformly in $i$.

\section{On the completeness of the spectrum}\label{sec:3.5}

In the following two lemmas we will investigate which numbers can occur at all as upper/lower Lyapunov exponents.

\begin{lemma}\label{lem:finit1}
We have
\[
\left\{ x\in J\colon \overline\chi(x)>\alpha^+\right\} =\emptyset.
\]
If $J$ does not contain any critical point of $f$ then we have
\[
\left\{x\in J\colon\underline\chi(x)< \alpha^- \right\} =\emptyset.
\]
\end{lemma}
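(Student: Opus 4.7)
The plan is to prove both statements by building, from any alleged counterexample $x\in J$, an $f$-invariant Borel probability measure $\mu\in\cM$ whose Lyapunov exponent $\chi(\mu)=\int\log\,\lvert f'\rvert\,d\mu$ violates the relevant bound, contradicting the variational characterizations $\alpha^+=\sup_{\mu\in\cM}\chi(\mu)$ and $\alpha^-=\inf_{\mu\in\cM}\chi(\mu)$ from~\eqref{yu}. The construction of $\mu$ is, in both cases, a weak-$*$ limit of Birkhoff empirical measures $\mu_{n_i}\eqdef\frac{1}{n_i}\sum_{j=0}^{n_i-1}\delta_{f^j(x)}$; that any such accumulation point lies in $\cM$ is the standard Krylov--Bogolyubov argument, since $f_*\mu_{n_i}-\mu_{n_i}=\frac{1}{n_i}(\delta_{f^{n_i}(x)}-\delta_x)$ has total variation at most $2/n_i$.

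For the first claim, suppose $x\in J$ satisfies $\overline\chi(x)>\alpha^+$, and fix $\sigma$ with $\alpha^+<\sigma<\overline\chi(x)$. In particular $\overline\chi(x)>0$, so no forward iterate of $x$ can be critical (otherwise $\lvert(f^n)'(x)\rvert=0$ eventually and $\overline\chi(x)=-\infty$). By the Pliss lemma recalled in Section~\ref{sec:hypinst} there are infinitely many hyperbolic times $n_i\to\infty$ for $x$ with exponent $\sigma$; taking $k=n_i$ in the definition yields
\[
\int\log\,\lvert f'\rvert\,d\mu_{n_i}
=\frac{1}{n_i}\log\,\lvert (f^{n_i})'(x)\rvert\ge \sigma.
\]
Pass to a weak-$*$ accumulation point $\mu\in\cM$. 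Since $\log\,\lvert f'\rvert$ is only upper semi-continuous (equal to $-\infty$ on $\Crit$), I truncate from below by $\log\,\lvert f'\rvert_M\eqdef\max(\log\,\lvert f'\rvert,-M)$, which is continuous and bounded on $J$ and satisfies $\log\,\lvert f'\rvert_M\ge\log\,\lvert f'\rvert$. Weak-$*$ convergence gives $\int\log\,\lvert f'\rvert_M\,d\mu\ge\sigma$ for every $M$, and letting $M\to\infty$ the monotone convergence theorem (applied to $C-\log\,\lvert f'\rvert_M\nearrow C-\log\,\lvert f'\rvert$, where $C\eqdef\sup_J\log\,\lvert f'\rvert<\infty$) yields $\chi(\mu)=\int\log\,\lvert f'\rvert\,d\mu\ge\sigma>\alpha^+$, contradicting~\eqref{yu}.

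For the second claim, under the assumption $\Crit\cap J=\emptyset$, suppose $\underline\chi(x)<\alpha^-$ and fix $\sigma\in(\underline\chi(x),\alpha^-)$. Choose $n_i\to\infty$ with $\frac{1}{n_i}\log\,\lvert(f^{n_i})'(x)\rvert<\sigma$, form $\mu_{n_i}$ and extract a weak-$*$ limit $\mu\in\cM$ as above. Because $\Crit\cap J=\emptyset$, the function $\log\,\lvert f'\rvert$ is continuous on the compact set $J$, so weak-$*$ convergence gives the two-sided inequality directly:
\[
\chi(\mu)=\int\log\,\lvert f'\rvert\,d\mu=\lim_{i\to\infty}\int\log\,\lvert f'\rvert\,d\mu_{n_i}\le \sigma<\alpha^-,
\]
again contradicting~\eqref{yu}.

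The only subtle point is the asymmetry that explains why the second statement needs the absence of critical points while the first does not: upper semi-continuity of $\log\,\lvert f'\rvert$ is exactly what is needed to preserve a ``$\ge$'' inequality through the weak-$*$ limit (after the truncation trick), whereas preserving a ``$\le$'' inequality requires full continuity, which fails at critical points because $\log\,\lvert f'\rvert\to-\infty$ there and mass can escape to points of very negative derivative. This is the only mildly delicate step; everything else is a routine Krylov--Bogolyubov plus Pliss lemma argument.
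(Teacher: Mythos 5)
Your proof is correct and follows essentially the same route as the paper: push the empirical measures $\frac{1}{n_i}\sum_j\delta_{f^j(x)}$ to a weak-$*$ limit in $\cM$, handle the upper semicontinuity of $\log\lvert f'\rvert$ by truncation plus monotone convergence for the first claim, and use plain continuity for the second. The only (harmless) difference is that you invoke the Pliss lemma to produce the subsequence with Birkhoff average $\ge\sigma$, where the definition of $\overline\chi(x)$ as a $\limsup$ already supplies such a subsequence directly, as the paper does.
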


\begin{proof}
Consider an arbitrary $x\in J$ and a sequence $n_i\nearrow\infty$ such
that
\[
\lim_{i\to\infty} \frac 1 {n_i} \sum_{j=0}^{n_i-1} \log\, \lvert
f' (f^j(x))\rvert = \overline\chi(x)
\]
and
\[
\mu_i\eqdef\frac 1 {n_i} \sum_{j=0}^{n_i-1} \delta_{f^j(x)}\, \to
\mu
\]
in the weak$\ast$ topology. The limit measure $\mu$ is $f$-invariant,
\cite[Theorem 6.9]{Wal}.
Define
\[
g_N\eqdef\max\{\log\, \lvert f'\rvert, -N\} .
\]
Notice that $\{g_N\}_N$ is a monotonically decreasing sequence of continuous functions
that
converge pointwise to $\log\,\lvert f'\rvert$. Hence we obtain
\[
\overline\chi(x)
\le \lim_{N\to\infty}\int g_N\,d\mu
=\int\log\,\lvert f'\rvert\,d\mu
=\chi(\mu)\le \alpha^+,
\]
where the equality follows from the Lebesgue monotone
convergence theorem. This proves the first statement.

The second statement follows simply from the fact that $\log \,\lvert f'\rvert$ is
continuous on $J$ if $f$ has no critical points in $J$.
\end{proof}

\begin{remark}{\rm
We remark that the same method of proof gives a slightly stronger result than the fact
that
$\overline\chi(x)\le\alpha^+$ for every $x$. Namely we have
\[
\lim_{n\to\infty}\sup_{z\in\bC} \frac 1 n \log\, \lvert (f^n)'(z)\rvert \le \alpha^+,
\]
see~\cite[Proposition~2.3.~item~2]{PrzRiv}.

Recall that on a set of total probability we have $\chi(x)\ge
0$~\cite{Prz:93}. In fact, a better estimate can be given for any
Lyapunov regular point $x$ (compare the proof of~\cite[Proposition
4.1]{PrzRivSmi:03}).
}\end{remark}

\begin{lemma}\label{lem:finit2}
If $x\in J$ has a finite Lyapunov exponent $\chi(x)$ then $\chi(x)\geq \alpha^-$, that
is, we have
\[
 \left\{x\in J\colon -\infty<\chi(x)<\alpha^- \right\} = \emptyset.
\]
If there are no critical points in $J$ then $\cL(-\infty)$ is
empty. If there is only one critical point in $J$ then
$\cL(-\infty)$ consists only of this critical point and its
preimages.
\end{lemma}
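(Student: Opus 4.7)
The plan is to prove the inequality $\chi(x)\ge\alpha^-$ via a uniform preimage-sum estimate combined with the characterization $\alpha^-=\lim_{d\to\infty}-\frac{1}{d}P(\varphi_d)$ from~\eqref{yu}. The empirical-measure method of Lemma~\ref{lem:finit1} does not adapt directly: applying the continuous truncations $g_N=\max\{\log\lvert f'\rvert,-N\}\ge\log\lvert f'\rvert$ in that argument yields only $\chi(\mu)\ge\chi(x)$ for any weak-$\ast$ accumulation point $\mu$ of $\frac{1}{n}\sum\delta_{f^jx}$, giving an upper bound on $\chi(x)$ (namely $\chi(x)\le\chi(\mu)\le\alpha^+$) rather than the lower bound we need.

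The main step I would establish is the following uniform bound: for each $d>0$ there exists $C_d>0$ such that
\[
\sum_{y\in f^{-n}(z)}\lvert(f^n)'(y)\rvert^{-d} \le C_d\,e^{nP(\varphi_d)}
\qquad\text{for every }n\ge 0\text{ and every }z\in J.
\]
I would prove this using the $e^{P(\varphi_d)-\varphi_d}$-conformal measure $\nu_d$, which exists for every $d\ge 0$ and is positive on open sets meeting $J$ (Section~\ref{sec:conf}). Pulling back a small ball $B=B(z,r)\cap J$ under the inverse branches $f^{-n}_y$ of $f^n$ with bounded distortion, the conformality rule ${\rm Jac}_{\nu_d}f^n=e^{nP(\varphi_d)}\lvert(f^n)'\rvert^d$ gives $\nu_d(f^{-n}_y(B))\asymp e^{-nP(\varphi_d)}\lvert(f^n)'(y)\rvert^{-d}\,\nu_d(B)$. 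Summing over the mutually disjoint pullback pieces and using $\sum_y\nu_d(f^{-n}_y(B))\le 1$ together with a uniform lower bound $\nu_d(B)\ge c_r>0$ (coming from lower semicontinuity of $z\mapsto\nu_d(B(z,r))$, positivity of $\nu_d$ on opens intersecting $J$, and compactness of $J$) then delivers the bound.

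Given this estimate, the first assertion is immediate: if $\chi(x)>-\infty$ then $\lvert(f^n)'(x)\rvert>0$ for every $n$, so $x$ contributes a single term to the preimage sum for $z=f^n(x)$, giving $\lvert(f^n)'(x)\rvert^{-d}\le C_de^{nP(\varphi_d)}$. Dividing by $n$ and letting first $n\to\infty$ and then $d\to\infty$ produces $\chi(x)\ge\alpha^-$. The remaining assertions then follow easily: if $J$ contains no critical point, then $\log\lvert f'\rvert$ is continuous on the compact set $J$ and hence bounded below, so $\chi(x)>-\infty$ for every $x\in J$ and $\cL(-\infty)=\emptyset$. If $\Crit\cap J=\{c\}$, the same preimage-sum argument gives $\chi(x)\ge\alpha^-$ for every $x\in J$ whose forward orbit avoids $c$; the remaining points are precisely the backward orbit of $c$, on which $(f^n)'(x)=0$ for large $n$ and hence $\chi(x)=-\infty$.

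The main obstacle is the uniform preimage-sum bound: when $z\in J$ lies close to the post-critical set, the inverse branches $f^{-n}_y$ need not enjoy uniformly bounded distortion, so one must invoke the telescope/shrinking-neighborhood constructions of the tree-pressure framework from~\cite{PrzRivSmi:04} to control the pullback geometry uniformly in $z$.
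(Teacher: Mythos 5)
Your reduction of the first assertion to a bound of the form $\lvert(f^n)'(x)\rvert^{-d}\le C_d\,e^{nP(\varphi_d)}$ is sound in outline, and your observation that the empirical-measure argument of Lemma~\ref{lem:finit1} only yields the upper bound $\chi(x)\le\alpha^+$ is correct. But the central step has a genuine gap: the uniform preimage-sum estimate is false as stated. If $J$ contains a critical point $c$ of local degree $k$, then for $z$ near the critical value $f(c)$ one already has $\sum_{y\in f^{-1}(z)}\lvert f'(y)\rvert^{-d}\gtrsim \rho(z,f(c))^{-d(k-1)/k}$, so no constant $C_d$ uniform in $z\in J$ exists even for $n=1$; and since you must take $z=f^n(x)$, which may approach the postcritical set, you cannot restrict to the ``good'' base points for which the tree-pressure identities of~\cite{PrzRivSmi:04} hold. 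What your argument really requires is the single-branch version: that $f^{-n}_x$ is univalent with distortion bounded uniformly in $n$ on a ball $B(f^n(x),r)$ of \emph{definite} radius $r$, so that $\nu_d(B(f^n(x),r))\ge c_r>0$. That is exactly the hard analytic content, and it is unavailable in the regime this lemma must cover: a point with $\chi(x)$ merely finite (possibly $0$) has no hyperbolic times with positive exponent, hence no Koebe control on balls of fixed size. This is precisely why the conformal-measure mechanism you describe is deployed in the paper only for conical points and points with $\overline\chi(x)>0$ (Propositions~\ref{prop:1} and~\ref{prop:kjof}).

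The paper's proof takes a different route. It first deduces $\chi(x)\ge 0$ from the finiteness of $\chi(x)$ via~\eqref{xuxu} and~\cite{Prz:93}, takes hyperbolic times $n$ with slightly negative exponent $-\delta/2$, and runs a Pesin-type telescope with balls $B_k$ of radii $e^{(-(d+1)n+k)\delta}$ shrinking with $n$, obtaining distortion $\le e^{\delta/2}$ of $f^{-n}_x$ only on an exponentially small ball $B_0$ around $f^n(x)$; it then transports this branch to a fixed hyperbolic set using the chain of disks from~\cite[Lemma~3.1]{Prz:99} and closes up a repelling periodic point $p$ with $\chi(p)\le\chi(x)+O(\delta)$, contradicting $\alpha^-=\inf_{\mu\in\cM}\chi(\mu)$. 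Note also that your treatment of the one-critical-point case is too quick: one must exclude points whose forward orbit avoids $c$ yet approaches it so fast that $\frac1n\sum_{k<n}\log\lvert f'(f^k(x))\rvert\to-\infty$; the paper invokes~\cite[Lemmas 2.1 and 2.3]{DenPrzUrb} for exactly this.
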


\begin{proof}
Let $x\in J$ be a Lyapunov regular point with exponent $\chi(x)$
and assume that $\chi(x) < \alpha^-$. It is enough for us to prove that there
exists a periodic point with Lyapunov exponent arbitrarily
close to $\chi(x)$, the contradiction will then follow from the
definition of $\alpha^-$.

Note first that if $\chi(x)$ exists and is finite then $\frac 1 n
\log \,\lvert (f^n)'(x)\rvert$ must be a Cauchy sequence and hence satisfies
\begin{equation}\label{xuxu}
\liminf_{n\to\infty} \frac 1 n \log \rho(f^n(x), {\rm Crit}) =0.
\end{equation}
By \cite[Corollary to Lemma~6]{Prz:93}, we then can conclude that $\chi(x)\ge
0$.
Choose now a small number $\delta>0$. 
Let $n$ be a hyperbolic time for $x$ with exponent $-\delta/2$ 
 (recall that $x$ has infinitely many hyperbolic times with that exponent, and hence that $n$ can be chosen arbitrarily big). 
Because of~\eqref{xuxu}, there exists an integer $n_0>0$ such that for all $k\geq n_0$ we have
\begin{equation} \label{cos0}
\rho(f^k(x), {\rm Crit}) > \exp(-k\delta)
\end{equation}
and we can assume that $n>n_0$. 

We start with a construction that is standard in Pesin theory. For each $k =0$, $\ldots$, $n-n_0-1$ we define
\[
B_k\eqdef B\left(f^{n-k}(x), e^{(-(d+1)n+ k)\delta}\right),
\]
where $d$ is the greatest degree of a critical point of $f$. This
way we define a sequence of balls that are centered at points of
the backward trajectory of $f^n(x)$ and that have  diameters shrinking
slower than the derivative of $f^{-k}$ along this branch and at the same time have diameters much smaller than their distance from any critical point.
As we have $f^{n-k}(x)\in B_k$,~\eqref{cos0} implies that for $n$ big
enough for any $k<n-n_0$ the set $f^{-1}_{f^{n-k-1}(x)}(B_k)$ does not
contain any critical point and that
\begin{equation} \label{cos}
\log \dist f^{-1}_{f^{n-k-1}(x)}|_{B_k} \leq K_1 \frac {\diam B_k}
{\rho(f^{n-k}(x), {\rm Crit})},
\end{equation}
where $K_1$ is some constant that depends only on $f$.
\\[0.2cm]
\textbf{Claim:} 
If $n$ is sufficiently big then for any $k=0$, $\ldots$, $n$
the map $f^k$ is univalent  and has bounded distortion $\le \exp(\delta/2)$ on the set $f^{-k}_{f^{n-k}(x)}(B_0)$
and satisfies $f^k(B_k)\supset B_0$. 
\\[-0.2cm]

To prove the above claim, note
that the ball $B_{n-k}$ 
shrinks as $n$ increases. 
Hence, it is enough to prove the statement for $k< n-n_0$.
 The statement for the initial finitely many steps $k=n-n_0$, $\ldots$, $n$ is then
automatically provided $n$ is big enough.
Let us assume that $n$ is sufficiently big such that also
\[
K_1 (n-n_0) e^{-dn\delta} < \frac \delta 2.
\]
By construction of the family $\{B_k\}_k$ and by~\eqref{cos0},  for each $\ell\ge 1$ we have
\[
\sum_{k=0}^{\ell-1} \frac {\diam B_k} {\rho(f^{n-k}(x), {\rm Crit})}
< \sum_{k=0}^{\ell-1}\frac{ e^{(-(d+1)n+k)\delta}}{ e^{(-n+k)\delta}}
= \ell{e^{-dn\delta}}.
\]
Hence, if for $\ell\leq n-n_0$ we have $f^k(B_k)\supset B_0$ for every
$k=1$, $\ldots$, $\ell-1$ then~\eqref{cos} implies
\[
\log \dist f^{-\ell}_{f^{n-\ell}(x)} |_{B_0} \leq K_1 \ell
e^{-dn\delta} < \frac \delta 2.
\]
On the other hand, recall that $n$ is a hyperbolic time for $x$ with
exponent $-\delta/2$. Hence, if
\[
\log \dist f^{-\ell}_{f^{n-\ell}(x)} |_{B_0} < \frac \delta 2
\]
then $f^\ell(B_\ell)\supset B_0$. The above claim now follows by induction over
$\ell$.

Let us consider the set
\[
E\eqdef \bigcap_{k\ge1}\bigcup_{\ell > k} E_\ell, \quad\text{ where }
E_\ell\eqdef
\bigcup_{j=1}^{\ell} B\big(f^j({\rm Crit}), e^{-2d\ell\delta}\big).
\]
Notice that $B_0\setminus E_n$ is nonempty whenever the hyperbolic time $n$ is big enough.
For such $n$, let  $y\in B_0\setminus E_n$. From our distortion estimations for
$f^{-n}_{x}|_{B_0}$ in the Claim we obtain
\begin{equation} \label{pgui}
\log \frac {\left|(f^{-n}_x)'(f^n(x))\right|}
{\left|(f^{-n}_x)'(y)\right|} < \frac \delta 2.
\end{equation}

In the remaining proof we will follow closely techniques in~\cite[Section 3]{PrzRivSmi:03}.
Let us fix some arbitrary $f$-invariant uniformly expanding set $\Lambda\subset J$ that has positive Hausdorff dimension (the existence of such a set follows for example from Lemma~\ref{l0}). As $\Crit$ is a finite set, we have $\dim_{\rm \Lambda} E=0$ and we can find a point $z\in \Lambda\setminus E$. In particular $z\in \Lambda\setminus E_n$ for $n$ large.
Note that 
in particular for every $n$ large enough we have %
\begin{equation}\label{cacho}%
\rho(z,f^n(\Crit))> e^{-2dn\delta}.%
\end{equation} and hence %
on the disk $B(z,e^{-2dn\delta})$ any pull-back  $f^{-n}$ is univalent. 

 By~\cite[Lemma 3.1]{Prz:99}, there exists a number  $K_2>0$ depending only on $f$ and a sequence of disks $\{D_i\}_{i=1,\ldots,K}$ such that $\bigcup_{i=1}^KD_i$ is connected, $y$ is the center of disk $D_1$, $z$ is
the center of disk $D_K$, 
\[
\rho\Big(D_i, \bigcup_{j=1}^{n} f^j ({\rm Crit})\Big) \geq
\diam D_i,
\]
and the number of disks is bounded by $K\le K_2(n \delta)^{1/2}$.
By the Koebe distortion lemma, for each branch of $f^{-n}$ and for every $D_i$ we have
\[
\dist f^{-n}|_{D_i} < K_3,
\]
where $K_3>0$ is some constant.
 Hence, in particular
\[
\left\lvert\log \,\lvert(f^{-n}_w)'(z)\rvert - \log\,
\lvert(f^{-n}_{x})'(y)\rvert\right\rvert \leq K \log K_3
\]
for some $w\in f^{-n}(z)$. Together with~\eqref{pgui}, this
implies that
\[\begin{split}
\frac 1 n\Big\lvert \log\,\lvert(f^n)'(x)\rvert  -  \log\,\lvert(f^n)'(w)\rvert \Big\rvert 
&\leq \frac 1 n \left(\frac\delta 2 + K\log K_3 \right)\\
&\leq \frac 1 n \left(\frac\delta 2 + K_2(n\delta)^{1/2}\log K_3 \right) 
<\frac \delta 2,
\end{split}\]
whenever $n$ has been chosen large enough.

By hyperbolicity of $f|_\Lambda$, there exist positive  constants $\Delta$ and $K_4$ such that for all $\ell\ge0$ the map $f^{-\ell}$ is univalent and has bounded distortion $\le K_4$ on $B(f^\ell(z),\Delta)$. 
Recall that the Julia set $J$ has the property that there exists $m=m(\Delta)>0$ such that the image $f^m(B(f^\ell(z),\Delta)\cap J)$ is equal to $J$.

Let $\ell$ be the smallest positive integer such that $\lvert (f^\ell)'(z)\rvert\ge K_4\Delta e^{2d(n+m)\delta}$. Hence $f^{-\ell}_z(B(f^\ell(z),\Delta))\subset B(z,e^{-2d(n+m)\delta})$ and it is easy to show that $\ell\le K_5+K_6(n+m)\delta$ for some constants $K_5$, $K_6$. 
 So in particular, one of the preimages $f^{-(n+m)}(z)$ is in $B(f^\ell(z),\Delta)$ and the corresponding pull-back $W\eqdef f^{-(n+m)}(B(z,e^{-2d(n+m)\delta}))$ satisfies
$W\subset B(f^{\ell}(z),\Delta)$. It follows from~\eqref{cacho} that $f^{-(n+m+\ell)}\colon W\to B(z,e^{-2(n+m)\delta})$ is univalent and hence that there is a repelling fixed point $p=f^{n+m+\ell}(p)$ in $B(z,e^{-(n+m+\ell})$. 
 
Using the above, the Lyapunov exponent of $p$ can be  estimated by
\[\begin{split}
\chi(p)&=\frac{1}{n+m+\ell}\log\,\lvert (f^{n+m+\ell})'(p)\rvert \\
	&\le \frac{1}{n}\log\left\lvert(f^n)'(w)\right\rvert\frac{1}{1+(m+\ell)/n}
	+ \frac{m+\ell}{n+m+\ell}\sup \log\,\lvert f'\rvert\\
	&\le \left(\chi(x)+\frac \delta 2\right)\frac{1}{1+(m+\ell)/n}
	+ \frac{m+\ell}{n+m+\ell}\sup \log\,\lvert f'\rvert 
	.
\end{split}\]
Now recall that the hyperbolic time $n$ can be chosen arbitrarily large and that $\delta$ was chosen arbitrarily small. This proves the first claim of the lemma.

If there are no critical points in $J$ then $\log \,\lvert f'\rvert$ (and
hence $\chi$) is bounded from below. The remaining claim of the
lemma follows from~\cite[Lemmas 2.1 and 2.3]{DenPrzUrb}.
\end{proof}

\section{Upper bounds for the dimension}\label{sec:up}

In this section we will derive upper bounds for the Hausdorff dimension of the level sets.
We first start with a particular case.

\subsection{No critical points in $J$}\label{sec:nocrit}

We study the particular case that there is no critical point in the Julia set $J$
(though, parabolic points in $J$ are allowed).

We start by taking a more general point of view and investigate those points that have
some least degree of hyperbolicity. In terms of Lyapunov exponents, this concerns points
$x$ with
$\overline\chi(x)>0$.
We begin our analysis by presenting a simple lemma, that will be useful shortly after.

\begin{lemma} \label{lem:sequ}
Let $\{a_n\}_n$ be a sequence of real numbers such that
$\{a_{n+1}-a_n\}_n$ converges to zero but $\{a_n\}_n$ does not have a
limit. Then for any natural number $r$ and for any number $q\in
[\liminf_{n\to\infty} a_n, \limsup_{n\to\infty}  a_n]$ there exists a subsequence
$n_k\nearrow\infty$  such that
\[
\lim_{k\to \infty} a_{n_k} = q
\]
and for every $k$ we have
\[
a_{n_k}<a_{n_k+r}.
\]
\end{lemma}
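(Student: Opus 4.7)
The plan is to exploit three ingredients: (i) since $a_{n+1}-a_n\to 0$, for any fixed $j\le r$ the difference $|a_{n+j}-a_n|$ can be made arbitrarily small by taking $n$ large; (ii) the hypothesis that $\{a_n\}$ has no limit forces $\liminf a_n<\limsup a_n$, so any $q$ in between must be ``crossed'' infinitely often; (iii) a discrete intermediate value argument applied to the $r$-subsampled sequence $b_i\eqdef a_{m+ir}$ produces one step in which the sequence strictly increases \emph{and} passes near $q$ simultaneously. First I would handle the case $q\in(\liminf a_n,\limsup a_n)$, then reduce the two endpoint cases $q=\liminf a_n$ and $q=\limsup a_n$ to the interior case by an approximation/diagonal argument.

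For $q$ in the interior, fix a small $\varepsilon>0$ with $q-\varepsilon>\liminf a_n$ and $q+\varepsilon<\limsup a_n$. Choose $N$ large enough that $|a_{n+1}-a_n|<\varepsilon/(3r)$ for all $n\ge N$, so that $|a_{n+j}-a_n|<\varepsilon/3$ for every $0\le j\le r$ and $n\ge N$. Pick $m\ge N$ with $a_m\le q-\varepsilon$ and then $M>m$ with $a_M\ge q+\varepsilon$. Let $s\eqdef\lfloor(M-m)/r\rfloor$ and set $b_i\eqdef a_{m+ir}$ for $i=0,\ldots,s$. Then $b_0\le q-\varepsilon$, while $b_s\ge a_M-\varepsilon/3\ge q+2\varepsilon/3$, and the consecutive differences satisfy $|b_{i+1}-b_i|\le\varepsilon/3$. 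Let $i^\ast\ge 1$ be the smallest index with $b_{i^\ast}\ge q$; by minimality $b_{i^\ast-1}<q\le b_{i^\ast}$, which gives both the strict inequality $b_{i^\ast-1}<b_{i^\ast}$ and the estimate $b_{i^\ast-1}>b_{i^\ast}-\varepsilon/3\ge q-\varepsilon/3$. Setting $n_\varepsilon\eqdef m+(i^\ast-1)r$ we therefore obtain
\[
a_{n_\varepsilon}\in(q-\varepsilon/3,\,q)\quad\text{and}\quad a_{n_\varepsilon+r}\ge q>a_{n_\varepsilon}.
\]
Iterating with $\varepsilon=\varepsilon_k\searrow 0$ and at each step choosing $m=m_k$ larger than $n_{k-1}+r$ produces a strictly increasing sequence $n_k\nearrow\infty$ with $a_{n_k}\to q$ and $a_{n_k}<a_{n_k+r}$, as required.

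For the endpoint case $q=\liminf a_n$, take a sequence $q_\ell\searrow q$ with $q_\ell\in(q,\limsup a_n)$, apply the interior case to each $q_\ell$ with tolerance $\varepsilon_\ell<\min\{1/\ell,q_\ell-q\}$, and diagonalize: the resulting indices $n_\ell$ satisfy $a_{n_\ell}\in(q_\ell-\varepsilon_\ell,q_\ell)$, which forces $a_{n_\ell}\to q$ from above. The case $q=\limsup a_n$ is entirely symmetric, taking $q_\ell\nearrow q$; note that our construction always produces $a_{n_k}<q_\ell$, so approaching from below automatically gives convergence to $\limsup a_n$.

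The main obstacle I expect is not the analysis itself but ensuring the strict inequality $a_{n_k}<a_{n_k+r}$ is extracted cleanly from the $r$-subsampled IVT: this is precisely what the choice of $i^\ast$ as the \emph{first} index with $b_{i^\ast}\ge q$ delivers, since it guarantees $b_{i^\ast-1}<q\le b_{i^\ast}$ (strict on the left). The smallness of $|b_{i+1}-b_i|$ then simultaneously forces $b_{i^\ast-1}$ (and hence $a_{n_k}$) to lie within $\varepsilon/3$ of $q$. Everything else in the argument is standard bookkeeping.
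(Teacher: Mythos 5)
Your proof is correct and uses essentially the same idea as the paper's: locate an upward crossing of a level near $q$ and use the smallness of consecutive differences to force both terms of the crossing step close to $q$. The paper reduces to $r=1$ by passing to $\{a_{rn}\}$ and crosses the level $q\mp\varepsilon$ directly, while you subsample along an arithmetic progression and take the first crossing of $q$ itself (treating the endpoints $q=\liminf a_n$, $q=\limsup a_n$ by approximation); these are only organizational differences.
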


\begin{proof}
We will restrict our hypothesis to the case that $r=1$. The general case then follows
from considering the  subsequence $\{a_{rn}\}_n$.

First note that every number  $q\in [\liminf a_n,\limsup a_n]$ is the limit of
some subsequence of $\{a_n\}_n$. Hence, if $q\neq\liminf a_n$ then
for every $\varepsilon >0$ there must exist infinitely many numbers
$m=m(\varepsilon)$ such that
\[
a_m<q-\varepsilon<a_{m+1}.
\]
Similarly, if $q\neq\limsup a_n$ then for every $\varepsilon >0$
there must exist infinitely many numbers $m=m(\varepsilon)$ such that
\[
a_m<q+\varepsilon<a_{m+1}.
\]
Since we assume that the sequence $\{a_n\}_n$ does not have a
limit, $q$ must satisfy one of the abovementioned properties.
Hence, if we choose a decreasing family $\{\varepsilon_k\}_k$ and
for each $\varepsilon_k$ one of the corresponding numbers
$n_k=m(\varepsilon_k)$, we obtain
\[
\lvert a_{n_k}-q \rvert \leq \varepsilon_k + \lvert a_{n_k}-a_{n_k-1}\rvert  \to 0.
\]
The second part of the assertion is immediately satisfied.
\end{proof}

Lemma~\ref{lem:sequ} will help us to establish some bounded
distortion properties. The following result implies in particular
that every point $x$ with $\overline\chi(x)>0$ is conical (recall
the definition of a conical point in Section~\ref{sec:hypinst}).

\begin{lemma} \label{lem:bdp}
Assume that $J$ does not contain any critical points of $f$. Let
$x\in J$ be a point with $\overline{\chi}(x)>0$. Then there exists a number $K>0$ such
that for every $q\in [\underline{\chi}(x),
\overline{\chi}(x)]\setminus \{0\}$ there exists a number $\delta>0$ and a
sequence $n_k\to \infty$ such that
\begin{itemize}
\item[i)] $\displaystyle \lim_{k\to \infty} \frac 1 {n_k} \log \,\lvert
(f^{n_k})'(x)\rvert =
q$,\\[-0.1cm]
\item[ii)] $\displaystyle \dist f^{n_k}|_{f_x^{-n_k}(B(f^{n_k}(x), \delta))} <
K.$
\end{itemize}
Here $K$ is a universal constant, while $\delta$ depends on the number $q$ but not
on the point $x$.
\end{lemma}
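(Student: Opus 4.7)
I would set $a_n := \tfrac{1}{n}\log\lvert(f^n)'(x)\rvert$. Because $\Crit\cap J=\emptyset$, the function $\log\lvert f'\rvert$ is continuous and bounded on $J$, so $\lvert a_{n+1}-a_n\rvert\to 0$; thus Lemma~\ref{lem:sequ} applies with $\liminf a_n=\underline{\chi}(x)$ and $\limsup a_n=\overline{\chi}(x)$.

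For concreteness, take $q\in(0,\overline{\chi}(x)]$. The plan is to combine Lemma~\ref{lem:sequ} with the Pliss lemma to produce a subsequence $n_k$ such that $a_{n_k}\to q$ and such that $n_k$ is a forward hyperbolic time for $x$ with some fixed exponent $\sigma\in(0,q)$. The inequality $a_{n_k+r}>a_{n_k}$ furnished by Lemma~\ref{lem:sequ} (for a suitably chosen fixed $r$) asserts that on the window $[n_k,n_k+r)$ the average of $\log\lvert f'(f^i(x))\rvert$ strictly exceeds $a_{n_k}\approx q$. Applying an appropriate version of the Pliss lemma to the whole partial-sum sequence $\sum_{i<n}\log\lvert f'(f^i(x))\rvert$ and using this extra slack on the window, one perturbs $n_k$ by a bounded amount to make each $n_k$ a forward hyperbolic time with exponent $\sigma$; the bounded shift does not affect the limit since $\lvert a_{n+1}-a_n\rvert\to 0$.

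With these hyperbolic times in hand, (ii) is a standard Koebe-type estimate. Since $\Crit\cap J=\emptyset$ and $J$ is compact, one can pick $\delta>0$ (depending on $\sigma$, hence on $q$) such that every ball $B(y,\delta)$ with $y$ in a suitable neighborhood of $J$ is disjoint from $\Crit$ and $\log\lvert f'\rvert$ is $L$-Lipschitz there. By the hyperbolic time property, $f_x^{-n_k}$ is univalent on $B(f^{n_k}(x),\delta)$, and its $k$-th intermediate pre-image has diameter at most $C\delta\,e^{-k\sigma}$; the cumulative log-distortion is then bounded by $L\sum_{k\ge 0}C\delta\,e^{-k\sigma}=\tfrac{LC\delta}{1-e^{-\sigma}}$, giving a universal constant $K$ independent of $x$ and $q$.

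The principal obstacle is the coordinated selection of hyperbolic times along which $a_{n_k}\to q$: Pliss alone supplies hyperbolic times in abundance but with no direct control on the limiting value of $a_{n_k}$, whereas Lemma~\ref{lem:sequ} alone steers the limit without guaranteeing any hyperbolic time property. The window-and-shift argument above reconciles the two. The case $q\in[\underline{\chi}(x),0)$, which only arises if $\underline{\chi}(x)<0$, would be handled by a dual argument based on backward-contraction estimates in place of forward hyperbolic times.
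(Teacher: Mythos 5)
There is a genuine gap at exactly the point you call the principal obstacle: the coordinated selection of hyperbolic times $n_k$ along which $a_{n_k}\to q$. The inequality $a_{n_k}<a_{n_k+r}$ supplied by Lemma~\ref{lem:sequ} controls a single \emph{forward} window of fixed length $r$, whereas a hyperbolic time with exponent $\sigma$ requires $\lvert (f^k)'(f^{n-k}(x))\rvert\ge e^{k\sigma}$ for \emph{every backward} window $1\le k\le n$; no bounded shift converts the former into the latter. Concretely, suppose the orbit of $x$ alternates between hyperbolic behaviour and excursions of length $L\sim cm$ in a small neighbourhood of a parabolic point starting at time $m$ (this is precisely the mechanism by which $\underline\chi(x)$ can be strictly smaller than $\overline\chi(x)$ while both stay positive). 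At the times $n$ where $a_n$ is close to $q=\underline\chi(x)$, i.e.\ at the ends of these excursions, the backward window of length $L$ has average close to $0$, so neither $n$ nor $n+j$ for bounded $j$ is a hyperbolic time with any fixed exponent $\sigma>0$. The Pliss lemma produces hyperbolic times in abundance, but only at times where all backward averages are large, and these cannot be steered to satisfy $a_{n_k}\to q$ for $q$ in the lower part of $[\underline\chi(x),\overline\chi(x)]$. So your construction fails for such $q$, and the distortion estimate that rests on the geometric decay $C\delta e^{-k\sigma}$ of the intermediate pullbacks collapses with it.

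The paper's proof avoids hyperbolic times entirely, and the idea it uses is absent from your write-up: since $\Crit\cap J=\emptyset$, forward orbits of critical points can accumulate on $J$ only at parabolic points. Hence one can pick $\delta_0,\delta_1>0$ so that whenever $f^{n}(x)$ is $\delta_0$-far from every parabolic point, the ball $B(f^{n}(x),2\delta_1)$ meets no forward critical orbit; then every pullback $f^{-n}_x$ of this ball is univalent and the Koebe distortion theorem gives a universal bound $K$ on $B(f^{n}(x),\delta_1)$ with no contraction along the orbit needed. It remains to choose $n_k$ with $a_{n_k}\to q$ and $f^{n_k}(x)$ $\delta_0$-far from parabolic points, and this is where $q>0$ and the second conclusion of Lemma~\ref{lem:sequ} are used: $\delta_0$ is chosen so that $\frac1r\log\lvert(f^r)'\rvert\le q/2$ near the parabolic points ($r$ the common period), so if $f^{n_k}(x)$ were $\delta_0$-close to one of them we would get $a_{n_k+r}\le\frac{n_k}{n_k+r}a_{n_k}+\frac{r}{n_k+r}\frac q2<a_{n_k}$ for large $k$, contradicting $a_{n_k}<a_{n_k+r}$. (In the regular case the same estimate shows such times occur infinitely often.) Finally, the case $q<0$ you defer to a ``dual argument'' cannot occur: with no critical points in $J$ one has $\underline\chi(x)\ge\alpha^-\ge0$ by Lemma~\ref{lem:finit1}.
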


\begin{proof}
As $J$ does not contain any critical point, the only accumulation
points in $J$ of the orbit of some critical point can be parabolic points.
Let $r$ be the least common multiplier of the
periods of all parabolic points in $J$.

Given a number $q\in [\underline{\chi}(x),
\overline{\chi}(x)]\setminus \{0\}$, there exists a number
$\delta_0>0$ such that if $y\in J$ is 
$\delta_0$-close to some
 parabolic point then
\[
\frac 1 r \log \,\lvert (f^r)'(y)\rvert \leq \frac q 2.
\]
Further, there exists a number $\delta_1>0$ such that if $z\in J$ is
$\delta_0$-away from any
parabolic point then no orbit of a critical
point passes through $B(z, 2\delta_1)$.

To prove the claimed property, it will suffice to find a sequence
$\{n_k\}_k$ for which i) is satisfied and for which $f^{n_k}(x)$ is
in distance at least $\delta_0$ from any parabolic point. Indeed,
in such a situation the backward branch of
$f_x^{-n_k}(B(f^{n_k}(x), 2\delta_1))$ will not catch any critical
point and the distortion estimations ii) will follow from the
Koebe distortion lemma.

If $x$ is a Lyapunov regular point and
$q=\lim_{n\to\infty}\frac{1}{n}\log\,\lvert (f^n)'(x)\rvert$ is
its Lyapunov exponent (which, by our assumptions, must be
positive) then the claim i) is automatically satisfied. In this
case we just need to choose $n_k$ such that $f^{n_k}(x)$ is far
away from any parabolic point. Note that there must be infinitely
many such times $n_k$ because otherwise the Lyapunov exponent at
$x$ would be no greater than $q/2$.

If $x$ is not a Lyapunov regular point, then we apply Lemma~\ref{lem:sequ} to the sequence
\[
a_n=\frac 1 n \log \,\lvert (f^n)'(x)\rvert.
\]
Notice that $\lim_{n\to\infty} (a_{n+1}-a_n)=0$ is satisfied,
because there are no critical points in $J$ and hence $\lvert
f'\rvert$ is uniformly bounded. Hence, from the first assertion of
Lemma~\ref{lem:sequ} we  obtain a sequence $\{n_k\}_k$ that
satisfies i). Notice that we have
\[
a_{n_k+r}
= \frac {n_k} {n_k+r} a_{n_k} + \frac 1 {n_k+r} \log\,\lvert (f^r)'(f^{n_k}(x))\rvert
\leq \frac {n_k} {n_k+r} a_{n_k} +\frac {r}{n_k+r} \frac q 2
\]
whenever $f^{n_k}(x)$ is 
$\delta_0$-close to some
parabolic point. This inequality cannot be true for big $n_k$
(when $a_{n_k}$ is already close to $q$) because of the second
part of assertion of Lemma \ref{lem:sequ}. This proves that for
any time $n_k$ large enough the point $f^{n_k}(x)$ is
$\delta_0$-away from any parabolic point.
\end{proof}

We are now prepared to prove an upper bound for the Hausdorff
dimension of the level sets under consideration. To start with the
most general approach that will be needed in the subsequent
analysis, we first study a set of points $x$ for which
$\overline\chi(x)>0$ and for which additionally the Lyapunov
exponent (possibly with respect to some subsequence of times) is
guaranteed to be within a given interval $[\alpha,\beta]$. Let us
first introduce some notation. Given $0\le\alpha\le\beta$,
$\beta>0$, let
\begin{equation} \label{cl}
\ccL(\alpha,\beta) \eqdef
 \{x\in J\colon \underline\chi(x)\leq\beta, \, \overline\chi(x)\geq\alpha, \,
\overline\chi(x)>0\}.
\end{equation}

\begin{proposition}\label{prop:1}
Assume that $J$ does not contain any critical points of $f$. For
every $\beta>0$ and $0\le \alpha\le \beta$ we have
    \[
    \dim_{\rm H}\widehat\cL(\alpha,\beta)\le
    \max_{\alpha\le q\le \beta} F(q).
    \]
    If $\alpha>\alpha^+$ or $\beta<\alpha^-$ then $\widehat\cL(\alpha,\beta)=\emptyset$.
\end{proposition}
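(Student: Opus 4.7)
The emptiness of $\widehat{\cL}(\alpha,\beta)$ when $\alpha>\alpha^+$ or $\beta<\alpha^-$ follows directly from Lemma~\ref{lem:finit1}, since the no-critical-points hypothesis forces $\alpha^-\le\underline{\chi}(x)$ and $\overline{\chi}(x)\le\alpha^+$ for every $x\in J$, making the defining conditions of $\widehat{\cL}(\alpha,\beta)$ inconsistent in either case. The same hypothesis also implies $\Sigma\cap J=\emptyset$ whenever $f$ is exceptional: for $p\in\Sigma\cap J$, backward invariance of $J$, absence of critical points there, and the identity $f^{-1}(\Sigma)\setminus\Crit=\Sigma$ give $f^{-n}(p)\subset\Sigma$ with $|f^{-n}(p)|=d^n\to\infty$, contradicting the finiteness of $\Sigma$. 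Hence Section~\ref{sec:conf} supplies, for every $d\in\bR$, a full-support $e^{P(\varphi_d)-\varphi_d}$-conformal measure $\nu_d$ on $J$.

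The key local estimate is obtained by pulling back $\nu_d$ along long iterates with bounded distortion. Fix $x\in\widehat{\cL}(\alpha,\beta)$. Since $\overline{\chi}(x)>0$ and $\beta>0$, I select $q=q(x)\in[\underline{\chi}(x),\overline{\chi}(x)]\cap[\alpha,\beta]\cap(0,\infty)$. Lemma~\ref{lem:bdp} then provides $\delta=\delta(q)>0$ and times $n_k\to\infty$ along which $|(f^{n_k})'(x)|=e^{n_k q(1+o(1))}$ and the pullback $W_k\eqdef f_x^{-n_k}(B(f^{n_k}(x),\delta))$ has distortion bounded by a universal constant; in particular $W_k$ contains a ball about $x$ of radius $r_k\asymp e^{-n_k q}$. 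The conformality identity for the injective branch $f^{n_k}|_{W_k}$, combined with bounded distortion and the uniform lower bound $\inf_{y\in J}\nu_d(B(y,\delta))>0$ (a consequence of full support of $\nu_d$ and a finite cover of the compact set $J$ by balls of radius $\delta/2$), then yields radii $r_k'\to 0$ with
\[
\nu_d(B(x,r_k')) \ge (r_k')^{(P(\varphi_d)+dq)/q+o(1)}.
\]

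A single $d$ does not suffice, since the exponent depends on $q=q(x)$, whereas $F(q)=\inf_d(P(\varphi_d)+dq)/q$ requires the optimizer to vary with $q$. Given $\varepsilon>0$, I partition $[\alpha,\beta]\cap[\alpha^-,\alpha^+]\cap(0,\infty)$ into finitely many sub-intervals $I_j$ of length $\le\varepsilon$ centered at $q_j$, and choose $d_j\in\bR$ with $(P(\varphi_{d_j})+d_j q_j)/q_j\le F(q_j)+\varepsilon$. Setting $E_j\eqdef\{x\in\widehat{\cL}(\alpha,\beta):q(x)\in I_j\}$, a short computation yields, for every $x\in E_j$,
\[
\frac{P(\varphi_{d_j})+d_j q(x)}{q(x)} \le F(q_j)+O(\varepsilon) \le \max_{q\in[\alpha,\beta]}F(q)+O(\varepsilon).
\]
Thus every $x\in E_j$ admits arbitrarily small radii $\rho$ with $\nu_{d_j}(B(x,\rho))\ge\rho^s$ for $s=\max_{[\alpha,\beta]}F+O(\varepsilon)$; a standard Besicovitch covering of $E_j$ by such balls then gives $\mathcal{H}^s(E_j)<\infty$, hence $\dim_{\rm H}E_j\le s$. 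Taking the finite union over $j$ and letting $\varepsilon\to 0$ completes the proof.

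\textbf{Main obstacle.} The adaptive dependence of the local-density exponent on $q(x)$ is the main difficulty, since a single conformal measure $\nu_d$ only produces the bound $(P(\varphi_d)+dq(x))/q(x)$ for its own $d$, which in general exceeds $F(q(x))$; the finite partition of $[\alpha,\beta]$ and the corresponding near-optimal $d_j$'s resolve this. Secondary subtleties include uniform control of $\delta(q)$ and of $|d_j|$ across each sub-interval (handled via continuity of $F$ and boundedness of $d_j$ away from the edges of the spectrum), together with absorption of edge effects near $q=0,\alpha^\pm$ into the $\varepsilon\to 0$ limit via upper semicontinuity of $F$.
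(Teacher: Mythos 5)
Your proof is correct and follows essentially the same route as the paper: Lemma~\ref{lem:bdp} to extract a limit exponent $q(x)$ with bounded distortion, pullback of the conformal measures $\nu_d$ to bound the lower pointwise dimension by $P(\varphi_d)/q+d$, a cover of the range of $q$ by small intervals with near-optimal exponents $d$, and a Frostman/Besicovitch argument (your remark that the absence of critical points in $J$ forces $\Sigma\cap J=\emptyset$, so that $\nu_d$ exists for $d<0$, is a welcome explicit justification of a point the paper leaves implicit). The one place requiring care is the order of quantifiers in your covering step: the length of the sub-interval around $q_j$ must be chosen \emph{after} $d_j$ --- small compared with $q_j^2/(1+\lvert P(\varphi_{d_j})\rvert)$, so that the error term $P(\varphi_{d_j})\left(\tfrac{1}{q}-\tfrac{1}{q_j}\right)$ remains $O(\varepsilon)$ near $q=0$ and near $\alpha^\pm$ where $\lvert d_j\rvert$ is unbounded --- which is precisely why the paper uses a countable family of variable-length intervals and the single measure $\nu=\sum_i 2^{-i}\nu_{d_i}$ rather than a uniform-length partition fixed in advance.
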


\begin{proof}
By Lemma \ref{lem:bdp}, for every point $x\in \ccL(\alpha, \beta)$
there exist a number $q=q(x)\in[\alpha,\beta]\setminus \{0\}$, a
number $\delta>0$, and a sequence $\{n_k\}_k$ of numbers such that
\begin{equation} \label{eqn:gh}
\lim_{k\to\infty}\frac{1}{n_k}\log\,\lvert (f^{n_k})'(x)\rvert=q.
\end{equation}
and
\begin{equation} \label{est2}
2\delta\,\lvert(f^{n_k})'(x)\rvert^{-1} K^{-1}\le \diam
f^{-n_k}_x(B(f^{n_k}(x),\delta)) \le 2\delta
\,\lvert(f^{n_k})'(x)\rvert^{-1}K.
\end{equation}

Recall that for every $d\in\bR$ there exists a
$\exp\left(P(\varphi_d)-\varphi_d\right)$-conformal measure
$\nu_d$ that gives positive measure to any open set (see Section~\ref{sec:conf}). Hence there
exists $c=c(\delta)>0$ such that for every $n_k$ we have $c\le
\nu_d(B(f^{n_k}(x),\delta))\le 1$. Using again the distortion
estimates, we can conclude that
\begin{multline}\label{huch}
    c K^{-d} \,e^{-n_k P(\varphi_d)} \lvert (f^{n_k})'(x)\rvert^{-d} \\
\le \nu_d(f^{-n_k}_x(B(f^{n_k}(x),\delta)))\le
    K^d e^{-n_k P(\varphi_d)}\lvert (f^{n_k})'(x)\rvert^{-d},
\end{multline}
which implies that
\[
\lim_{k\to\infty}\frac{1}{n_k}\log\,
\nu_d(f^{-n_k}_x(B(f^{n_k}(x),\delta))) = -P(\varphi_d)-d\,q
                  \]
and in particular that this limit exists. Hence, there exists $N>0$ such that
for every $n_k\ge N$ we have
\[\begin{split}
 \nu_d \big(f^{-n_k}_x(B  (f^{n_k}(x),&\delta))\big)\\
&\ge e^{-n_k(P(\varphi_d)+dq+d\delta)}\\
&\ge e^{-n_kP(\varphi_d)} \lvert (f^{n_k})'(x)\rvert^{-d}\\
&\ge  K^{-2d} e^{-n_kP(\varphi_d)} \left(\diam
f^{-n_k}_x(B(f^{n_k}(x),\delta))\right)^d
\left(\frac{1}{2\delta}\right)^d.
\end{split}\]
Here we used Lemma~\ref{lem:bdp} to obtain the
last inequality. Applying~\eqref{eqn:gh} and~\eqref{est2} we yield
\begin{equation}\label{esto3}
\underline d_{\nu_d}(x) \le \frac{P(\varphi_d)}{q}+d .
\end{equation}

For any $q_0\in [\alpha, \beta]$, $q_0>0$, and $\varepsilon>0$
there exist a  small interval $(q_1,q_2)$, $q_1>0$, containing
$q_0$ and a number $d\in\bR$ such that 
for all $q\in (q_1,q_2)$
\begin{equation} \label{ala1}
\frac{1}{q} P(\varphi_d) + d <
\begin{cases}
F(q) + \varepsilon  &\text{ if }F(q)\neq -\infty ,\\
 -100               &\text{ if }F(q)=-\infty.\end{cases}
\end{equation}
We can choose a countable family of intervals $\{(q_1^{(i)},
q_2^{(i)})\}_i$ covering $[\alpha,\beta]\setminus \{0\}$ and a
sequence of corresponding numbers $\{d_i\}_i$. Defining the
measure
\[
\nu \eqdef \sum_{i} 2^{-i} \nu_{d_i}
\]
we obtain
\[
\underline{d}_\nu(x) \le \inf_{i\ge1} \underline{d}_{\nu_{d_i}}(x) \le
\max_{\alpha\le q \le \beta} F(q) +\varepsilon,
\]
where the second inequality follows from~\eqref{ala1}. Applying
the Frostman lemma we obtain that
\[
\dim_{\rm H}\ccL(\alpha, \beta) \le \max_{\alpha \le q \leq \beta}
F(q) +\varepsilon
\]
Since $\varepsilon$ can be chosen arbitrarily small, this finishes
the proof of the first claim. The second claim was already proved
in Lemma~\ref{lem:finit1}.
\end{proof}

Note that for every $q\in[\alpha,\beta]$, $q>0$, we have
$\cL(\alpha,\beta)\subset \widehat\cL(q,q)$, which readily proves
the following result.

\begin{corollary}\label{cor:1}
    Under the hypotheses of Proposition~\ref{prop:1}, we have
    \[
    \dim_{\rm H}\cL(\alpha,\beta) \le \min_{\alpha\le q \le \beta}F(q)
    =\min\{F(\alpha),F(\beta)\}.
    \]
    In particular, for every $\alpha>0$, we have
    \[
    \dim_{\rm H}\cL(\alpha) \le F(\alpha).
       \]
\end{corollary}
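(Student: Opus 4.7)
The plan is to reduce the corollary directly to Proposition~\ref{prop:1} by means of a trivial inclusion of level sets, and then to identify the infimum over the subinterval with the endpoint minimum using the concavity of $F$. No new machinery is required beyond what has already been established.

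First I note that for every $q\in[\alpha,\beta]$ with $q>0$ the inclusion
\[
\cL(\alpha,\beta)\subset\ccL(q,q)
\]
holds. Indeed, any $x\in\cL(\alpha,\beta)$ satisfies $\underline\chi(x)=\alpha\le q\le\beta=\overline\chi(x)$ together with $\overline\chi(x)=\beta\ge q>0$, which are exactly the three defining conditions of $\ccL(q,q)$ in~\eqref{cl}. Applying Proposition~\ref{prop:1} with equal endpoints $\alpha=\beta=q$ then gives $\dim_{\rm H}\ccL(q,q)\le F(q)$, whence
\[
\dim_{\rm H}\cL(\alpha,\beta)\le F(q).
\]
Taking the infimum over all admissible $q$ yields
\[
\dim_{\rm H}\cL(\alpha,\beta)\le\inf_{q\in[\alpha,\beta],\,q>0}F(q)=\min_{\alpha\le q\le\beta}F(q),
\]
where the last equality uses upper semi-continuity of $F$ (so the infimum is attained).

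It remains to identify this minimum with $\min\{F(\alpha),F(\beta)\}$. Here I invoke the concavity of $F$ on $[\alpha^-,\alpha^+]$, which is the standard concavity property of the Lyapunov multifractal spectrum: the Legendre-type expression $\alpha F(\alpha)=\inf_{d\in\bR}(P(\varphi_d)+d\alpha)$ is concave in $\alpha$ as an infimum of affine functions, and the corresponding dimension spectrum inherits concavity from the Legendre pair structure recalled in the introduction. A concave function on an interval has no interior local minimum, so its minimum on any closed subinterval is attained at one of the endpoints, giving the desired equality. The ``in particular'' clause is just the special case $\alpha=\beta$. The one point that deserves care is the endpoint reduction, since concavity of $\alpha\mapsto\alpha F(\alpha)$ does not by itself formally imply concavity of $F$; however, in the present setting concavity of $F$ follows from the concavity of the pressure and is a classical feature of the spectrum, so the argument goes through.
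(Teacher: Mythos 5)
Your reduction is exactly the one the paper uses: the inclusion $\cL(\alpha,\beta)\subset\ccL(q,q)$ for every $q\in[\alpha,\beta]$, $q>0$, followed by Proposition~\ref{prop:1} with equal endpoints, is precisely the one-line observation preceding the corollary in the text. That part of your argument is complete and correct.

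The justification of the final equality $\min_{\alpha\le q\le\beta}F(q)=\min\{F(\alpha),F(\beta)\}$ is, however, flawed as written. You assert that $F$ itself is concave, after correctly noting that concavity of $\alpha\mapsto\alpha F(\alpha)$ does not formally imply it. In fact $F$ is \emph{not} concave in general: the Lyapunov dimension spectrum of a rational (or expanding) map can fail to be concave, so ``concavity of $F$ is a classical feature of the spectrum'' is not a claim you can lean on. What is true, and is all you need, is that $F$ is \emph{quasi-concave}: for any $c$ the superlevel set $\{q>0\colon F(q)\ge c\}=\{q\colon \inf_{d}(P(\varphi_d)+dq)-cq\ge 0\}$ is the superlevel set of the concave function $q\mapsto \alpha F(\alpha)\big|_{\alpha=q}-cq$, hence an interval. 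Taking $c=\min\{F(\alpha),F(\beta)\}$, both endpoints lie in this interval, so all of $[\alpha,\beta]$ does, giving $F(q)\ge\min\{F(\alpha),F(\beta)\}$ for every $q\in[\alpha,\beta]$; the reverse inequality is trivial. Replacing your appeal to concavity by this quasi-concavity argument closes the gap.
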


\subsection{The general case}\label{sec:upgen}

We now consider the general case that there are critical
points inside the Julia set.

We need two technical results from the literature. The first one is
the following telescope lemma from~\cite{Prz:90}. Recall the
definition of hyperbolic times given in Section~\ref{sec:hypinst}.

\begin{lemma} \label{telescope}
Given $\varepsilon>0$ and $\sigma>0$, there exist constants
$K_1>
0$ and $R_1>
0$ such that the following is true. Given $x\in J$ with upper
Lyapunov exponent $\overline\chi(x)>\sigma$, for every number
$r<R_1$, for every $n\ge1$ being a hyperbolic time for $x$ with
exponent $\sigma$, and
for every $0\le k\le n-1$ we have
    \[
    \diam f^{-n+k}_{f^k(x)}\left(B(f^n(x),r)\right) \le r\,K_1 e^{-(n-k)(\sigma -
\varepsilon)} .
    \]
\end{lemma}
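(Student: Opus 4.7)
The plan is to prove this by controlling the pullback orbit of the ball $B(f^n(x),r)$ step by step via almost-bounded distortion. Let me set $y_j \eqdef f^{n-j}(x)$ for $j=0,\ldots,n$ and denote by $W_j \eqdef f^{-j}_{y_j}(B(y_n,r))$ the pullback. The lemma then amounts to bounding $\diam W_{n-k}$.

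The hyperbolic time assumption immediately gives $\lvert(f^j)'(y_j)\rvert \ge e^{j\sigma}$ for every $1\le j\le n$. Therefore, if I can establish that each pullback map $f^j\colon W_j \to B(y_n,r)$ is univalent and has distortion bounded by $e^{\varepsilon j}$, then
\[
\diam W_j \le 2r \cdot e^{\varepsilon j} \cdot \lvert(f^j)'(y_j)\rvert^{-1} \le 2r\,e^{-j(\sigma-\varepsilon)},
\]
which gives the result with $K_1 = 2$. So the heart of the proof is the distortion estimate.

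To get the distortion estimate I would argue by backward induction on $k$ (equivalently, forward induction on $j=n-k$). At each induction step, one pulls back $W_{j-1}$ by a single branch of $f^{-1}$ at $y_j$. Provided $W_{j-1}$ avoids the critical set of $f$, Koebe's distortion lemma gives a local distortion bound of the form $1 + O\bigl(\diam W_{j-1} / \rho(y_j,\Crit)\bigr)$. Telescoping the product of these single-step bounds, the accumulated logarithmic distortion is controlled by a sum of the form $\sum_{j=1}^{n} \diam W_{j-1}/\rho(y_j,\Crit)$. If the inductive bound $\diam W_{j-1}\le r K_1 e^{-(j-1)(\sigma-\varepsilon)}$ holds and the orbit $y_j$ stays sufficiently far from $\Crit$ on the relevant scale, this geometric sum is uniformly small, so the total distortion is at most $e^{\varepsilon j}$ once $R_1$ is chosen small enough.

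The main obstacle, as in Przytycki's original argument, is to show that the pullback branches do not approach the critical set too rapidly: specifically, that at every intermediate time $n-j$, the diameter of $W_j$ is smaller than the distance $\rho(y_j,\Crit)$ by a margin sufficient to keep the Koebe-type estimate valid. This is handled by choosing $R_1$ so small that the initial ball contracts, under the hyperbolic time exponent $\sigma$, faster than $y_j$ can approach $\Crit$ in the obligatory closest encounters; a standard calibration using Mañé's shadowing/distortion techniques, together with the fact that $\overline\chi(x)>\sigma$ forces exponential contraction of the pullback chain, closes the induction and yields the stated $K_1$ and $R_1$ depending only on $\varepsilon$ and $\sigma$.
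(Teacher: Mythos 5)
The paper does not prove Lemma~\ref{telescope} at all: it is quoted as a known result from~\cite{Prz:90}, so there is no in-paper argument to compare yours against. Judged on its own, your proposal has a genuine gap at exactly the point you flag as "the main obstacle''. Your entire induction rests on every pullback component $W_j$ being disjoint from $\Crit$, so that each step is a univalent branch with Koebe-type distortion control. But the hyperbolic-time hypothesis only bounds the \emph{products} $\lvert(f^j)'(f^{n-j}(x))\rvert=\prod_{m=1}^{j}\lvert f'(f^{n-m}(x))\rvert$ from below; it gives no lower bound whatsoever on the individual distances $\rho(f^{n-j}(x),\Crit)$, and for an arbitrary $x$ with a hyperbolic time at $n$ the backward orbit can pass arbitrarily close to (and the components $W_j$ can actually contain) critical points. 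So no choice of $R_1$ depending only on $\varepsilon,\sigma$ can "calibrate'' the contraction to outrun the critical encounters, and the appeal to "Ma\~n\'e's shadowing/distortion techniques'' does not fill this in. Note also that the lemma is applied in the paper (proof of Proposition~\ref{prop:wander}) precisely to pullback components that \emph{do} meet critical points, so a proof restricted to univalent branches would not even cover the intended use; and the constant cannot be $K_1=2$ once critical passages are allowed.

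A correct argument must treat the critical case explicitly. The standard telescoping does this by splitting each step in two: if $\diam W_{j-1}$ is small compared with $\rho(y_j,c)^{d_c}$ for every critical point $c$ (with $y_j=f^{n-j}(x)$ and $d_c$ the local degree), the step is univalent and $\diam W_j\le C\,\diam W_{j-1}/\lvert f'(y_j)\rvert$; otherwise one uses $\diam W_j\le C(\diam W_{j-1})^{1/d_c}$ and checks that, in this regime, $(\diam W_{j-1})^{1/d_c}\le C'\,\diam W_{j-1}\cdot\rho(y_j,c)^{1-d_c}\asymp C'\,\diam W_{j-1}/\lvert f'(y_j)\rvert$, so the same one-step bound holds up to a multiplicative constant. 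Telescoping and invoking $\prod_{m\le j}\lvert f'(y_m)\rvert\ge e^{j\sigma}$ then gives $\diam W_j\le C^{N_j}\,2r\,e^{-j\sigma}$, and the remaining (nontrivial) accounting is to show that the number $N_j$ of non-univalent steps satisfies $N_j\log C\le j\varepsilon$ --- this uses the fact that each deep critical passage forces a very small factor $\lvert f'(y_m)\rvert$ into a product that is nonetheless bounded below by $e^{j\sigma}$ and above by $(\sup\lvert f'\rvert)^j$. This accounting, absent from your sketch, is where the hypotheses and the loss $e^{j\varepsilon}$ are actually used.
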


To formulate our second preliminary technical result we need the following construction,
see~\cite{GraSmi,PrzRivSmi:03,PrzRivSmi:04}.\\[0.2cm]
{\bf Pullback construction:} Fix some $n>0$ and let $y\in
J\setminus \bigcup_{i=1}^n f^i(\Crit)$. Fix some $R>0$ and let
$\{y_i\}_{i=1}^n$ be some backward trajectory of $y$, i.e. $y_0=y$
and $y_{i+1}\in f^{-1}(y_i)$ for every $i=1$, $\ldots$, $n-1$. Let
$k_1$ be the smallest integer for which
$f^{-k_1}_{y_{k_1}}(B(y,R))$ contains a critical point. For every
$\ell\ge 1$ let then $k_{\ell+1}$ be the smallest integer greater
than $k_\ell$ such that
$f^{-(k_{\ell+1}-k_\ell)}_{y_{k_{\ell+1}}}(B(y_{k_\ell},R))$
contains a critical point and so on. In this way, for each
backward branch $\{y_i\}_i$ we construct a sequence
$\{k_\ell\}_\ell$ that must have a maximal element not greater
than $n$. Let this element be $k$ and consider the set $Z$ of all
pairs $(y_k, k)$ built from all the backward branches of $f$ that
start from $y$. Let $N(y,n,R) \eqdef \# Z$. We have the following
estimate, see~\cite[Lemma 3.7]{PrzRivSmi:04} and~\cite[Appendix
A]{PrzRivSmi:03}.

\begin{lemma} \label{prs}
Given $\varepsilon > 0$, there exist  $K_2>0$ and  $R_2>0$ such
that for all $R\leq R_2$ we have
\[
N(y,n,R) < K_2 e^{n\varepsilon}
\]
uniformly in $y$ and $n$.
\end{lemma}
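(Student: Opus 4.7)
My plan is to parametrize each pair $(y_k,k)\in Z$ by its critical-return data, count the possibilities, and then show that for $R$ small the number of critical returns is sub-linear in $n$.

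To begin I attach to each $(y_k,k)\in Z$ the associated sequence of return times $0<k_1<\cdots<k_s=k$ and of critical points $c_{j_1},\ldots,c_{j_s}\in\Crit$ that are caught in the successive pullback discs $W_\ell\eqdef f^{-(k_\ell-k_{\ell-1})}_{y_{k_\ell}}(B(y_{k_{\ell-1}},R))$. By the minimality of $k_\ell$ each $W_\ell$ contains exactly the single critical point $c_{j_\ell}$, hence $f^{k_\ell-k_{\ell-1}}\colon W_\ell\to B(y_{k_{\ell-1}},R)$ is a branched cover of degree at most $D\eqdef\max_{c\in\Crit}\deg_c f$. Thus $y_{k_\ell}$ is one of at most $D$ preimages of $y_{k_{\ell-1}}$ inside $W_\ell$, while the intermediate points $y_{k_{\ell-1}+1},\ldots,y_{k_\ell-1}$ are forced by forward iteration along the univalent layers $f^i(W_\ell)$. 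Writing $p\eqdef\#\Crit$ and summing over schedules and critical-point sequences yields
\[
N(y,n,R)\le\sum_{s=0}^{s_{\max}(n,R)}\binom{n}{s}(pD)^s,
\]
where $s_{\max}(n,R)$ is the largest number of critical returns realized by any backward branch of length $n$.

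The heart of the proof is to show that for every $\varepsilon'>0$ one can pick $R_2$ so small that $s_{\max}(n,R)\le\varepsilon' n$ whenever $R\le R_2$. For a single gap, the intermediate pullback $f(W_\ell)$ is univalent over $B(y_{k_{\ell-1}},R)$, so Koebe distortion applied on a slightly enlarged disc (whose pullback remains univalent by the telescope Lemma~\ref{telescope}) bounds its diameter by $R/\lvert(f^{k_\ell-k_{\ell-1}-1})'(y_{k_\ell-1})\rvert$. Pulling back once more across $c_{j_\ell}$, where $f$ locally looks like $z\mapsto z^{\deg_{c_{j_\ell}}f}$, shows that $\diam W_\ell$ is of order a positive power of $R$; since $W_\ell$ must sit in a fixed injectivity neighbourhood of $c_{j_\ell}$, this forces the gap $k_\ell-k_{\ell-1}\ge L(R)$ with $L(R)\to\infty$ as $R\to 0$, and hence $s_{\max}(n,R)\le n/L(R)\le\varepsilon'n$.

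Finally, choosing $\varepsilon'$ so small that $H(\varepsilon')+\varepsilon'\log(pD)<\varepsilon$, where $H$ is the binary entropy, and applying the estimate $\binom{n}{\lfloor\varepsilon'n\rfloor}\le e^{nH(\varepsilon')}$ to the truncated sum completes the argument:
\[
N(y,n,R)\le(n+1)\,e^{nH(\varepsilon')}(pD)^{\varepsilon'n}\le K_2\,e^{n\varepsilon}.
\]
The main obstacle is the sparsity step: without uniform expansion between critical returns, the gap lower bound $L(R)\to\infty$ must be extracted by a delicate combination of Koebe control on the univalent intermediate layers and the ramified local behaviour of $f$ at the critical point, which is precisely the role the telescope lemma plays.
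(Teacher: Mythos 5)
Your combinatorial skeleton---encoding each pair $(y_k,k)$ by the schedule of critical return times, the critical points caught at each return, and one of boundedly many preimage choices across each critical passage, then summing $\binom{n}{s}(pD)^s$ over the number $s$ of returns and finishing with the binomial entropy bound---is indeed the structure of the argument the paper relies on (note the paper does not prove this lemma itself; it cites \cite[Appendix A]{PrzRivSmi:03} and \cite{PrzRivSmi:04}). The genuine gap is exactly where you locate the ``heart of the proof'': the bound $s\le\varepsilon'n$. Your derivation of the gap lower bound does not work. Lemma~\ref{telescope} concerns pullbacks at hyperbolic times of points with positive upper Lyapunov exponent and gives nothing for an arbitrary backward branch; with no expansion hypothesis the factor $\lvert(f^{k_\ell-k_{\ell-1}-1})'(y_{k_\ell-1})\rvert$ may be arbitrarily small, so your diameter estimate for $f(W_\ell)$ is vacuous; and even granting $\diam W_\ell\sim R^{1/D}$, no lower bound on the gap follows---a pullback of diameter $\sim R^{1/D}$ inside an injectivity neighbourhood of $c_{j_\ell}$ is perfectly consistent with $k_\ell-k_{\ell-1}=1$. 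Worse, the intermediate claim itself is false in general: if a critical orbit meets, or merely passes within $O(R)$ of, another critical point, say $f^m(c')$ is close to $c$ with $c,c'\in\Crit$ and $m$ small, then a pullback centred near $c$ catches $c'$ after exactly $m$ steps no matter how small $R$ is, so gaps do not tend to infinity with $1/R$.

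The correct route to sparsity is different. First invoke the standard fact (\cite[Lemma 2.2]{PrzRivSmi:03}; this, not Koebe, is the real geometric input, since the pullbacks are not univalent) that for every $\delta>0$ there is $R>0$ such that every connected component of $f^{-m}(B(x,R))$ with $x\in J$ has diameter $<\delta$, uniformly in $m$. Hence $y_{k_\ell}$ is $\delta$-close to $c_\ell$, while $f^{m_{\ell+1}}(c_{\ell+1})\in B(y_{k_\ell},R)$, so $\rho\bigl(f^{m_{\ell+1}}(c_{\ell+1}),c_\ell\bigr)\le R+\delta$. Fixing $M$ and taking $R+\delta$ smaller than the minimum of $\rho(f^m(c'),c)$ over $c,c'\in\Crit$, $1\le m\le M$ with $f^m(c')\ne c$, one concludes that either $m_{\ell+1}>M$ or the exact critical relation $f^{m_{\ell+1}}(c_{\ell+1})=c_\ell$ holds. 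Such exact relations cannot cycle (a periodic critical point is superattracting, hence not in $J$, while for small $R$ all caught critical points lie in $J$), so short gaps occur at most $\#\Crit$ times in succession and $s\le(\#\Crit+1)(n/M+1)$. With this replacement for your sparsity step, the rest of your counting and the entropy estimate go through.
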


Recall the definition of conical points in
Section~\ref{sec:hypinst}. Using the above two lemmas we can now
show the following result.

\begin{proposition}\label{prop:wander}
The set of points $x\in J$ that are not conical and satisfy $\overline\chi(x)>0$
  has Hausdorff dimension zero.
\end{proposition}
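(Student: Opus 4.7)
The plan is to reduce the claim to showing that, for each fixed $\sigma>0$, the set
\[
Y_\sigma \eqdef \{x\in J:\overline\chi(x)>\sigma,\ x\text{ not conical}\}
\]
has Hausdorff dimension zero; the whole set in question is then the countable union $\bigcup_{k\ge 1}Y_{1/k}$. Fix $\sigma>0$ and a small $\varepsilon>0$, and choose $R>0$ smaller than both the constant $R_1$ from Lemma~\ref{telescope} (with parameters $\sigma/2,\varepsilon$) and the constant $R_2$ from Lemma~\ref{prs} (with parameter $\varepsilon$). Cover $J$ by finitely many balls $\{B(y_j,R/4)\}_{j=1}^M$.

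For every $x\in Y_\sigma$, the Pliss lemma produces infinitely many hyperbolic times with exponent $\sigma/2$. The key observation is that non-conicality of $x$ forces, at all but finitely many of these times $n$, the set $f^{-n}_x(B(f^n(x),R))$ to fail to be univalent, i.e.\ to contain a critical point. Indeed, univalence of this pullback and Koebe's distortion theorem applied to $B(f^n(x),R/2)$ would yield a sequence of neighborhoods of $x$ witnessing conicality with $r=R/2$. Consequently, the backward branch from $f^n(x)$ to $x$ is a ``critical'' super-branch in the pullback construction applied to a cover point $y_j$ with $f^n(x)\in B(y_j,R/4)$, and hence corresponds to one of the at most $K_2 e^{n\varepsilon}$ terminal pairs produced by Lemma~\ref{prs}. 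Simultaneously, the telescope lemma bounds the diameter of the component of $f^{-n}(B(f^n(x),R))$ containing $x$ by $R K_1 e^{-n(\sigma/2-\varepsilon)}$.

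Combining these bounds, at each hyperbolic scale $n$ the set $Y_\sigma$ is covered by at most $M K_2 e^{n\varepsilon}$ sets of diameter at most $R K_1 e^{-n(\sigma/2-\varepsilon)}$. Summing the $s$-th powers of diameters over large hyperbolic times $n\ge N$ yields a geometric tail of ratio $e^{\varepsilon - s(\sigma/2-\varepsilon)}$, which tends to zero as $N\to\infty$ whenever $s>\varepsilon/(\sigma/2-\varepsilon)$. Hence $\mathcal H^s(Y_\sigma)=0$ for every such $s$, and letting $\varepsilon\to 0$ gives $\dim_{\rm H}Y_\sigma=0$, completing the reduction.

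The main obstacle I anticipate is making precise the bridge between the pullback construction --- which bounds the number of critical super-branches from a \emph{fixed} starting point $y_j$ --- and the component of $f^{-n}(B(f^n(x),R))$ containing the varying non-conical point $x$ with $f^n(x)\in B(y_j,R/4)$. One must verify that non-conicality really does restrict the backward branch to lie among the sub-exponentially many critical super-branches rather than among the potentially $d^n$ clean ones, and that the telescope estimate transfers to the relevant (possibly non-univalent) component; both steps rely on the hyperbolic time property at $x$ controlling derivatives of $f^n$ regardless of critical points inside the component.
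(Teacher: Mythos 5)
Your overall strategy --- Pliss/hyperbolic times, the pullback construction and Lemma~\ref{prs} to count branches meeting critical points, the telescope lemma for diameters, then a covering argument --- is the same as the paper's, but the step you yourself flag as ``the main obstacle'' is a genuine gap, and it is exactly where the paper has to do real work. Lemma~\ref{prs} bounds the number of \emph{terminal pairs} $(y_k,k)$, where $k$ is the depth of the \emph{last} critical encounter along a backward branch of the fixed center $y_j$; it does not bound the number of depth-$n$ preimages. A single terminal pair $(y_k,k)$ admits up to $(\deg f)^{\,n-k}$ critical-point-free continuations down to depth $n$, each containing a different candidate $x$, so knowing only that the branch through $x$ is ``critical'' somewhere gives a count of order $K_2e^{n\varepsilon}(\deg f)^{\,n-k}$, which is useless unless $k$ is forced to be close to $n$. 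Non-conicality at time $n$ alone does not force this: it tells you the pullback of $B(f^n(x),R)$ all the way down to $x$ meets $\Crit$ \emph{somewhere}, but says nothing about how deep the last such encounter is. Hence your claim that $Y_\sigma$ is covered at scale $n$ by at most $MK_2e^{n\varepsilon}$ sets is not justified as written.

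The paper closes this gap by stratifying. It fixes $m$ and works with the set $G(m,\sigma)$ of points for which the pullback of $B(f^{n'}(x),2r)$ down to $x$ meets a critical point for \emph{every} $n'>m$ (every non-conical point with $\overline\chi>\sigma$ lies in some $G(m,\sigma)$, but the threshold $m$ depends on $x$, so uniformity is bought with a countable union at the end). For $x\in G(m,\sigma)$ and $n$ a hyperbolic time, the telescope lemma gives $\rho(f^{n-k}(x),y_k)\le rK_1$, hence $B(f^{n-k}(x),2r)\subset B(y_k,R)$; since the continuation of the branch below depth $k$ is critical-point-free by maximality of $k$, so is the pullback of $B(f^{n-k}(x),2r)$ down to $x$, and the defining property of $G(m,\sigma)$ applied at the intermediate time $n'=n-k$ forces $n-k\le m$. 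This turns the exponential loss $(\deg f)^{\,n-k}$ into the constant $(\deg f)^m$, yielding at most $LK_2e^{n\varepsilon}(\deg f)^m$ covering sets of diameter at most $8rK_1e^{-n(\sigma-\varepsilon)}$ and thus $\dim_{\rm H}G(m,\sigma)\le\varepsilon/(\sigma-\varepsilon)$; the proposition follows after unions over $m$ and $\sigma=1/n$. Your argument needs this stratification and the intermediate-time step; without them the counting does not close.
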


\begin{proof}
Let us choose some numbers $\sigma >0$ and $\varepsilon>0$. Let
\[
r\eqdef\frac 1 {2K_1+4}\min\{R_1,R_2\},
\]
where $K_1$ and $R_1$ are constants given by Lemma~\ref{telescope}
and where $R_2$ is given by Lemma~\ref{prs}.

We can choose a finite family of balls $\{B_i\}_{i=1}^L$ of radius
$3r$ such that any ball of radius $2r$ intersecting $J$ must be
contained in one of the balls $B_i$. In the case that we can prove
existence of a sequence $\{n_i\}_i$ such that
$f^{-(n_i-k)}_{f^k(x)}(B(f^{n_i}(x), 2r))$ does not contain
critical points for any $0\le k\le n_i-1$,  the
Koebe distortion lemma will imply that $x$ is a conical point for $r$,
$\{n_i\}_i$, and $U_i= f^{-n_i}_x(B(f^{n_i}(x),r))$.

Let $G(m,\sigma)$ be the set of points $x\in J$ with upper
Lyapunov exponent greater than $\sigma$ for which with $r$ chosen
above for all $n>m$ the backward branch of $f^{-n}$ from ball
$B(f^n(x),2r)$ onto a neighborhood of $x$ will necessarily meet a
critical point, that is, for some $0\le k\le n-1$ we have
\[
f^{-(n-k)}_{f^k(x)}(B(f^n(x), 2r))\cap\Crit \ne\emptyset.
\]

First we claim that $\dim_{\rm H}G(m,\sigma)=0$. Let us denote by
$G(m,\sigma, n)$ the subset of $G(m,\sigma)$ for which $n>m$ is a
hyperbolic time with exponent $\sigma$. Recall that
$\overline\chi(x)>\sigma>0$ implies that there exist infinitely
many hyperbolic times for $x$ with exponent $\sigma$. Hence, we
have
\begin{equation} \label{eqn:prs}
G(m,\sigma) = \bigcap_{m_0\geq m} \bigcup_{n>m_0} G(m, \sigma, n).
\end{equation}
Let $x\in G(m, \sigma, n)$. Let $B_j=B(y,3r)$ be the ball that
contains $B(f^n(x),2r)$. We will apply the ``pullback
construction" for Lemma~\ref{prs} to the point $y$, the numbers
$n$, $R=\frac 1 2 \min\{R_1,R_2\}$, and to the backward branch
$f^{-n}_x$. And let $k=\max_\ell k_\ell$ and $y_k$ be given by the
pullback construction (compare Figure~\ref{Fig.reia}).
\begin{figure}[h]
         \includegraphics[width=7cm]{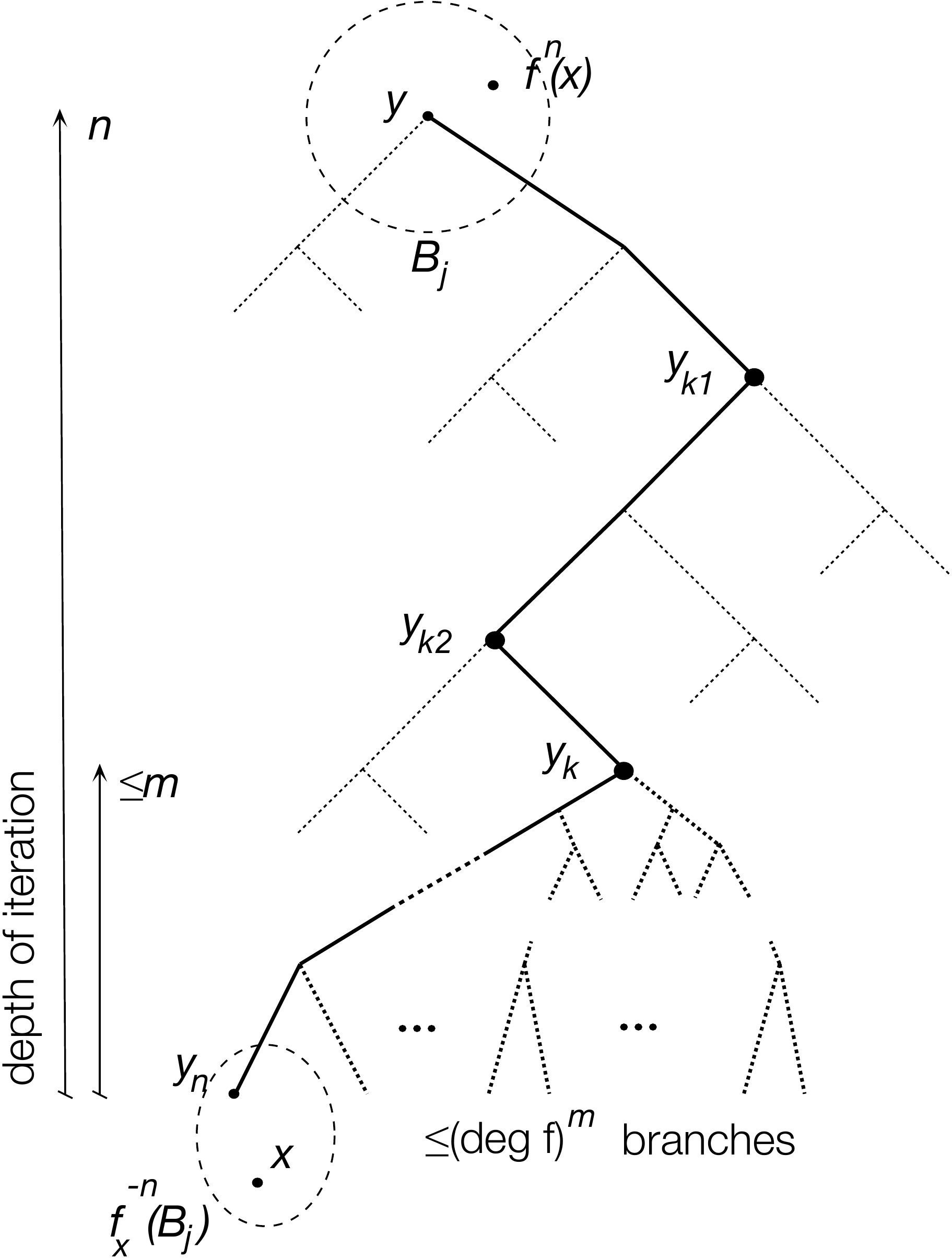}
    \caption{Pullback construction starting from the point $y$.}\label{Fig.reia}
 \end{figure}

We first note that Lemma \ref{telescope} and $\rho(f^n(x),y) \leq
r$ imply that
\[\rho(f^{n-k}(x),y_k) \leq r\,K_1.\]
This implies $B(f^{n-k}(x), 2r)\subset B(y_k,R)$ and hence $k\geq
n-m$ because $x\in G(m,\sigma)$. Thus, with fixed $n$ and $B_j$,
by Lemma~\ref{prs} the point $x$ must belong to one of at most
$K_2 e^{n\varepsilon} (\deg f)^m$ preimages of $B_j$. As
$B(y,3r)\subset B(x,4r)$ and $4r<R_1$, by Lemma \ref{telescope}
this pre-image of $B_j$ has diameter not greater than $8r\,K_1
e^{-n(\sigma - \varepsilon)}$.

Hence we showed that every point in $G(m,\sigma,n)$ belongs to
the $n$th pre-image of some ball $B_j$ along a backward branch for
that $k\ge m-n$ (where $k=\max_\ell k_\ell$ is as in the pullback
construction). Thus, by Lemma~\ref{telescope} the set $G(m,
\sigma, n)$ is contained in a union of at most $L K_2
e^{n\varepsilon} (\deg f)^m$ sets of diameter not greater than
$8r\,K_1 e^{-n(\sigma - \varepsilon)}$. Using those sets to cover
$G(m, \sigma, n)$ and applying~\eqref{eqn:prs}, we obtain
\[
\dim_{\rm H}G(m, \sigma) \leq \frac{ \varepsilon }{\sigma -
\varepsilon} .
\]
As $\varepsilon$ can be chosen arbitrarily small, the claim
follows.

Finally note that the set of points that we want to estimate in
the proposition is contained in the union
\[
\bigcup_{m=1}^\infty\bigcup_{n=1}^\infty G\big(m,\frac 1 n\big).
\]
Since for any set in this union its Hausdorff dimension is zero,
the assertion follows.
\end{proof}

Note that in the above proof we were able to show something more.
Namely notice that the choice of $r$ depended on $\sigma$ alone
(and not directly on the point $x$).

We are now prepared to prove the following estimate.

\begin{proposition}\label{prop:kjof}
Let $0<\alpha\leq \beta \leq \alpha^+$. We have
    \[
    \dim_{\rm H}\cL(\alpha,\beta) \le \max\left\{0,\max_{\alpha\le q\le \beta}
F(q)\right\}.
    \]
\end{proposition}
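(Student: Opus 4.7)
The strategy is to reduce to conical points and there apply a conformal-measure argument in the spirit of Proposition~\ref{prop:1}.

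Write $\cL(\alpha,\beta)=\cL(\alpha,\beta)_{\rm con}\cup\cL(\alpha,\beta)_{\rm ncon}$, splitting off the conical and the non-conical points of $\cL(\alpha,\beta)$. Since $\alpha>0$, every $x\in\cL(\alpha,\beta)$ satisfies $\overline\chi(x)\ge\alpha>0$, so Proposition~\ref{prop:wander} yields $\dim_{\rm H}\cL(\alpha,\beta)_{\rm ncon}=0$, and it suffices to bound $\dim_{\rm H}\cL(\alpha,\beta)_{\rm con}$.

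Now fix a conical point $x\in\cL(\alpha,\beta)_{\rm con}$, witnessed by $r=r(x)>0$, $K=K(x)<\infty$, and a sequence $\{n_i\}$ with $f^{n_i}(U_i)\supset B(f^{n_i}(x),r)$ and $\dist f^{n_i}|_{U_i}\le K$. Because $\underline\chi(x)=\alpha$ and $\overline\chi(x)=\beta$, the subsequence $\{\frac{1}{n_i}\log\lvert(f^{n_i})'(x)\rvert\}_i$ has limit inferior $\ge\alpha$ and limit superior $\le\beta$, hence is eventually bounded and admits a further subsequence converging to some $q=q(x)\in[\alpha,\beta]$, with $q\ge\alpha>0$. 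For each $d\in\bR$ select an $e^{P(\varphi_d)-\varphi_d}$-conformal measure $\nu_d$, positive on open sets intersecting $J$ (cf.\ Section~\ref{sec:conf}; this is where the standing non-exceptionality hypothesis enters when $d<0$). Put $V_i\eqdef f_x^{-n_i}(B(f^{n_i}(x),r))\subset U_i$. Bounded distortion on $U_i$ together with~\eqref{conf} yield
\[
\diam V_i\le 2rK\,\lvert(f^{n_i})'(x)\rvert^{-1},\qquad
\nu_d(V_i)\ge c(r)\,K^{-\lvert d\rvert}e^{-n_iP(\varphi_d)}\lvert(f^{n_i})'(x)\rvert^{-d},
\]
where $c(r)>0$ is the minimum of $\nu_d(B(y_j,r/2))$ over a finite $r/2$-cover $\{B(y_j,r/2)\}$ of $J$. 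Since $q>0$ we have $\diam V_i\to 0$, and taking logarithms of the two estimates above gives the pointwise bound
\[
\underline d_{\nu_d}(x)\le \frac{P(\varphi_d)}{q(x)}+d.
\]

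From here I reproduce the closing step of the proof of Proposition~\ref{prop:1}: for every $q_0\in[\alpha,\beta]$ and $\varepsilon>0$ I choose $d=d(q_0)$ so that on a small interval $(q_1,q_2)\ni q_0$ one has $\frac{1}{q}P(\varphi_d)+d\le F(q_0)+\varepsilon$ (or $\le -100$ when $F(q_0)=-\infty$), extract a countable subcover $\{(q_1^{(i)},q_2^{(i)})\}_i$ of $[\alpha,\beta]$ with associated exponents $\{d_i\}_i$, and set $\nu\eqdef \sum_i 2^{-i}\nu_{d_i}$. Since the local lower dimension of a probability measure is always nonnegative, the $\le -100$ option can never be realized, so the branch $F(q_0)=-\infty$ forces $\{x\in\cL(\alpha,\beta)_{\rm con}:q(x)\in(q_1^{(i)},q_2^{(i)})\}=\emptyset$; for every remaining $x$ the pointwise estimate above gives $\underline d_\nu(x)\le \max\{0,\max_{\alpha\le q\le\beta}F(q)\}+\varepsilon$. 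The standard consequence $\dim_{\rm H}\{y:\underline d_\nu(y)\le s\}\le s$ of Frostman's lemma, together with $\varepsilon\to 0$, concludes the proof.

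The main obstacle, which Proposition~\ref{prop:1} avoided via Lemma~\ref{lem:bdp}, is that the conical parameters $r(x)$ and $K(x)$ genuinely depend on $x$. This is harmless: they contribute only additive $O(1)$ terms to both $\log\nu_d(B(x,\diam V_i))$ and $\log\diam V_i$, which wash out in the limit $n_i\to\infty$ because $q(x)>0$, and the pointwise estimate $\underline d_{\nu_d}(x)\le P(\varphi_d)/q(x)+d$ goes through unchanged.
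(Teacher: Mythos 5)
Your proof is correct and follows essentially the same route as the paper: split off the non-conical points via Proposition~\ref{prop:wander}, extract for each conical point a subsequential limit $q(x)\in[\alpha,\beta]$ of the normalized derivatives along the conical times, run the conformal-measure estimate to get $\underline d_{\nu_d}(x)\le P(\varphi_d)/q(x)+d$, and conclude with the countable cover of $[\alpha,\beta]$, the measure $\nu=\sum_i 2^{-i}\nu_{d_i}$, and Frostman. You have in fact filled in the details that the paper only sketches (it says ``the rest of the proof is similar to the proof of Proposition~\ref{prop:1}''), including the correct observation that the constants $r(x)$, $K(x)$ only contribute $O(1)$ terms and that the $F(q)=-\infty$ branch is vacuous.
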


\begin{proof}
Let $\alpha$ and $\beta$ be like in the assumptions and let $x\in\cL(\alpha,\beta)$.
By Proposition~\ref{prop:wander} we can restrict our considerations to the case that $x$
is a conical point with corresponding number $r>0$, sequence $\{n_k\}_k$, and family of
neighborhoods $\{U_k\}_k$. Hence, there exist numbers $\delta>0$ and $K>1$ such
that
\begin{equation} \label{est2kj}
    2\delta\,\lvert(f^{n_k})'(x)\rvert^{-1} \, K^{-1}
    \le \diam f^{-n_k}_x(B(f^{n_k}(x),\delta))
    \le 2\delta \,\lvert(f^{n_k})'(x)\rvert^{-1} \, K .
\end{equation}
Choosing, if necessary, a subsequence of $\{n_k\}_k$, we can find
$q=q(x)\in [\alpha, \beta]$ for which we have
\begin{equation} \label{est2kjj}
\lim_{k\to\infty} \frac{1}{n_k}\log |(f^{n_k})'(x)| = q.
\end{equation}

Recall that for any $d\in\bR$ there exists a
$\exp\left(P(\varphi_d)-\varphi_d\right)$-conformal measure
$\nu_d$ that gives positive measure to any open set that
intersects $J$ (see Section~\ref{sec:conf}). Hence there exists a number 
$c_\delta>0$ for which for every $n_k$ we have $c_\delta\le
\nu_d(B(f^{n_k}(x),\delta))\le 1$. Using again  distortion
estimates, we can conclude that
\begin{multline}\label{huch2}
    c \,e^{-n_kP(\varphi_d)}K^{-1} \lvert (f^{n_k})'(x)\rvert^{-d} \\
\le \nu_d( f^{-n_k}_x(B(f^{n_k}(x),\delta)))\le
    e^{-n_kP(\varphi_d)}K \lvert (f^{n_k})'(x)\rvert^{-d}.
\end{multline}
The rest of the proof is similar to the proof of Proposition~\ref{prop:1}.
First we obtain that for every $d\in\bR$ we have
\[
\underline{d}_{\nu_d}(x) \le \frac{P(\varphi_d)}{q} +d.
\]
Choosing the right number $d$, we then conclude that
$\underline{d}_{\nu_d}(x) \le  F(q)$. We finish the proof
by constructing a measure such that for an arbitrarily small chosen number $\varepsilon$
and some number $q\in [\alpha,\beta]$ the lower pointwise dimension of that measure at
every $x\in
\cL(\alpha,\beta)$ is not greater than $F(q)+\varepsilon$.
\end{proof}

\begin{remark}
Notice that  in the general case, in which we do have critical points in $J$, for a point
$x\in J$ with $\underline{\chi}(x)< \overline{\chi}(x)$ we cannot apply the same
techniques as in Section~\ref{sec:nocrit}. In particular, in the above proof for each
point $x\in \cL(\alpha,\beta)$ we only know that there exists some number $q=q(x)\in
[\underline{\chi}(x), \overline{\chi}(x)]$ to which the Lyapunov exponents over some
subsequence of times $\{n_k\}_k$ will converge, while in Proposition~\ref{prop:1} we were
able to take an arbitrary number  $q$ in that interval. Hence, to show the following
result we can only consider the set $\cL(\alpha)=\cL(\alpha,\alpha)$ and not $\ccL(q,q)$
for an arbitrary number $q\in[\alpha,\beta]$, and the following corollary is weaker than
Corollary~\ref{cor:1}. However, the implications for the regular part of the spectrum
remain the same.
\end{remark}

Proposition~\ref{prop:kjof} and $\cL(\alpha)=\cL(\alpha,\alpha)$ readily imply the
following result.

\begin{corollary}
For $\alpha\in [\alpha^-, \alpha^+]\setminus\{0\}$ we have
\[
\dim_{\rm H} \cL(\alpha) \le F(\alpha).
\]
\end{corollary}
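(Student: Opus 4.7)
The plan is to reduce the statement directly to Proposition~\ref{prop:kjof}. Setting $\beta = \alpha$ in that proposition and using the definition $\cL(\alpha) = \cL(\alpha,\alpha)$, the range of $q$ in the maximum collapses to the single point $\{\alpha\}$, which immediately yields
\[
\dim_{\rm H}\cL(\alpha) \le \max\{0,\,F(\alpha)\}.
\]
The remaining task is then to verify that $F(\alpha) \ge 0$ for every $\alpha \in [\alpha^-,\alpha^+]\setminus\{0\}$, for this will eliminate the $0$ in the maximum and give the claimed bound.

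To establish the non-negativity of $F$, I would first treat $\alpha$ in the open interval $(\alpha^-,\alpha^+)$ using Proposition~\ref{decaf}, which gives $F(\alpha) = \sup_{m\ge 1} F_m(\alpha)$ for such $\alpha$. Each $F_m$ is the Legendre-type spectrum associated to a uniformly expanding Cantor repeller $\Lambda_m$; as recalled in the discussion preceding Theorem~\ref{main1}, on such repellers $F_m(\alpha)$ coincides with the Hausdorff dimension of a (possibly empty) level set and is in particular non-negative wherever it is finite. Hence $F(\alpha) \ge 0$ on the interior.

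A second, self-contained route uses the variational principle directly: for any ergodic $\mu \in \cM$ with $\chi(\mu) = \alpha$ we have $P(\varphi_d) + \alpha d \ge h_\mu(f) \ge 0$ for every $d\in\bR$, whence $\alpha F(\alpha) \ge 0$ and so $F(\alpha) \ge 0$ since $\alpha > 0$. Existence of such a $\mu$ for $\alpha \in (\alpha^-,\alpha^+)$ follows from the convexity (in fact, interval) structure of $\{\chi(\nu) : \nu \in \cM\}$ together with~\eqref{yu}; for $\alpha$ only approximately equal to some $\chi(\mu)$, a passage to the limit using the continuity of $d\mapsto P(\varphi_d)+\alpha d$ suffices.

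Finally, the boundary cases $\alpha = \alpha^\pm$ (when they lie outside $\{0\}$) follow from the same estimate together with~\eqref{alla}, or alternatively from the lower semicontinuity of $F$ that is implicit in the $\sup_m F_m$ representation. No serious obstacle is anticipated: the entire content has been absorbed into Proposition~\ref{prop:kjof} and Proposition~\ref{decaf}; the only point to be careful about is the non-negativity of $F$, which I address as above.
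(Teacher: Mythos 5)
Your proposal is correct and follows the same route as the paper, whose entire proof is the one-line observation that Proposition~\ref{prop:kjof} with $\beta=\alpha$ and $\cL(\alpha)=\cL(\alpha,\alpha)$ give the bound; you are in fact more careful than the paper, which silently discards the $\max\{0,\cdot\}$. Your variational-principle argument that $F(\alpha)\ge 0$ on $[\alpha^-,\alpha^+]\setminus\{0\}$ (for each fixed $d$, $P(\varphi_d)+\alpha d\ge h_{\mu_k}(f)+d(\alpha-\chi(\mu_k))\to$ a nonnegative quantity as $\chi(\mu_k)\to\alpha$) correctly closes that small gap without importing the exceptionality hypothesis of Proposition~\ref{decaf}; the only slip is the appeal to ``lower semicontinuity'' at the endpoints, where the relevant fact is the \emph{upper} semicontinuity of $\alpha\mapsto\alpha F(\alpha)$ as an infimum of affine functions --- but your limit argument already handles $\alpha=\alpha^{\pm}$ without it.
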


\section{Lower bounds for the dimension}\label{sec:4}

In this section we will derive lower bounds for the Hausdorff
dimension. We will either assume that $f$ is a non-exceptional map
or that $f$ is exceptional but $\Sigma\cap J=\emptyset$. Recall
that under those assumptions Proposition~\ref{decaf} is valid, so
we can approximate the pressure with respect to $f|_J$ with pressures that are
defined with respect to a sequence of Cantor repellers  $f^{a_m}|_{\Lambda_m}$ that were constructed in Section~\ref{sec:hypsubpress}.

One more case we would like to exclude is
$\alpha^-=\alpha^+=\alpha$. It is not very interesting because in
this case any measure supported on any hyperbolic set
$\Lambda_m\subset J$ has Lyapunov exponent $\alpha$. Hence we
automatically have
\[
\dim_{\rm H} \cL(\alpha) \geq \sup_{m\ge 1} F_m(\alpha)
\]
and the supremum on the right hand side is in this case equal to
$F(\alpha)$. Therefore, in the following considerations we will assume that
$\alpha^-<\alpha^+$.

\subsection{The interior of the spectrum}\label{sec:noche}

We use the  sequence of Cantor repellers to obtain for any exponent from the interior of the spectrum a big uniformly expanding subset of points with Lyapunov exponent $\alpha$ that provides us with an estimate from below. 

\begin{proposition} \label{jen}
    For $\alpha\in(\alpha^-,\alpha^+)$ we have $\dim_{\rm H}\cL(\alpha)\ge F(\alpha)$.
\end{proposition}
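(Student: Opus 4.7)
The plan is to deduce the lower bound from the classical multifractal formalism on uniformly expanding repellers, transported to $J$ via the approximating sequence of Cantor repellers $\Lambda_m$ furnished by Proposition~\ref{decaf}.

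Fix $\alpha\in(\alpha^-,\alpha^+)$. By~\eqref{alla}, for every sufficiently large $m$ we have $\alpha\in(\alpha_m^-,\alpha_m^+)$. I would regard $\Lambda_m$ as a uniformly expanding repeller for the dynamical system $f^{a_m}$ and apply the multifractal formula for such systems from~\cite{BarPesSch:97,Wei:99}. Since $S_{a_m}\varphi_d=-d\log\,\lvert(f^{a_m})'\rvert$, the $f^{a_m}$-Lyapunov spectrum on $\Lambda_m$ evaluated at the exponent $a_m\alpha$ yields, after the obvious rescaling by $a_m$ in both the exponent and the pressure,
\[
\dim_{\rm H}\Big\{x\in\Lambda_m\colon \lim_{n\to\infty}\frac{1}{n}\log\,\lvert(f^{a_m n})'(x)\rvert=a_m\alpha\Big\}=F_m(\alpha).
\]

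Next, I would check that the level set on the left is contained in $\cL(\alpha)$. Uniform expansion of $f^{a_m}$ on $\Lambda_m$ forces the compact set $\bigcup_{j=0}^{a_m-1}f^j(\Lambda_m)$ to be disjoint from $\Crit$, so $\log\,\lvert f'\rvert$ is bounded there. Writing $N=a_mn+r$ with $0\le r<a_m$, this bounds the contribution of $\log\,\lvert(f^r)'(f^{a_mn}(x))\rvert$ to $\log\,\lvert(f^N)'(x)\rvert$ by a constant independent of $n$, and hence convergence of $\frac{1}{n}\log\,\lvert(f^{a_m n})'(x)\rvert$ to $a_m\alpha$ upgrades to convergence of $\frac{1}{N}\log\,\lvert(f^N)'(x)\rvert$ to $\alpha$.

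Combining the two steps we obtain $\dim_{\rm H}\cL(\alpha)\ge F_m(\alpha)$ for every sufficiently large $m$; taking the supremum over $m$ and invoking~\eqref{fcon} then gives the desired bound $\dim_{\rm H}\cL(\alpha)\ge F(\alpha)$. The main subtlety in this plan is keeping careful track of the rescaling by $a_m$ between $f$- and $f^{a_m}$-pressures, entropies and Lyapunov exponents so that the classical formula applied on $\Lambda_m$ indeed yields $F_m(\alpha)$ in the correct normalization, and verifying that $a_m\alpha$ lies in the interior of the $f^{a_m}$-spectrum on $\Lambda_m$ --- precisely what~\eqref{alla} delivers. The passage from $f^{a_m}$- to $f$-Lyapunov regularity is comparatively routine once $\Crit$ has been seen to be avoided by the orbit of $\Lambda_m$.
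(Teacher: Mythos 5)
Your proposal is correct and follows essentially the same route as the paper: both fix $m$ large so that $\alpha\in(\alpha_m^-,\alpha_m^+)$, exploit the classical uniformly expanding theory on the Cantor repeller $f^{a_m}|_{\Lambda_m}$ to produce a subset of $\cL(\alpha)$ of dimension $F_m(\alpha)$ (the passage from $f^{a_m}$- to $f$-regularity being handled exactly as you describe, since the orbit of $\Lambda_m$ avoids $\Crit$), and then take the supremum over $m$ using Proposition~\ref{decaf}. The only cosmetic difference is that the paper does not quote the full multifractal formula as a black box but reruns its lower-bound half: it takes the equilibrium state $\mu_q$ for $\varphi_{q(\alpha)}$ with exponent $a_m\alpha$ and applies Ma\~n\'e's formula $\dim_{\rm H}\nu=h_\nu(f)/\chi(\nu)$, which yields the same $F_m(\alpha)$.
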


\begin{proof}
Let us consider the sequence of Cantor repellers $f^{a_m}|_{\Lambda_m}$ from Proposition~\ref{decaf}. By~\eqref{alla}, for each number
$\alpha^+>\alpha>\alpha^-$ there exists $m_0\ge 1$ such that
$\alpha_m^+>\alpha> \alpha_m^-$ for every $m\ge m_0$. Obviously we
have
\[
\dim_{\rm H}\cL(\alpha) \ge \sup_{m\ge 1}\dim_{\rm
H}\cL(\alpha)\cap\Lambda_m.
\]
Since $\Lambda_m$ is a uniformly expanding repeller with respect
to $f^{a_m}$, for any exponent $\alpha\in(\alpha_m^-,\alpha_m^+)$
there exists a unique number $q=q(\alpha)\in \bR$ such that
$\alpha = - \frac 1 {a_m}
\frac{d}{ds}P_{f^{a_m}|\Lambda_m}(\varphi_s)|_{s=q(\alpha)}$ and
an equilibrium state $\mu_q$ for the potential $\varphi_q$ (with
respect to $f^{a_m}|_{\Lambda_m}$) such that the Lyapunov exponent
of $\mu_q$ with respect to $f^{a_m}$ is equal to $a_m\alpha$
(compare the classical results in the introduction). For the
measure
\[
\nu_m=\sum_{i=0}^{a_m-1} \mu_m\circ f^i
\]
we have $\chi(\nu_m)=\alpha$. Hence, the variational principle
implies
\[\begin{split}
\max\Big\{ h_\nu(f)\colon &\nu\in\cM_{\rm E}\Big(\bigcup_i f^i(\Lambda_m)\Big),
\chi(\nu)=\alpha\Big\}\\
& \ge \frac 1 {a_m} P_{f^{a_m}|\Lambda_m}(S_{a_m}\varphi_q)+q\alpha \\
& \ge \inf_{d\in\bR} \left( \frac 1 {a_m}
P_{f^{a_m}|\Lambda_m}(S_{a_m}\varphi_d)+d\alpha \right) =
F_m(\alpha).
\end{split}\]
We obtain that $\dim_{\rm H}\nu=\frac{h_\nu(f)}{\chi(\nu)}$
whenever $\nu$ is an $f$-invariant ergodic Borel probability
measure with positive Lyapunov exponent~\cite{Man:88}. This
implies that for every $m\ge m_0$
\[
\dim_{\rm H}\cL(\alpha)\cap \bigcup_i f^i(\Lambda_m) \ge
\max\Big\{ \frac {h_\nu(f)} {\alpha}\colon \nu\in\cM_{\rm E}\Big(\bigcup_i
f^i(\Lambda_m)\Big), \chi(\nu)=\alpha\Big\}.
\]
From here the we can conclude that $\dim_{\rm H}\cL(\alpha)\ge
\sup_{m\ge 1}F_m(\alpha)$. Together with Proposition~\ref{decaf}
the statement is proved.
\end{proof}

\subsection{The boundary of the spectrum}\label{sec:clos}

Unfortunately, the above approach does not suffice to analyze the level sets for exponents from the boundary of the spectrum.
Our main goal in this section is to prove the following result. It will not only enable us to describe the boundary of the spectrum  but also provide us with dimension lower bounds for  level sets of irregular points.

\begin{theorem} \label{closing}
Let  $\{\Lambda_i\}_i$ be a sequence of subsets of $J$. We assume
that each $\Lambda_i$ is a uniformly expanding repeller for some
iteration $f^{a_i}$ and contains non-immediately postcritical points. 
Let $\{\phi_i\}_i$ be
a sequence of H\"older continuous potentials and let $\{\mu_i\}_i$
be  a sequence of equilibrium states for $\phi_i$ with respect to
$f^{a_i}|_{\Lambda_i}$.
    Then
    \[
    \dim_{\rm H}\left\{x\in J\colon
    \underline\chi(x)=\liminf_{i\to\infty}\chi(\mu_i),\,
    \overline\chi(x) = \limsup_{i\to\infty}\chi(\mu_i)\right\}
    \ge \liminf_{i\to\infty} \dim_{\rm H}\mu_i
    \]
    and
    \[
    \dim_{\rm P}\left\{x\in J\colon
    \underline\chi(x)=\liminf_{i\to\infty}\chi(\mu_i),\,
    \overline\chi(x) = \limsup_{i\to\infty}\chi(\mu_i)\right\}
    \ge \limsup_{i\to\infty} \dim_{\rm H}\mu_i.
    \]
\end{theorem}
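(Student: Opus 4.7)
The plan is to construct a Moran-type Cantor subset $X\subset J$ carrying a measure $\nu$ so that for $\nu$-a.e.\ $x$ one has $\underline{\chi}(x)=\liminf_i\chi(\mu_i)$, $\overline{\chi}(x)=\limsup_i\chi(\mu_i)$, $\underline d_\nu(x)\ge \liminf_i\dim_{\rm H}\mu_i$, and $\overline d_\nu(x)\ge \limsup_i\dim_{\rm H}\mu_i$. The two estimates of the theorem then follow: the Hausdorff bound from the Frostman mass distribution principle and the packing bound from its packing-dimension analogue. The construction is a direct generalisation of the w-measure of~\cite{GelRam:}, to which we refer for the measure-theoretic framework.

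As each $\Lambda_i$ contains non-immediately postcritical points, the construction in the proof of Lemma~\ref{bridge} yields, for each $i$, an integer $b_i$ and a univalent branch of $f^{-b_i}$ (a ``bridge'') that maps a small disk around a chosen point of $\Lambda_{i+1}$ into a small disk around a chosen point of $\Lambda_i$ with bounded distortion. Choose integers $N_k\nearrow\infty$ so rapidly that
\[
T_{k-1}\eqdef\sum_{j<k}\bigl(N_ja_j+b_j\bigr)=o(N_ka_k) \quad\text{as }k\to\infty,
\]
and let $X\subset J$ be the set of those $x$ whose $f$-orbit spends $N_ka_k$ consecutive iterations shadowing an $f^{a_k}|_{\Lambda_k}$-cylinder of depth $N_k$, followed by $b_k$ iterations along the $k$-th bridge, for every $k\ge 1$. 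Uniform expansion on each $\Lambda_k$ and bounded distortion of the bridges make $X$ a Cantor set with a natural symbolic coding. Define $\nu$ on $X$ by distributing the mass of the $k$-th cylinder according to $\mu_k$ and applying Kolmogorov extension.

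For $\nu$-a.e.\ $x$, Birkhoff's ergodic theorem applied to the ergodic measure $\mu_k$ gives that the $k$-th block contributes $N_ka_k\,\chi(\mu_k)(1+o(1))$ to $\log\,\lvert(f^{T_k})'(x)\rvert$, while the Shannon--McMillan--Breiman theorem gives that the same block contributes $N_k h_{\mu_k}(f^{a_k})(1+o(1))$ to $-\log\nu(B(x,r_k))$, where $r_k\asymp\lvert(f^{T_k})'(x)\rvert^{-1}$ is the size of the cylinder at generation $T_k$ containing $x$. Since the choice of $N_k$ makes the $k$-th block dominate both sums, one obtains
\[
\frac{1}{T_k}\log\,\lvert(f^{T_k})'(x)\rvert\to\chi(\mu_k),\qquad
\frac{-\log\nu(B(x,r_k))}{-\log r_k}\to\frac{h_{\mu_k}(f^{a_k})/a_k}{\chi(\mu_k)}=\dim_{\rm H}\mu_k,
\]
the last equality by Ma\~n\'e's dimension formula on the uniformly expanding set $\Lambda_k$. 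Within the $(k{+}1)$-st block the partial average of $\log\,\lvert f'\rvert$ interpolates monotonically between $\chi(\mu_k)$ and $\chi(\mu_{k+1})$, so $\underline{\chi}(x)=\liminf_k\chi(\mu_k)$ and $\overline{\chi}(x)=\limsup_k\chi(\mu_k)$; taking $\liminf$ and $\limsup$ over $k$ of the local-dimension relation yields the required bounds on $\underline d_\nu$ and $\overline d_\nu$.

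The main technical obstacle is controlling $\nu(B(x,r))$ at \emph{arbitrary} scales $r$, not merely at the discrete cylinder scales $r_k$: one must show that a ball at a scale $r\in(r_{k+1},r_k)$ is sandwiched between cylinders of appropriate intermediate depth inside block $k{+}1$, so that the $\liminf$ and $\limsup$ local dimensions are genuinely attained along the subsequence $(r_k)$. The bridges, being finite univalent pullbacks of bounded length $b_i$, contribute only $O(1)$ per passage to $-\log\nu$ and to $\log\,\lvert(f^n)'\rvert$, and hence by the rapid growth of $N_k$ they disappear from both limits. These intermediate-scale comparisons form the technical heart of the argument and are carried out just as in the w-measure analysis of~\cite{GelRam:}.
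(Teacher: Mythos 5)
Your overall strategy coincides with the paper's: build bridges between consecutive repellers, form a Moran--Cantor set whose generation-$k$ blocks are deep cylinders for $f^{a_k}|_{\Lambda_k}$ followed by a short bridge, equip it with a w-measure that redistributes $\mu_k$-mass at level $k$, and conclude via the Frostman lemma and its packing analogue. The Lyapunov-exponent computation and the growth condition $T_{k-1}=o(N_ka_k)$ are also as in the paper.

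The gap is in your final paragraph. You correctly identify the control of $\nu(B(x,r))$ at intermediate scales as the technical heart, but then assert it is ``carried out just as in the w-measure analysis of~\cite{GelRam:}''. It is not: the paper states explicitly that the construction of~\cite{GelRam:} relies on a Markov structure that is unavailable here, and supplying the substitute is precisely the new content. Without a Markov partition, a ball $B(x,r)$ with $r\in(r_{k+1},r_k)$ is not sandwiched between nested cylinders; it can meet many level-$(k{+}1)$ components, and their total $\nu$-mass must be bounded. The paper's replacement is Lemma~\ref{shadowing}: from a set of large $\mu_k$-measure on which Birkhoff averages, the Jacobian and the pointwise dimension of $\mu_k$ are \emph{uniformly} controlled, one selects a family of inverse branches whose images of a fixed ball are pairwise separated by a definite fraction of their diameters, have $\mu_k$-measure comparable to $e^{-a_kn_kh_k}$, and --- crucially --- are such that any ball $B(y,r)$ meets a subfamily of total $\mu_k$-mass at most $K r^{d_k-\varepsilon_k}$. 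Pulling this last estimate back through the bounded-distortion branch of $f^{-m_k}$ gives $\nu(B(x,r))\le \Xi\,(r+D_k)^{d_{k+1}-\varepsilon_{k+1}}$ at intermediate scales, and one still must split into the cases $r\le D_k$ versus $r>D_k$ and $d_{k+1}>d_k$ versus $d_{k+1}\le d_k$ to extract the correct $\liminf$ and $\limsup$ of the local dimension. Relatedly, your appeal to Birkhoff and Shannon--McMillan--Breiman ``for $\nu$-a.e.\ $x$'' does not stand as written, since $\nu$ is not invariant and a.e.\ statements for $\mu_k$ do not transfer to $\nu$; the cylinders must be selected from an Egorov-type good set of large $\mu_k$-measure with uniform bounds, which is what the paper's conditions (C1)--(C4) accomplish.
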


We derive the following estimates for level sets that include exponents at the boundary
of the spectrum.

\begin{proposition}
    For $\alpha^-\leq \alpha<\beta\leq \alpha^+$ we have
    \[
    \dim_{\rm H}\widehat\cL(\alpha,\beta)
    \ge \max_{\alpha\le q\le\beta}F(q).
    \]
\end{proposition}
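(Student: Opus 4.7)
The key observation for the plan is the inclusion $\cL(q) \subset \widehat\cL(\alpha,\beta)$ valid for any $q \in [\alpha,\beta]$ with $q > 0$, since a regular point with exponent $q$ satisfies $\underline\chi = \overline\chi = q \in [\alpha,\beta]$ together with $\overline\chi > 0$. The strategy is therefore to pick $q^* \in [\alpha,\beta]$ attaining $F(q^*) = \max_{\alpha \leq q \leq \beta} F(q)$ (such a maximum exists because $q \mapsto qF(q)$ is an infimum of affine functions, hence upper semi-continuous on the compact interval $[\alpha,\beta] \subset [\alpha^-,\alpha^+]$) and to exhibit a subset of $\cL(q^*)$ of Hausdorff dimension at least $F(q^*)$.

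If $q^* \in (\alpha^-,\alpha^+)$ and $q^* > 0$, Proposition~\ref{jen} applies directly and yields $\dim_{\rm H}\cL(q^*) \geq F(q^*)$, and the inclusion above finishes the argument. If $q^* > 0$ but $q^* \in \{\alpha^-,\alpha^+\}$ lies at the boundary of the spectrum, we invoke Theorem~\ref{closing}. By Proposition~\ref{decaf}, we have a sequence of $f^{a_m}$-invariant uniformly expanding topologically transitive Cantor repellers $\Lambda_m$ satisfying the convergences~\eqref{micha}, \eqref{fcon}, and~\eqref{alla}. For each large $m$, using the classical Legendre-duality description of the spectrum on the uniformly expanding repeller $\Lambda_m$, we select a number $q_m \in (\alpha_m^-,\alpha_m^+)$ with $q_m \to q^*$ and the associated equilibrium state $\mu_m$ for the corresponding potential $\varphi_{d_m}$, whose normalized Lyapunov exponent equals $q_m$, so that $\dim_{\rm H}\mu_m = F_m(q_m)$. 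Choosing the $q_m$ appropriately we arrange $\dim_{\rm H}\mu_m \to F(q^*)$, and Theorem~\ref{closing} then produces a subset of $\{x \in J \colon \underline\chi(x) = \overline\chi(x) = q^*\} \subset \widehat\cL(\alpha,\beta)$ of Hausdorff dimension at least $\liminf_m \dim_{\rm H}\mu_m = F(q^*)$.

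The remaining case $q^* = 0$ can only arise when $\alpha = 0$ and the maximum is attained at $0$. Here we exploit the definition $F(0) = \lim_{q \to 0^+} F(q)$ to pick $q_n \in (0,\beta]$ with $F(q_n) \to F(0)$; each such $q_n$ is a positive value handled by the previous cases, so $\dim_{\rm H}\widehat\cL(\alpha,\beta) \geq \dim_{\rm H}\cL(q_n) \geq F(q_n)$, and letting $n \to \infty$ gives the bound. The main obstacle is the boundary case: when $q^* \in \{\alpha^-,\alpha^+\}$ is not attained inside any $(\alpha_m^-,\alpha_m^+)$, the joint convergence $\chi(\mu_m) \to q^*$ and $\dim_{\rm H}\mu_m \to F(q^*)$ requires care. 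One must use the Wijsman Legendre-duality convergence underlying Proposition~\ref{decaf} (so that $F_m$ converges to $F$ on the interior of the spectrum), together with a diagonal selection along sequences $q_m \to q^*$ chosen from $(\alpha_m^-,\alpha_m^+)$ approaching the spectrum boundary, to simultaneously realize both convergences before feeding the resulting sequence into Theorem~\ref{closing}.
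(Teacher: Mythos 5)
Your argument is correct and follows essentially the same route as the paper, which likewise combines the inclusion $\cL(q)\subset\widehat\cL(\alpha,\beta)$ with Proposition~\ref{jen} for interior exponents and appeals to Theorem~\ref{closing} otherwise. The only simplification worth noting is that your boundary and $q^*=0$ cases can be absorbed into the interior case: since $q\mapsto qF(q)$ is concave and upper semicontinuous, one has $\max_{\alpha\le q\le\beta}F(q)=\sup_{q\in(\alpha,\beta)}F(q)$, so Proposition~\ref{jen} applied to interior $q$ already yields the full bound.
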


\begin{proof}
The claimed estimate follows from Theorem~\ref{closing}. 

We can also observe that for every $q\in (\alpha,\beta)$ we have
$\ccL(\alpha,\beta)\supset \cL(q,q)=\cL(q)$. Hence we can apply
Proposition~\ref{jen} to derive $\dim_{\rm H}\cL(q) \ge F(q)$ and prove the claimed estimate.
\end{proof}

\begin{proposition}\label{coco}
    For $\alpha^-\le \alpha\le\beta\le\alpha^+$ we have
    \[
    \dim_{\rm H}\cL(\alpha,\beta)
    \ge 
    \min\{F(\alpha),F(\beta)\}.
    \]
\end{proposition}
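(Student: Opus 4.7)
The plan is to apply Theorem~\ref{closing} to a carefully interleaved sequence of equilibrium states whose Lyapunov exponents accumulate exactly at $\alpha$ and $\beta$. The case $\alpha = \beta \in (\alpha^-, \alpha^+)$ is already covered by Proposition~\ref{jen}, so I concentrate on the cases $\alpha < \beta$ and $\alpha = \beta \in \{\alpha^-, \alpha^+\}$ (the latter being outside the range of Proposition~\ref{jen}).

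First, I use Proposition~\ref{decaf} to produce suitable approximating measures. For any $\alpha' \in (\alpha^-, \alpha^+)$ and any $m$ large enough that $\alpha' \in (\alpha_m^-, \alpha_m^+)$, the classical theory on the uniformly expanding repeller $f^{a_m}|_{\Lambda_m}$ (recalled in the introduction) yields an equilibrium state $\mu_m^{\alpha'}$ for some potential $\varphi_{d(\alpha',m)}$ with $\chi(\mu_m^{\alpha'}) = \alpha'$ and $\dim_{\rm H}\mu_m^{\alpha'} = F_m(\alpha')$. Since $\log\,\lvert f'\rvert$ is H\"older on the uniformly expanding set $\Lambda_m$, this measure is admissible in Theorem~\ref{closing}. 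By~\eqref{fcon}, $F_m(\alpha') \to F(\alpha')$ as $m \to \infty$.

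Next, I choose two sequences $\{\nu_n^\alpha\}_n$ and $\{\nu_n^\beta\}_n$ of such equilibrium states with
\[
\chi(\nu_n^\alpha)\to\alpha,\quad \chi(\nu_n^\beta)\to\beta,\quad
\dim_{\rm H}\nu_n^\alpha\to F(\alpha),\quad \dim_{\rm H}\nu_n^\beta\to F(\beta).
\]
For $\alpha \in (\alpha^-, \alpha^+)$ I simply fix the exponent at $\alpha$ and let $m\to\infty$; if $\alpha = \alpha^-$ I first choose $\alpha_n' \searrow \alpha^-$ inside the open spectrum and then diagonalize in $m$, invoking~\eqref{alla} and the continuity of $F$ from the interior. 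The analogous construction is used at $\beta$. Provided $\alpha<\beta$, interleaving the two sequences into a single sequence $\{\mu_i\}$ gives $\liminf_i\chi(\mu_i)=\alpha$, $\limsup_i\chi(\mu_i)=\beta$, and $\liminf_i\dim_{\rm H}\mu_i=\min\{F(\alpha),F(\beta)\}$, so Theorem~\ref{closing} delivers the bound on $\cL(\alpha,\beta)$. The remaining case $\alpha=\beta\in\{\alpha^-,\alpha^+\}$ is handled by feeding a single approximating sequence into Theorem~\ref{closing}, for which $\liminf$ and $\limsup$ coincide.

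The main obstacle is the boundary behavior: I must know that $F(\alpha^-)$ and $F(\alpha^+)$ are one-sided limits of $F$ restricted to the open spectrum, so that they can be approached by equilibrium states on uniformly expanding subsystems of $J$ with exponents strictly inside $(\alpha^-, \alpha^+)$. This follows from the Legendre-pair structure of $d\mapsto P(\varphi_d)$ and $\alpha\mapsto \alpha F(\alpha)$ together with~\eqref{fcon} and~\eqref{alla} of Proposition~\ref{decaf}. Once this semi-continuity is in place, the rest reduces to the routine diagonal-and-interleaving argument outlined above.
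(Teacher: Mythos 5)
Your proposal is correct and takes essentially the same route as the paper: both construct an interleaved sequence of equilibrium states on uniformly expanding (sub)repellers whose Lyapunov exponents accumulate precisely at $\alpha$ and $\beta$ and whose dimensions tend to $F(\alpha)$ and $F(\beta)$, and then apply Theorem~\ref{closing}. Your version merely spells out details the paper leaves implicit, namely that the measures come from the Cantor repellers of Proposition~\ref{decaf} via the classical theory, and that the one-sided continuity of $F$ at $\alpha^\pm$ (from the Legendre structure) is what makes the boundary cases work.
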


\begin{proof}
    Since we assume $\alpha^-<\alpha^+$, there exists a sequence $\{\Lambda_i\}_i$ of
uniformly expanding repellers and a sequence  $\{\mu_i\}_i$ of equilibrium states for the
potential $-\log\,\lvert f'\rvert$ with respect to $f|_{\Lambda_i}$ such that  $\dim_{\rm
H}\mu_i=F(\chi(\mu_i))$ and $ \alpha=\lim_{i\to\infty}\chi(\mu_{2i})$ and
$\beta=\lim_{i\to\infty}\chi(\mu_{2i+1})$. Theorem~\ref{closing} then implies that
    \[
    \cL(\alpha,\beta)
     = \big\{x\in J\colon \underline\chi(x) = \alpha,\overline\chi(x) = \beta \big\}
    \ge \liminf_{i\to\infty} F(\chi(\mu_i)).
    \]
Since $\liminf_{i\to\infty} F(\chi(\mu_i)) \ge\min\{ F(\alpha),F(\beta)\}$, this proves
the claimed estimate.
\end{proof}

Recall that a point $x$ is said to be \emph{recurrent} if $f^{n_i}(x)\to x$
for some sequence $n_i\nearrow\infty$.

\begin{corollary}
Assume that $J$ does not contain any recurrent critical points of $f$, that
$f$ is non-exceptional, and that $F(0)\neq -\infty$. Then we have
\[
\dim_{\rm H} \cL(0) = \dim_{\rm H} J\, = F(0).
\]
\end{corollary}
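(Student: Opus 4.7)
The plan is to sandwich both claimed equalities using the lower bound from Proposition~\ref{coco} together with a Bowen-type upper bound on $\dim_{\rm H} J$. Write $d_0 \eqdef F(0) = \inf\{d\colon P(\varphi_d)=0\}$; by continuity of $d\mapsto P(\varphi_d)$ (Lipschitz, as noted in the proof of Proposition~\ref{decaf}) we have $P(\varphi_{d_0})=0$, and since $P(\varphi_0)=h_{\rm top}(f|_J)\ge 0$ while $P(\varphi_d)\to -\infty$ as $d\to\infty$, in fact $d_0\ge 0$.

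First I would observe that the hypothesis $F(0)\neq -\infty$ forces $\alpha^-=0$: if $\alpha^->0$, then $F(\alpha)=-\infty$ for every $\alpha\in(0,\alpha^-)$ by the definition of $[\alpha^-,\alpha^+]$, and hence $F(0)=\lim_{\alpha\to 0+}F(\alpha)=-\infty$, a contradiction. Since the degenerate case $\alpha^-=\alpha^+$ was excluded at the opening of Section~\ref{sec:4}, we may assume $0=\alpha^-<\alpha^+$. Applying Proposition~\ref{coco} with the choice $\alpha=\beta=0$ then immediately yields
\[
\dim_{\rm H}\cL(0)=\dim_{\rm H}\cL(0,0)\ge \min\{F(0),F(0)\}=F(0),
\]
which is the desired lower bound and is the easy half.

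For the matching upper bound I would show $\dim_{\rm H}J\le d_0$ using the $e^{P(\varphi_{d_0})-\varphi_{d_0}}$-conformal measure $\nu_{d_0}$ supplied by Section~\ref{sec:conf} (existence for $d_0\ge 0$ is automatic). Decompose $J=A_+\cup A_0\cup A_{-\infty}$ with $A_+=\{x\in J\colon \overline\chi(x)>0\}$, $A_0=\{x\in J\colon \overline\chi(x)\le 0,\,\chi(x)>-\infty\}$, and $A_{-\infty}=\cL(-\infty)$. On $A_+$, Proposition~\ref{prop:wander} discards a zero-dimensional set of non-conical points, and the conical distortion estimate from the proof of Proposition~\ref{prop:kjof}, evaluated at $d=d_0$, yields
\[
\underline d_{\nu_{d_0}}(x)\le \frac{P(\varphi_{d_0})}{q(x)}+d_0=d_0
\]
at every remaining conical $x$, whence $\dim_{\rm H}A_+\le d_0$ by the mass distribution principle. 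By Lemma~\ref{lem:finit2} together with the non-recurrent-critical-point hypothesis, $A_{-\infty}$ is contained in the countable union of backward orbits of critical points in $J$, and so is zero-dimensional. For the remaining piece $A_0$, I would invoke a parabolic/subexponential pullback argument in the spirit of~\cite{DenPrzUrb}, again applied to $\nu_{d_0}$, to conclude $\dim_{\rm H}A_0\le d_0$.

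The main obstacle I expect is the treatment of $A_0$: at points of zero upper Lyapunov exponent the conical pullback machinery of Section~\ref{sec:up} does not directly provide distortion control, so one must either import or separately establish the Bowen formula $\dim_{\rm H}J=d_0$ that is classical for parabolic-type rational maps without recurrent critical points. Once $\dim_{\rm H}J\le d_0$ is in hand, the chain
\[
F(0)\le \dim_{\rm H}\cL(0)\le \dim_{\rm H}J\le F(0)
\]
closes and all three quantities coincide, as claimed.
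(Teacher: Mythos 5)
Your overall strategy coincides with the paper's. The lower bound $\dim_{\rm H}\cL(0)\ge F(0)$ is obtained exactly as you do, from Proposition~\ref{coco} with $\alpha=\beta=0$ (your observation that $F(0)\neq-\infty$ forces $\alpha^-=0$, so that $0$ lies in the admissible range of that proposition, is a point the paper leaves implicit), and the upper bound is the Bowen-type formula $\dim_{\rm H}J=\inf\{d\colon P(\varphi_d)=0\}=F(0)$ for rational maps without recurrent critical points in $J$. The difference is that the paper obtains the latter in one line by citing \cite[Theorem 4.5]{MUsurv}, whereas you attempt to re-derive it from the machinery of Section~\ref{sec:up}. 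Your fallback --- ``import or separately establish the Bowen formula'' --- is precisely what the paper does, so the proof closes; but be aware that your sketched in-house derivation has genuine gaps beyond the $A_0$ piece you flag. In particular, your claim that $A_{-\infty}=\cL(-\infty)$ is contained in the countable union of backward orbits of critical points is not what Lemma~\ref{lem:finit2} gives: that lemma only controls $\cL(-\infty)$ when $J$ contains no critical point or exactly one, and the introduction explicitly states that the size of $\cL(-\infty)$ is unknown in general (non-recurrence of the critical points does not help here). Also, your trichotomy $A_+\cup A_0\cup A_{-\infty}$ does not cover points with $\underline\chi(x)<\overline\chi(x)\le 0$, for which $\chi(x)$ is undefined. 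None of this affects the final result once the cited theorem is invoked for the upper bound, which is the route the paper actually takes.
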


\begin{proof}
Proposition~\ref{coco} implies that $\dim_{\rm H} \cL(0) \geq
F(0)$. As $F(0)\ne-\infty$, the pressure function $d\mapsto
P(\varphi_d)$ is nonnegative and $F(0) = \inf\{d\colon
P(\varphi_d)=0\}$. Since $J$ does not contain any recurrent
critical points of $f$, we can apply~\cite[Theorem 4.5]{MUsurv}
and conclude that $\inf\{d\colon P(\varphi_d)=0\}=\dim_{\rm H}J$.
\end{proof}

To prove Theorem~\ref{closing}, we will construct a
sufficiently large set of points that have precisely the given
lower/upper Lyapunov exponent. Note that such a set will not ``be
seen" by any invariant measure in the non-trivial case that lower
and upper exponents do not coincide. Also points with Lyapunov
exponent zero, though Lyapunov regular, will not  ``be seen"  by
any interesting invariant measure.  Our approach is to show that
such a set is large with respect to some  necessarily
non-invariant probability measure. It is generalizing the
construction of so-called w-measures introduced in~\cite{GelRam:}
for which we strongly made use of the Markov structure that is
not available to us here.

\subsubsection*{A technical lemma}

In preparation for the proof of Theorem~\ref{closing} we start with some technical result that will be usefull shortly after.

\begin{lemma} \label{shadowing}
Let $g\colon \overline\bC\to \overline\bC$ be  a conformal map and $\Lambda\subset\overline\bC$ be a compact $g$-invariant hyperbolic topologically transitive set,  $\mu$ a
$g$-invariant ergodic measure on $\Lambda$ with Lyapunov exponent
$\chi=\chi(\mu)$, entropy $h=h_\mu(g)$, and Hausdorff dimension
$d=d(\mu)$. Let $V$ be an open set of positive measure $\mu$. If
$\gamma$ is small enough then for any $\varepsilon>0$ for
$\mu$-almost every point $v\in\Lambda$ there exist a number $K>0$ and a
sequence $\{n_i\}_i$ such that for each $n_i$ there is a set
$F_{n_i}\subset V\cap \Lambda$ such that for all $y_j\in F_{n_i}$
\begin{itemize}
    \item[i)] $g^{n_i}(y_j)=v$,
\vspace{0.1cm}
    \item[ii)] $K^{-1} \exp (m(\chi-\varepsilon)) <
        \vert(g^m)'(y_j)\rvert < K \exp (m(\chi+\varepsilon))$ for all $m\leq n_i$,
\vspace{0.1cm}
    \item[iii)] the branch $g^{-n_i}_{y_j}$ mapping $v$ onto $y_j$ extends to all
        $B(v,\gamma)$ and the distortion of the resulting map is bounded by $K$,
        \vspace{0.1cm}
    \item[iv)] we have
        \[
    \mu\left(\bigcup_{y_j\in F_{n_i}} g^{-n_i}_{y_j}(B(v,\gamma))\right)\ge K^{-1},
    \]
    \item[v)] for $j\ne k$ we have
    \[
    \rho\left(g^{-n_i}_{y_j}(B (v,\gamma)), g^{-n_i}_{y_k}(B (v,\gamma))\right)
     > K^{-1} \diam g^{-n_i}_{y_j}(B(v,\gamma)) ,
     \]
    \item[vi)] for any $x\in V$ and $r>0$ we have
        \[
        \mu\left(B(x,r)\,\cap\, \bigcup_{y_j\in F_{n_i}}
g^{-n_i}_{y_j}(B(v,\gamma))\right)
         \le Kr^{d-\varepsilon},
        \]
    \item[vii)] we have
        \[
    K^{-1}e^{-n(h+\varepsilon)}\le
    \mu\left(g^{-n_i}_{y_j}(B(v,\gamma))\right) \le
    e^{-n(h-\varepsilon)} .
    \]
\end{itemize}
\end{lemma}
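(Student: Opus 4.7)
The plan is to combine the Koebe distortion principle for conformal inverse branches of the uniformly expanding map $g|_\Lambda$ with the standard ergodic theorems for $\mu$ — Birkhoff, Shannon--McMillan--Breiman, Brin--Katok, and the dimension formula $d=h/\chi$ for conformal ergodic measures — worked on the natural extension of $(g,\mu)$ in order to transfer forward Birkhoff statistics into statistics along preimage trees of a well-chosen base point $v$. Fix a finite generating Markov partition $\mathcal{P}$ of $\Lambda$, and choose $\gamma$ smaller than the Lebesgue number of $\mathcal{P}$ divided by a uniform Koebe constant; then every inverse branch $g^{-n}_y$ extends univalently to $B(g^n(y),2\gamma)$ with uniformly bounded distortion. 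This alone delivers (iii), and combined with the disjointness of distinct Markov $n$-cylinders also delivers the separation (v).

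Given $\varepsilon>0$, a routine Egorov argument applied to Birkhoff for $\log\lvert g'\rvert$, Shannon--McMillan--Breiman for $\mathcal{P}$, Brin--Katok for balls, and the dimension formula yields a compact reservoir $G\subset\Lambda$ of measure at least $1-\mu(V)/4$ and an integer $N_0$ such that, uniformly for $z\in G$ and $n\ge N_0$,
\[
\left\lvert\tfrac{1}{n}\log\lvert(g^n)'(z)\rvert-\chi\right\rvert<\tfrac{\varepsilon}{2},\quad e^{-n(h+\varepsilon/2)}\le\mu(C_n(z))\le e^{-n(h-\varepsilon/2)},\quad \mu(B(z,r))\le r^{d-\varepsilon/2}\ (r\le r_0),
\]
where $C_n(z)$ is the Markov $n$-cylinder containing $z$. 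After replacing $V$ by $V\cap G$ (still of positive measure) we may assume $V\subset G$. Next, pass to the natural extension $(\hat\Lambda,\hat g,\hat\mu)$ with projection $\pi\colon\hat\Lambda\to\Lambda$, and apply Birkhoff to $\mathbf{1}_V\circ\pi$ under $\hat g^{-1}$: for $\hat\mu$-a.e.\ $\hat v$, the lower density of indices $k\ge 1$ with $\pi(\hat g^{-k}(\hat v))\in V$ equals $\mu(V)$. Pick such a $\hat v$ whose projection $v=\pi(\hat v)$ is also forward-generic for Birkhoff and Brin--Katok, let $\{n_i\}_i$ be the subsequence of indices $k$ along which $\pi(\hat g^{-k}(\hat v))\in V$, and set $F_{n_i}$ to be the set of preimages $y\in g^{-n_i}(v)\cap V$ whose forward orbit $\{g^m(y)\}_{m=0}^{n_i-1}$ lies in $G$ for at least a $(1-\varepsilon)$-fraction of times. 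The Rokhlin disintegration of $\hat\mu$ along the fibers of $\pi$ (equivalently, in the Gibbs setting of Theorem~\ref{closing}, the explicit weighting of preimages by $e^{S_n\phi-nP}$) provides a natural probability on preimages with respect to which $F_{n_i}$ has positive mass, and $\#F_{n_i}\asymp e^{n_i h}$ by Shannon--McMillan--Breiman.

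With these choices, (i) is immediate. Item (ii) holds because the forward orbit of each $y_j\in F_{n_i}$ lies in $G$ for most times, and the exceptional indices contribute only a multiplicative error absorbed into $K$. Item (vii) is the Shannon--McMillan--Breiman bound on $G$ transferred from $C_{n_i}(y_j)$ to $g^{-n_i}_{y_j}(B(v,\gamma))$ by bounded distortion. Item (iv) is the sum of (vii) over $\#F_{n_i}\asymp e^{n_i h}$. For (vi) we split on whether $r$ exceeds $\diam g^{-n_i}_{y_j}(B(v,\gamma))\asymp e^{-n_i\chi}$: in the large-scale regime every cylinder meeting $B(x,r)$ lies in $B(x,2r)$ and the Frostman bound $\mu(B(x,2r))\le(2r)^{d-\varepsilon/2}$ available for $x\in V\subset G$ closes the estimate; in the small-scale regime the separation (v) guarantees that $B(x,r)$ meets at most one cylinder and the estimate reduces to a single-cylinder bound. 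The principal obstacle is the ergodic-theoretic step in the second paragraph — the symmetric treatment of forward and backward Birkhoff statistics — which rests on the Rokhlin disintegration of $\hat\mu$ along the fibers of $\pi$; in the concrete applications to Theorem~\ref{closing} this step is replaced by the Gibbs weighting of preimages, so the work reduces there to standard large-deviation estimates for Gibbs measures on subshifts of finite type.
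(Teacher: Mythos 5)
There is a genuine gap at the heart of your second paragraph, namely in the derivation of property iv) (and of the count $\#F_{n_i}\asymp e^{n_ih}$ that feeds iv)). The backward Birkhoff theorem on the natural extension tells you that for $\hat\mu$-a.e.\ $\hat v$ the \emph{single} backward orbit $\{\pi(\hat g^{-k}\hat v)\}_k$ visits $V$ with frequency $\mu(V)$; at each such time $n_i$ this produces exactly one preimage $y\in g^{-n_i}(v)\cap V$. Property iv), however, requires the union of pullbacks over \emph{all} of $F_{n_i}$ to have $\mu$-measure bounded below uniformly in $i$; since each pullback has measure $\approx e^{-n_ih}$ by vii), you need on the order of $e^{n_ih}$ good preimages, i.e.\ a definite proportion of the fiber measure, not one branch. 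Your appeal to the Rokhlin disintegration of $\hat\mu$ over $\pi$ is exactly where this should be proved, but as written it is circular: the statement that, for the fixed point $v$ and for infinitely many $n$, the set of level-$n$ preimages lying in $V$ with good forward statistics carries conditional mass bounded below is the crux, and integrating the disintegration only gives this for an $n$-dependent positive-measure set of base points, so a further Fubini/Borel--Cantelli argument is needed to fix $v$. Likewise ``$\#F_{n_i}\asymp e^{n_ih}$ by Shannon--McMillan--Breiman'' presupposes the mass lower bound you are trying to establish. Finally, your fallback to the Gibbs weighting $e^{S_n\phi-nP}$ does not save the lemma as stated, which is for an arbitrary ergodic invariant measure on a hyperbolic set.

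The paper avoids the natural extension entirely and runs the argument forward in time, which is where the missing mass bound actually comes from. One fixes a slightly shrunk target $\widetilde U=B(v,\tfrac{1}{c_2+1}\gamma)$ and a good set $\Lambda'$ with $\mu(\Lambda')$ close to $1$ on which Birkhoff (for visits to $\widetilde U$ and for $\log|g'|$), the Rokhlin/Jacobian estimate, and the pointwise dimension bound hold uniformly for $n\ge N$. Taking maximal $(n,c\gamma)$-separated sets $E_n\subset\Lambda'$ and letting $F_n$ be those $z_j\in E_n\cap\widetilde V$ whose pullback of $U$ lands in $V$, one observes that every point of $\widetilde V\cap\Lambda'$ is covered by some $V_{\ell,j}=g^{-\ell}_{z_j}(\widetilde U)$ for a set of times $\ell<n$ of frequency at least $\mu(\widetilde U)-\varepsilon$; summing over $\ell$ and applying the pigeonhole principle yields infinitely many times $n_i$ with $\sum_j\mu(V_{n_i,j})$ bounded below, which is iv). The points $y_j=g^{-n_i}_{z_j}(v)$ then inherit ii), vi), vii) from the uniform estimates at $z_j\in\Lambda'$ via bounded distortion. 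Your treatment of iii), v), and the ambient Egorov setup is sound and close to the paper's; it is the replacement of this forward counting argument by a one-branch backward genericity statement that does not close.
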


\begin{proof}
As $g|_\Lambda$ is uniformly expanding, there exist numbers $\gamma>0$ and
$c_1>0$ such that for every $n\geq 1$, $y\in\Lambda$, $v=f^n(y)\in
\Lambda$, and a backward branch $g^{-n}_y$ mapping $v$ onto $y$
for every $x_1$, $x_2\in B(v,\gamma)$ we have
\begin{equation} \label{eqn:dist}
\frac{\lvert (g^{-n}_y)'(x_1)\rvert}{\lvert
(g^{-n}_y)'(x_2)\rvert}\le c_1,
\end{equation}
and in particular the mapping $g^{-n}_y$ extends to all of
$B(v,\gamma)$. Moreover, there is some constant $c_2>0$ not
depending on $n$ or $y$ such that
\[
\diam g^{-n}_y(B(v,\gamma)) \le c_2\gamma.
\]
We assume that $\gamma$ is so small that for any two points $x$,
$y$ with  distance from $\Lambda$ and mutual distance $<c_2 \gamma$, for any point $x'\in g^{-1}(x)$ there is at most one
point $y'\in g^{-1}(y)$ such that $\rho(x',y')\leq c_2 \gamma$. This is
true for any small enough number $\gamma$ because $\Lambda$ is in positive
distance from any critical point of $g$. Let $\widetilde V\subset
V$ be such that $B(\widetilde V, c_2 \gamma)\subset V$ and that
$\widetilde V$ is nonempty and of positive measure, which is
possible whenever $\gamma$ is small enough.

Notice that $B(v,\frac 1 {c_2+1}\gamma)$ has positive
$\mu$-measure for $\mu$-almost every $v$. Choose such point $v$
and let
\[
\widetilde{U}\eqdef B\left(v,\frac 1 {c_2+1}\gamma\right) ,\quad
U\eqdef B(v,\gamma).
\]
 Let
\[
\delta \eqdef \frac 1 2
\min\left\{\mu(\widetilde{U}), \mu(\widetilde V)\right\}.
\]

Let $\varepsilon>0$. There is a set of points $\Lambda'\subset
\Lambda$ with $\mu(\Lambda')>1-\delta$ (actually, it can
be chosen to have arbitrarily large measure) and a number $N>1$
such that: for every  $n\ge N$ and for every $x\in\Lambda'$  we
have
    \begin{itemize}
    \item [(C1)]
    $\displaystyle
    \left\lvert \frac{1}{n}\# \left\{k\in\{0,\ldots,n-1\}\colon
    g^k(x)\in \widetilde{U}\cap\Lambda \right\} - \mu(\widetilde{U}) \right\rvert\le
\varepsilon$,\\
    \vspace{0.1cm}
    \item     [(C2)]
    $\displaystyle
        \left\lvert \log \,\lvert (g^n)'(x)\rvert -n\chi\right\rvert \le n\varepsilon,
        $\vspace{0.2cm}
    \item [(C3)]
    $\displaystyle \left\lvert \log\, {\rm Jac}_\mu g^n(x)-nh\right\rvert\le
n\varepsilon$,
    \vspace{0.2cm}
    \item [(C4)]
 for $r\le\diam \Lambda$ we have
\[
\mu(B(x,r))  < c_3 r^{d-\varepsilon}.
\]
    \end{itemize}
Here (C1), (C2) simply follow from ergodicity,  (C3) is a
consequence of the Rokhlin formula and ergodicity, and (C4)
follows from the definition of the dimension $d(\mu)$.

We choose now a family $\{E_n\}_{n\geq N}$ of subsets of
$\Lambda'$ such that for every $n\ge N$ the set $E_n$ is a maximal
$(n,\frac {c_2} {c_2 +1}\gamma)$-separated subset of $\Lambda'$.
Given $n\ge N$, let $V_n\eqdef E_n\cap \widetilde{V}$. For each $n\ge
N$ let
\[
F_n\eqdef \left\{x\in V_n\colon g^{-n}_x(U)\subset V \right\}.
\]
For every $z_j\in F_n$ let $V_{n,j}\eqdef
g^{-n}_{z_j}(\widetilde{U})$. Obviously, all the sets $V_{n,j}$ 
are pairwise disjoint.

By (C1) the trajectory of every point from $\widetilde V \cap
\Lambda'$ visits $\widetilde{U}$ at some time $n\ge N$ (in fact,
at infinitely many times). Let $x$ be such a point and $n$ be such
a time, that is, $g^n(x)\in\widetilde U$. Because $E_n$ is
maximal, $x$ must be $(n,\frac {c_2}{c_2+1} \gamma)$-close to some
point $z_j\in E_n$. Hence we have
\[
g^n(z_j) \in B\left(g^n(x),\frac{c_2}{c_2+1} \gamma\right) \subset
U
\]
 and
 \[x\in V_{n,j}\subset
g^{-n}_{z_j}(U)\subset B(x, c_2 \gamma)\subset V.
\]
This shows that $z_j\in F_n$. This together with (C1) implies that
\[
\sum_{\ell=0}^{n-1}\mu\left(\bigcup_j V_{\ell,j}\right)
 \ge (\mu(\widetilde U)-\varepsilon) \, n\, \mu(\Lambda')
\]
and hence, as it is satisfied for all $n>N$, we obtain
\[
\sum_j\mu\left(V_{\ell,j}\right)
\ge \frac{1}{2}(\mu(\widetilde U)-\varepsilon)  \mu(\Lambda')
\]
for infinitely many $\ell\ge N$.
\begin{figure}[h]
         \includegraphics[width=5cm]{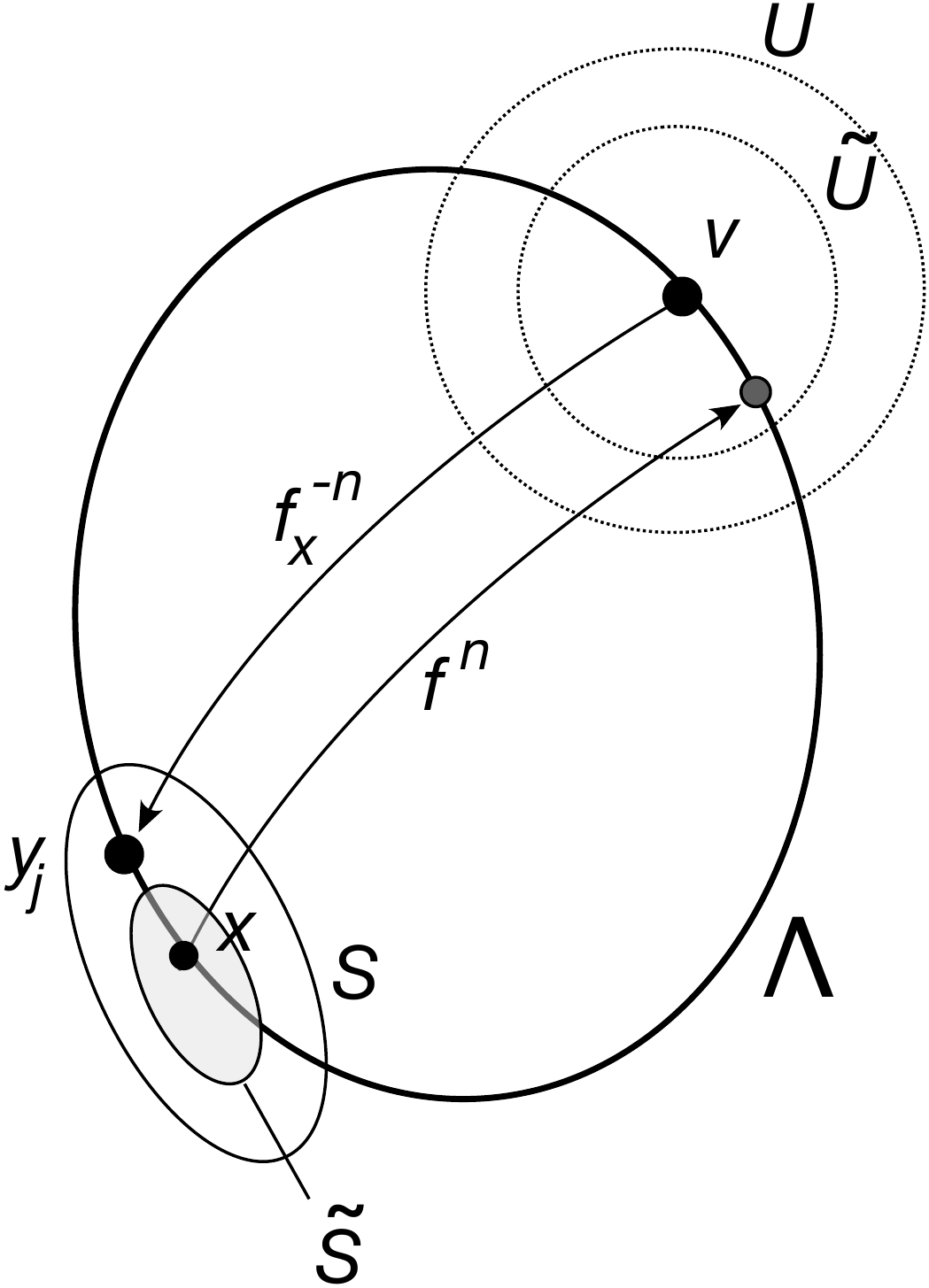}
    \caption{Finding $y_j$}\label{heute}
 \end{figure}
This gives us a sequence of times $n_i\eqdef \ell$ and points
$y_j=g^{-n_i}_{z_j}(v)$ for which the assertion of the lemma is
true. Indeed, $g^{-n_i}_{z_j}(U)$ and $g^{-n_i}_{z_k}(U)$ for
$j\neq k$ are disjoint because we are in a sufficiently large distance
from $\Crit$. We also know that those maps $g^{-n_i}_{z_j}|_U$
have uniformly bounded distortion. Recall that $U=B(v,\gamma)$,
which implies that $g^{-n_i}_{z_j}(B(v,\gamma/2))$ and
$g^{-n_i}_{z_k}(B(v,\gamma/2))$  for $j\neq k$ are not only
disjoint but in distance that is comparable to the sum of their
diameters and v) follows.
Further 
properties i) and iii) are checked directly from the construction. Moreover,
vi) follows from (C4), iv) follows from our choices of $\ell$, ii)
from (C2) and vii) from  iv) and (C3).
\end{proof}

\subsubsection*{Construction of  a Cantor set}

We now continue
with some preliminary constructions that will be needed in the proof of Theorem~\ref{closing}. 

As a first step,  we are going to construct ``bridges'' between the repellers $\Lambda_i$  (compare Figure~\ref{Fig.heute}). This is very similar to the proof of Lemma~\ref{bridge} though here we will not necessarily require that the repellers are disjoint. 
To fix some notation, let $\{(B_i,b_i)\}_i$ be a collection of \emph{bridges}, that
is, let
\[
B_i\eqdef B(z_i,r_i) \subset B(\Lambda_i,\gamma_i),
\]
where the numbers $r_i$, $b_i$, $\gamma_i$ and the points $z_i$ are appropriately chosen such
that $f^{b_i}|_{B_i}$ is a homeomorphism and that
\[
\mu_{i+1}(f^{b_i}(B_i))>0, \quad f^{b_i}(z_i)\in \Lambda_{i+1},
\]
and 
\[ \rho(f^k(B_i),\Crit) \geq \delta_i\quad \text{ for every } 1\le k\le b_i.
\]
The particular choice of the numbers and points will be specified in the following so that
we are able to apply Lemma~\ref{shadowing} to the each of the sets $\Lambda_i$ and the maps $g=f^{a_i}$.


Let us outline the following Cantor set construction.
Lemma~\ref{shadowing} enables us to select sufficiently many preimages for each of the disks $B_i$ (as  we choose  $B_i\subset B(v_i,\gamma_i)$ and then select preimages $g^{-n_i}(v_i)$ using the lemma).
The construction of this Cantor set is easily described in terms of backward branches: at level $i$ we start with the disk $B_i$, apply Lemma~\ref{shadowing} and  find a large number of components of $f^{-a_i n_i}(B_i)$ in $f^{b_{i-1}}(B_{i-1})$, then we go backwards through the bridge  obtaining the components in $B_{i-1}$. We repeat the procedure in $B_{i-1}$, $\ldots$, $B_1$.

Given the sequence of uniformly expanding repellers
$f^{a_i}|_{\Lambda_i}$ with equilibrium states $\mu_i$ with
Lyapunov exponent $\chi(\mu_i)$ and entropy $h_{\mu_i}(f^{a_i})$
(both with respect to the map $f^{a_i}$), let us denote
\[
\chi_i\eqdef \frac 1 {a_i} \chi(\mu_i), \quad h_i\eqdef
\frac 1 {a_i} h_{\mu_i}(f^{a_i}),\quad d_i\eqdef d(\mu_i) = \frac
{h_i} {\chi_i},
\]
and
\[
s_i \eqdef \lvert (f^{b_i})'(z_i)\rvert, \quad
t_i \eqdef \dist f^{b_i}|_{B_i}.
\]
Let
\[
w_i\eqdef \inf_{x\colon \rho(x,\Crit)>\delta_i}\lvert f'(x)\rvert,
\quad
W\eqdef\sup_{x\in J}\,\lvert f'(x)\rvert.
\]
The constants $\gamma_i$ can be chosen to be arbitrarily small
(at the cost of decreasing $r_i$ and increasing $b_i$, thus
changing $t_i$ and $s_i$ accordingly). In particular we can choose
each $\gamma_i$ sufficiently small such that Lemma~\ref{shadowing}
applies to  the map $g=f^{a_i}$ and the set $\Lambda_i$.
 Let
\[
V_1\eqdef B(\Lambda_1,\gamma_1)\quad\text{ and }\quad
V_{i+1}\eqdef f^{b_i}(B_i).
\]
We choose a fast decreasing sequence $\{\varepsilon_i\}_i$. We
will denote by $K_i$ the numbers $K_i=K(\Lambda_i, \mu_i,
V_i,\varepsilon_i,v_i)$ as given by Lemma~\ref{shadowing} (for
$g=f^{a_i}$) where we choose each point $v_i\in\Lambda_i$ such that
$B_i\subset B(v_i,\gamma_i)$. Let $A_1\eqdef f^{-a_1
n_1}_{y_j}(B_1)$ for one point $y_j$ and $n_1$ as provided by
Lemma~\ref{shadowing}. We have $\dist f^{a_1 n_1}|_{A_1}\le K_1$
and for every $x\in A_1$ and $m\leq n_1$ we have
\[
K_1^{-1} e^{a_1 m( \chi_1-\varepsilon_1) }< \lvert (f^{a_1
m})'(x)\rvert
 < K_1 e^{a_1 m(\chi_1+\varepsilon_1)}
\]
(note that $n_1$ can be chosen to be arbitrarily big, as guaranteed by
Lemma~\ref{shadowing}).

We apply Lemma~\ref{shadowing} now to $\Lambda_2$, $\mu_2$, $V_2$,
$\varepsilon_2$, $v_2$, $g=f^{a_2}$, which provides us with a
family $F_2\in f^{-a_2 n_2}(v_2)$. Let
$\widehat{A}_{2j}=f_{y_j}^{-a_2 n_2}(B(v_2,\gamma_2))$ for $y_j\in
F_2$. Those sets are contained in the set $V_2=f^{b_1}(B_1)$. We
also have $\dist f^{a_2 n_2}|_{\widehat A_{2j}}\le K_2$ and for
every $x\in \widehat A_{2j}$ and $m\leq n_2$ we have
\[
K_2^{-1} e^{a_2 m(\chi_2-\varepsilon_2)} < \lvert (f^{a_2
m})'(x)\rvert < K_2 e^{a_2 m(\chi_2+\varepsilon_2)}.
\]
Such sets will be used  later to distribute the w-measure
accordingly. It provides us also with a family of sets
$\widetilde{A}_{2j}\subset\widehat A_{2j}$  such that $f^{a_2
n_2}(\widetilde{A}_{2j})=B_2$. Such sets will be used later to
define our Cantor set on which the w-measure is supported.

\begin{figure}[h]
         \includegraphics[width=11cm]{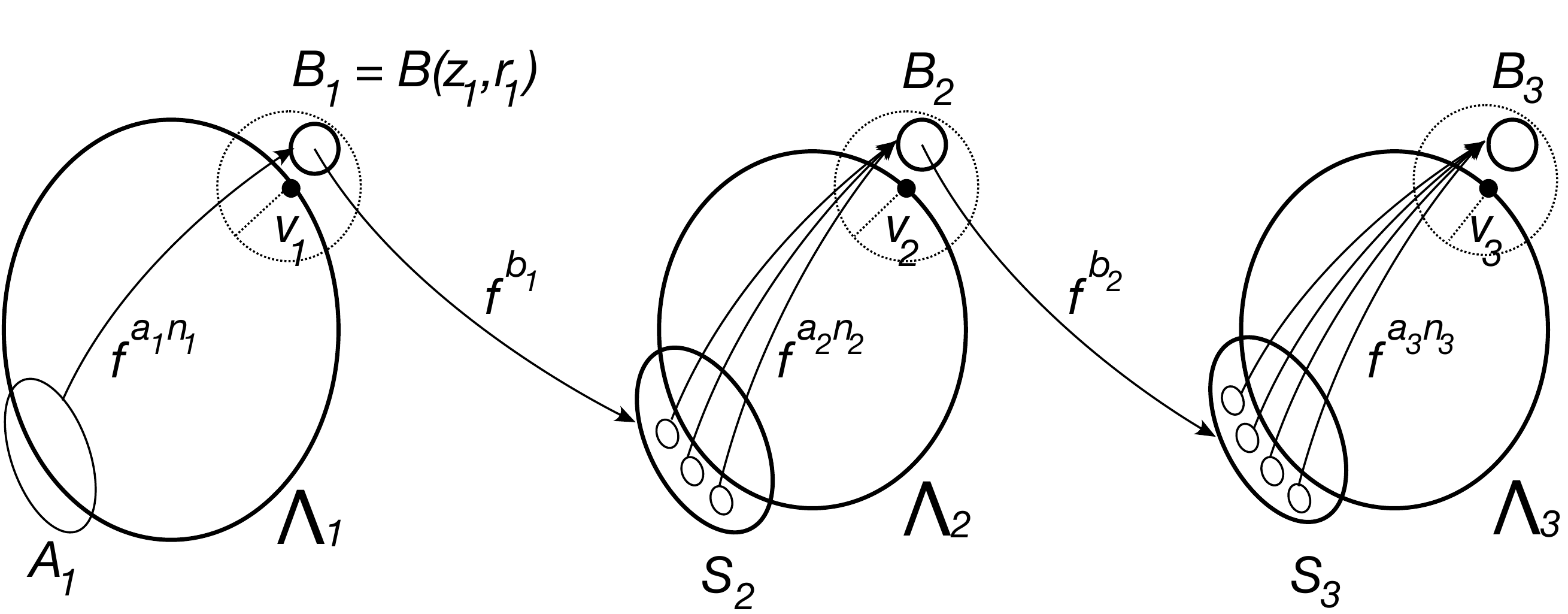}
    \caption{Connecting the hyperbolic sets by bridges.}\label{Fig.heute}
 \end{figure}
 
We repeat this procedure for every $i$, using
Lemma~\ref{shadowing} repeatedly: for every $i>1$  we find a
family $F_i\subset f^{-a_i n_i}(v_i)$ and the corresponding
components $\widehat A_{ij}$ of $f^{-a_i n_i}(B(v_i,\gamma_i))$
contained in $V_i$ that satisfy
\[
\rho(\widehat{A}_{i j}, \widehat{A}_{i k}) \geq K_i^{-1} \diam \widehat{A}_{i j}, \quad
j\ne k,
\]
and for every $x\in \widehat{A}_{ij}$ and $m\leq n_i$
\[
K_i^{-1} e^{a_i m (\chi_i - \varepsilon_i)} < \lvert (f^{a_i
m})'(x)\rvert < K_i e^{a_i m (\chi_i +\varepsilon_i)}
\]
and
\[
\dist f^{a_i n_i}|_{\widehat{A}_{ij}} \leq K_i.
\]
In addition, by Lemma~\ref{shadowing} iv) we have
\[
\mu_i\left(\bigcup_j \widehat A_{ij}\right)\ge K_i^{-1},
\]
and by Lemma~\ref{shadowing} vi) for any $x\in V_i$ and $r>0$ we have
\[
\mu_i\left( B(x,r)\cap \bigcup_j \widehat A_{ij}\right) \le K_i \,r^{d_i-\varepsilon_i}.
\]
Note here that for every $i\ge 1$ we have
\[
r_i\, s_i\, t_i^{-1} \leq \diam V_{i+1} \leq r_i \,s_i\, t_i .
\]

Let
\[
m_i \eqdef \sum_{k=1}^i \left(a_k n_k + b_k\right).
\]
Let $A_{2\,j}$ be the component of $A_1 \cap
f^{-m_1}(\widetilde{A}_{2j})$ for which $f^{a_1 n_1}(A_{2\,j})
\subset B_1$. Similarly, let $A_{i\,j_1 \ldots j_{i-1}}$ be the
component of $A_{(i-1)j_1 \ldots j_{i-2}} \cap
f^{-m_{i-1}}(\widetilde{A}_{i\,j_{i-1}})$ for which $f^{a_{i-1}
n_{i-1}
+m_{i-2}}(A_{i\,j_1 \ldots j_{i-1}}) \subset B_{i-1}$.\\
The sets
\[
A_i\eqdef \bigcup_{j_1\ldots j_{i-1}} A_{i\, j_1\ldots j_{i-1}}
\]
form a decreasing sequence of unions of topological balls.
Moreover, the pair $\left(A_{(i-1) j_1 \ldots j_{i-2}}, \{A_{i\,
j_1 \ldots j_{i-2}\,k}\}_k\right)$ is an image of $(V_i,
\{\widetilde{A}_{i \,k}\}_k)$ under a branch of the map
$f^{-m_{i-1}}$, the distortion of that branch is bounded by
\[
\widetilde{K}_{i-1}\eqdef \prod_{k=1}^{i-1} K_k t_k ,
\]
and the absolute value of its derivative is between
\begin{equation}\label{distr}
L_{i-1}\eqdef\prod_{k=1}^{i-1} s_k^{-1} t_k^{-1} e^{ - a_k n_k
(\chi_k + \varepsilon_k)} \quad\text{ and }\quad \widehat
L_{i-1}\eqdef \prod_{k=1}^{i-1} s_k^{-1} t_k e^{-a_k n_k (\chi_k -
\varepsilon_k)}
\end{equation}
($\widetilde{K}_i, L_i, \widehat L_i$ depend on $i$ only).
\begin{figure}[t]
         \includegraphics[width=11cm]{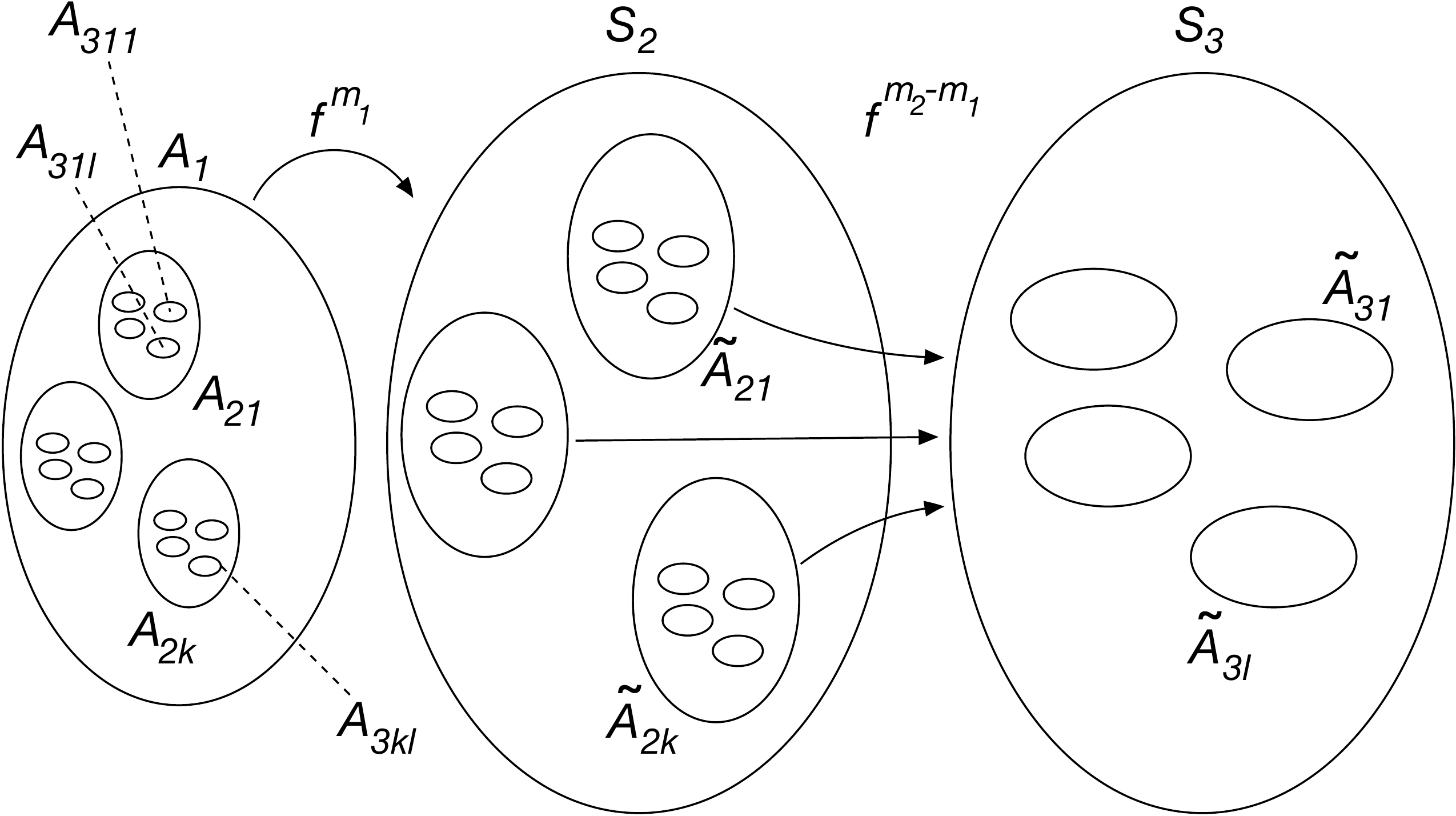}
    \caption{Local structure of the Cantor set $A$}\label{heu}
 \end{figure}
We can now define the Cantor set
\begin{equation}\label{cantorset}
A\eqdef\bigcap_{i\ge 1}A_i
\end{equation}
(compare Figure~\ref{heu}). We will summarize its geometric and
dynamic properties in the following lemma.

\begin{lemma} \label{aproposition}
The above defined set $A$ possesses the following properties:
\begin{itemize}
\item[i)] (Lyapunov exponents on the islands) for $x\in A_i$ and $k\leq n_i$ we have
\[
K_i^{-1} e^{a_i k (\chi_i - \varepsilon_i)} < \lvert
(f^{a_i k})'(f^{m_{i-1}}(x))\rvert < K_i e^{a_i k (\chi_i +
\varepsilon_i)},
\]
\item[ii)] (Lyapunov exponents on the bridges) for $x\in A_i$ and $k\leq b_i$ we have
\[
w_i^k < \lvert (f^k)'(f^{m_{i-1}+a_i n_i}(x))\rvert < W^k,
\]
\item[iii)] we have
\[
K_i^{-1} \widetilde{K}_{i-1}^{-1} L_{i-1} r_i \, e^{-a_i n_i
(\chi_i + \varepsilon_i)} \leq \diam A_{i j_1 \ldots j_{i-1}} \leq
K_i \widetilde{K}_{i-1} \widehat L_{i-1} r_i \,e^{- a_i n_i(\chi_i
- \varepsilon_i)},
\]
\item[iv)] there are at least $\exp(a_i n_i (h_i -
\varepsilon_i))$ sets $A_{i\, j_1 \ldots j_{i-1}}$ contained in
every set $A_{(i-1) j_1 \ldots j_{i-2}}$,\vspace{0.2cm} \item[v)]
for any $k_{i-1} \neq j_{i-1}$ and any $i, j_1, \ldots, j_{i-2}$
we have
\[
\rho(A_{i j_1 \ldots j_{i-1}}, A_{i j_1 \ldots j_{i-2} k_{i-1}}) \geq
\widetilde{K}_{i-1}^{-1} K_i^{-1} \diam A_{i j_1 \ldots j_{i-1}}
\]
\item[vi)] we have
\[
\mu_i\left(\bigcup_j\widehat A_{ij}\right) \ge K_i^{-1},
\]
\item[vii)] for any $x\in V_i$ and $r>0$ we have
\[
\mu_i\left(B(x,r)\cap\bigcup_j\widehat A_{ij}\right)\le K_i\,r^{d_i-\varepsilon_i},
\]
\item[viii)] we have
\[
    K_i^{-1}e^{-a_i n_i(h_i+\varepsilon_i)}\le
    \mu_i\left(\widehat A_{ij}\right) \le
    e^{-a_i n_i(h_i-\varepsilon_i)} .
\]
\end{itemize}
\end{lemma}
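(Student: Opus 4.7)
My plan is to verify each of the eight items by tracing through the construction preceding the statement and invoking Lemma~\ref{shadowing} at the appropriate level; no new analytic ingredient is needed. First I dispense with items (vi), (vii), (viii): these are literal restatements of Lemma~\ref{shadowing}(iv), (vi), (vii), applied with $g=f^{a_i}$, $\Lambda=\Lambda_i$, $\mu=\mu_i$, $V=V_i$, $v=v_i$, $\varepsilon=\varepsilon_i$, which is precisely the application through which the sets $\widehat A_{ij}$ and the constants $K_i$ were introduced. For item (i), I note that by construction any $x\in A_i$ satisfies $f^{m_{i-1}}(x)\in \widetilde A_{i\,j_{i-1}}\subset \widehat A_{i\,j_{i-1}}$ for the appropriate index, so Lemma~\ref{shadowing}(ii) at level $i$ delivers the asserted control on $\lvert(f^{a_ik})'\rvert$ for $k\le n_i$. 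For item (ii), the bridge $B_i$ is chosen so that $\rho(f^k(B_i),\Crit)\ge\delta_i$ for every $1\le k\le b_i$, and since $f^{a_in_i}(f^{m_{i-1}}(x))\in B_i$ for $x\in A_i$, the multiplicative estimate falls out of the definitions of $w_i$ and $W$.

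The core of the bookkeeping is item (iii), which I would prove inductively. The map $f^{m_{i-1}}$ restricted to $A_{i\,j_1\ldots j_{i-1}}$ decomposes as an alternation of $i-1$ island-blocks of length $a_k n_k$ (each with distortion $\le K_k$ and derivative absolute value in $[K_k^{-1}e^{a_kn_k(\chi_k-\varepsilon_k)},K_k e^{a_kn_k(\chi_k+\varepsilon_k)}]$) and $i-1$ bridge-blocks of length $b_k$ (each with distortion $\le t_k$ and derivative essentially $s_k$). Multiplying through, the composed distortion equals $\prod_{k=1}^{i-1}K_kt_k=\widetilde K_{i-1}$, while the two extremes of the composed derivative of $f^{-m_{i-1}}$ are exactly $L_{i-1}$ and $\widehat L_{i-1}$ as defined in~\eqref{distr}. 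Combined with the bounds $\diam\widetilde A_{i\,j_{i-1}}\in[K_i^{-1}r_ie^{-a_in_i(\chi_i+\varepsilon_i)},\;K_ir_ie^{-a_in_i(\chi_i-\varepsilon_i)}]$ coming from Lemma~\ref{shadowing}(ii) applied at the topmost level (since $f^{a_in_i}(\widetilde A_{i\,j_{i-1}})=B_i$ has radius $r_i$), this yields (iii). Item (iv) is then immediate from (vi) and (viii):
\[
\#\{j_{i-1}\}=\#F_i\;\ge\;\frac{\mu_i\bigl(\bigcup_j\widehat A_{ij}\bigr)}{\max_j\mu_i(\widehat A_{ij})}\;\ge\;K_i^{-1}e^{a_in_i(h_i-\varepsilon_i)},
\]
and because $n_i$ can be chosen arbitrarily large (Lemma~\ref{shadowing}), the factor $K_i^{-1}$ is absorbed into a marginal enlargement of $\varepsilon_i$. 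Finally, item (v) follows by pulling the separation $\rho(\widehat A_{ij},\widehat A_{ik})\ge K_i^{-1}\diam\widehat A_{ij}$ from Lemma~\ref{shadowing}(v) back through the distortion-$\widetilde K_{i-1}$ branch $f^{-m_{i-1}}$ that maps $\widetilde A_{i\,j_{i-1}}$ onto $A_{i\,j_1\ldots j_{i-1}}$; since $\widetilde A_{ij}$ and $\widehat A_{ij}$ have comparable diameters (both being bounded-distortion pullbacks of the concentric disks $B_i\subset B(v_i,\gamma_i)$), expressing the separation in terms of $\diam A_{i\,j_1\ldots j_{i-1}}$ produces exactly the constant $\widetilde K_{i-1}^{-1}K_i^{-1}$.

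The main obstacle is not analytic but notational: I must keep track of how distortion and derivative factors accumulate multiplicatively across the $i-1$ alternating island-and-bridge layers so that they collapse cleanly into the constants $\widetilde K_{i-1}$, $L_{i-1}$, $\widehat L_{i-1}$, without leaving stray multiplicative factors depending on $i$ in an uncontrolled way. A useful organizing principle is to prove (iii) and (v) simultaneously by induction on $i$, feeding the output of level $i-1$ into the pull-back step at level $i$ via the uniform distortion constant $t_i$ supplied by the bridge.
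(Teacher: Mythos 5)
Your proposal is correct and matches the paper's treatment: the paper gives no separate proof of this lemma, since items i)--viii) are exactly the estimates recorded during the construction of $A$ (the applications of Lemma~\ref{shadowing} producing the $\widehat A_{ij}$, the bridge condition $\rho(f^k(B_i),\Crit)\ge\delta_i$, and the statement that each $A_{i\,j_1\ldots j_{i-1}}$ is the image of $\widetilde A_{i\,j_{i-1}}$ under a branch of $f^{-m_{i-1}}$ with distortion $\widetilde K_{i-1}$ and derivative between $L_{i-1}$ and $\widehat L_{i-1}$). Your item-by-item verification, including absorbing the stray $K_i^{-1}$ in iv) into $\varepsilon_i$, is exactly the intended reading.
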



Additional assumptions on $\{n_i\}_i$ and  properties guaranteed by
Lemma~\ref{shadowing} will enable us to estimate the Hausdorff and packing dimensions of the
constructed set and describe the upper and lower Lyapunov exponents at each point.  This will be done in the following.

\subsubsection*{Construction of a w-measure on the Cantor set}

We continue to consider the Cantor set $A$ constructed in~\eqref{cantorset}.
Let $\mu$ be the probability measure which on each level
$i$ is distributed on the cylinder sets $A_{i\, j_1 \ldots
j_{i-1}}$ of level $i$ in the following way
\begin{equation}\label{wmeasu}
\mu(A_{i\, j_1 \ldots j_{i-1}}) \eqdef
\mu(A_{i-1\, j_1 \ldots j_{i-2}})
\frac{\mu_i(\widehat A_{i\,j_{i-1}})}
    {\sum_k\mu_i(\widehat A_{i\,k})}\,.
\end{equation}
We extend the measure $\mu$ arbitrarily to the Borel $\sigma$-algebra of $A$. We call the probability measure a \emph{w-measure} with respect to the sequence $\{f|_{\Lambda_i},\phi_i,\mu_i\}_i$.
\\[-0.2cm]

After these preparations, we are now able to prove Theorem~\ref{closing}. We will continue to use the notations in the above construction of the set $A$.

\begin{proof}[Proof of  Theorem~\ref{closing}]
In the course of the following proof we will choose some sequence $\{\varepsilon_k\}_k$ and then
construct a sequence of positive integers $\{n_i\}_i$. Here each of those numbers $n_i$ has to satisfy several conditions that depend on $\{\varepsilon_k\}_k$, the parameters of the
hyperbolic sets $\{\Lambda_k\}_k$ and the measures $\{\mu_k\}_k$, and the previously chosen
numbers $n_j$, $j=1$, $2$, $\ldots$, $i-1$. Naturally, it is always possible to satisfy all those conditions at the same time.

We will first check that the Cantor set defined in~\eqref{cantorset} satisfies
\[
A\subset \cL\Big(\liminf \chi(\mu_i), \limsup \chi(\mu_i)\Big)
\]
(under some appropriate assumptions about $\{n_i\}_i$). Then we
will estimate the Hausdorff and packing dimensions of $A$ using the w-measure $\mu$ defined in~\eqref{wmeasu}.

Let us first consider the Lyapunov exponent at a point in the set $A$.
Let
\[
\ell_n(x) \eqdef \frac 1 n \log\, \lvert(f^n)'(x)\rvert.
\]
For $n\leq a_1 n_1$ we have \[\ell_n(x) \in \left(\chi_1 -
\varepsilon_1 - \frac{1}{n}\log K_1 - O(\frac {a_1} n), \chi_1
+\varepsilon_1 +\frac{1}{n} \log K_1 + O(\frac {a_1} n)\right).\]
We know that $f^{n_1+k}(x)$ stays in distance at least $\delta_1$
from the critical points for every $k\leq b_1$. Hence, for $a_1
n_1<n \leq a_1 n_1 + b_1$ we have
\begin{equation}\label{ineq1}
\ell_n(x) = \frac {a_1 n_1} n \ell_{a_1 n_1}(x) + \frac {n- a_1 n_1} n
O(\lvert\log w_1\rvert)
\end{equation}
and for $n_1$ big enough (in comparison with $\log K_1$ and $b_1
\lvert\log w_1\rvert$) the right hand side of~\eqref{ineq1}  is
between $\chi_1 - 2\varepsilon_1$ and $\chi_1+2\varepsilon_1$.
For $m_{i-1} < n \leq m_{i-1}+a_i n_i$ we have
\[
\ell_n(x) = \frac {m_{i-1}} n \ell_{m_{i-1}}(x) + \frac 1 n
\log\,\lvert(f^{n-m_{i-1}})'(f^{m_{i-1}}(x))\rvert + O(\frac {a_i}
n)
\]
while for $m_{i-1} + a_i n_i < n \leq m_i$ we have
\[
\ell_n(x) = \frac {m_{i-1}} n \ell_{m_{i-1}}(x) + \frac 1 n \log\,
\lvert(f^{a_i n_i})'(f^{m_{i-1}}(x))\rvert + \frac {n-m_{i-1} -
a_i n_i} n O(\lvert\log w_i\rvert).
\]
Estimating the second summand using Lemma~\ref{aproposition} i)
and the third one using Lemma~\ref{aproposition} ii) and assuming
that $n_i$ is big enough (in comparison with $n_{i-1}$, $b_i
\lvert\log w_i\rvert$, $a_{i+1}$, and $\log K_i$), we can first
prove that
\[
\left\lvert \ell_{m_i}(x) - \chi_i \right\rvert < 2 \varepsilon_i
\]
(by induction) and then prove that for all $m_i < n < m_{i+1}$ we have
\[
\left\lvert \ell_n(x) - \left(\frac {m_i} n \chi_i + \frac {n-m_i} n
\chi_{i+1}\right)\right\rvert
< 2(\varepsilon_i + \varepsilon_{i+1}).
\]
As the upper (the lower) Lyapunov exponent of $x$ equals the upper (the lower)
limit of $\ell_n(x)$ as $n\to\infty$, we have shown that every point $x\in A$
satisfies
\[
\underline\chi(x)=\liminf_{i\to\infty}\chi(\mu_i), \quad
\overline\chi(x)=\limsup_{i\to\infty}\chi(\mu_i).
\]

Let us now estimate the Hausdorff and packing dimensions of the set $A$.
To do so, we will
apply the Frostman lemma. 
Let us calculate the pointwise dimension of the measure $\mu$ defined in~\eqref{wmeasu} at an arbitrary point $x\in A$. Notice that we can write
\[
\{x\} = \bigcap_{i=1}^\infty A_{i\, j_1 \ldots j_{i-1}}
\]
for some appropriate symbolic sequence $(j_1j_2\ldots )$. By Lemma~\ref{aproposition} v), the ball  $B(x,r)$ does not intersect any of the sets $A_{i\, k_1 \ldots
k_{i-1}}$ if $k_1 \ldots k_{i-1} \ne j_1 \ldots
j_{i-1}$
 whenever we have
\[
r\le R_i(x)\eqdef  \widetilde{K}_{i-1}^{-1} K_i^{-1} \diam A_{i\, j_1 \ldots j_{i-1}}.
\]
Consider $R_{i+1}(x)<r\le R_i(x)$. We have
\[\begin{split}
\mu(B(x,r))
&\le \sum_{A_{i+1\,j_1\ldots j_{i-1}k}\cap B(x,r)\ne\emptyset}
    \mu(A_{i+1\,j_1\ldots j_{i-1}k})\\
&\le \sum_{A_{i+1\,j_1\ldots j_{i-1}k}
    \subset B\left(x,r+\max_\ell\diam A_{i+1\,j_1\ldots j_{i-1}\ell}\right)}
    \mu(A_{i+1\,j_1\ldots j_{i-1}k}).
\end{split}\]
Let $D_i\eqdef \max_\ell\diam A_{i+1\,j_1\ldots j_{i-1}\ell}$. We continue with
\[\begin{split}
\mu(B(x,r))
&\le \frac{ \mu(A_{i\,j_1\ldots j_{i-1}})}{\sum_\ell\mu_{i+1}(\widehat A_{i+1\,\ell})}
    \sum_{k\colon \widehat A_{i+1\,k}\subset f^{m_i}(B(x ,r+D_i )) }
    \mu_{i+1}(\widehat A_{i+1\,k})      \\
&\le    \frac{ \mu(A_{i\,j_1\ldots j_{i-1}})}{\sum_\ell\mu_{i+1}(\widehat A_{i+1\,\ell})}
    \mu_{i+1}\left( B\left(f^{m_i}(x),L_i^{-1}(r+D_i) \right)\right) .
\end{split}\]
Using Lemma~\ref{aproposition} vi) and vii) we can estimate
\begin{eqnarray}
\mu(B(x,r))
&\le&   \mu(A_{i\,j_1\ldots j_{i-1}}) K_{i+1}\,
    \mu_{i+1}\left( B\left(f^{m_i}(x),L_i^{-1}(r+D_i) \right)\right)\notag\\
&\le&       \mu(A_{i\,j_1\ldots j_{i-1}})
        \frac{K_{i+1}^2}{L_i^{d_{i+1}-\varepsilon_{i+1}}}
        \left(r+D_i\right)^{d_{i+1}-\varepsilon_{i+1}} . \label{poj}
\end{eqnarray}
Let
\begin{equation}\label{Xi}
    \Xi\eqdef   \mu(A_{i\,j_1\ldots j_{i-1}})
        \frac{K_{i+1}^2}{L_i^{d_{i+1}-\varepsilon_{i+1}}}.
        \end{equation}
Using Lemma \ref{aproposition} vi) and viii), we can now estimate the first factor in~\eqref{Xi} and  we obtain
\[\begin{split}
\log\mu(A_{i\,j_1\ldots j_{i-1}}) &\le
\sum_{k=2}^i \log \frac{\mu(A_{k\,j_1\ldots j_{k-1}})}{\mu(A_{k-1\,j_1\ldots j_{k-2}})}\\
&\le \sum_{k=2}^i \left( \log\mu_k\left(\widehat A_{k\,j_{k-1}}\right)+\log K_k\right)\\
&\le - \sum_{k=2}^i \left(a_k n_k(h_k-\varepsilon_k) +\log
K_k\right) \le -a_i n_i(h_i-2\varepsilon_i)
\end{split}\]
provided that we assume that $n_i$ has been chosen big enough (in comparison with
$a_k$, $n_k$, $\log K_k$, $k<i$). For the second factor in~\eqref{Xi} we yield
\[
-\log L_i = \sum_{k=1}^i\log s_k+\log t_k+a_k
n_k(\chi_k+\varepsilon_k) \le a_i n_i(\chi_i+2\varepsilon_i).
\]
provided that we assume that $n_i$ has been chosen big enough (in comparison with 
$a_k$, $n_k$, $\log K_k$, and $s_k$, $k<i$). This implies that
\[
\log\Xi = a_i n_i\chi_i \left(d_{i+1}-d_i\right) + n_i\,
O(\varepsilon_i,\varepsilon_{i+1}).
\]
Further,  using~\eqref{distr} and Lemma~\ref{aproposition} iii) we obtain
\[
\frac{\diam A_{i+1\,j_1\ldots j_{i-1}\ell}}{R_{i+1}}
\le \widetilde K_i^3\, K_{i+1}^3 \prod_{k=1}^{i+1}e^{2n_k\varepsilon_k}
\]
and hence
\[
D_i\le e^{3n_{i+1}\varepsilon_{i+1}}\, R_{i+1}.
\]
In addition, by Lemma~\ref{aproposition} iii) we have
\[
\log R_{i+1} = a_{i+1} n_{i+1}(\chi_{i+1}+O(\varepsilon_{i+1}))
\]
and
\[
\log D_i\le -a_{i+1} n_{i+1}(\chi_{i+1}+O(\varepsilon_{i+1})).
\]
assuming that $n_i$ has been chosen big enough. To
estimate~\eqref{poj}, we consider now the two cases: a) that $r\le
D_i$ and b) that $r> D_i$. In case a) we have that
\[
\mu(B(x,r))\le \Xi\,(2D_i)^{d_{i+1}-\varepsilon_{i+1}}.
\]
Note that the  the right hand side of this estimate no longer depends on $r$ and hence,
\begin{equation} \label{est11}
\frac{\log\mu(B(x,r))}{\log r}
\ge \frac{\log\left(\Xi\cdot(2D_i)^{d_{i+1}-\varepsilon_{i+1}}\right)}{\log R_{i+1}}
= O\left(\frac{n_i}{n_{i+1}}\right) + d_{i+1} + O(\varepsilon_{i+1}).
\end{equation}
In case b) we have
\[
\mu(B(x,r))\le \Xi\,(2r)^{d_{i+1}-\varepsilon_{i+1}}
\]
and hence
\[\begin{split}
\frac{\log\mu(B(x,r))}{\log r}
&\ge \frac{\log\left(\Xi\cdot(2\,r)^{d_{i+1}-\varepsilon_{i+1}}\right)}{\log r}.
\end{split}\]
We again need to distinguish two cases: If $d_{i+1}>d_i$, then
\begin{equation} \label{est12}
\frac{\log\mu(B(x,r))}{\log r} \ge \frac{\log\mu(B(x,R_i))}{\log R_i} \ge
d_i + O(\varepsilon_i,\varepsilon_{i+1}),
\end{equation}
while in the case $d_i\ge d_{i+1}$ we have
\begin{equation} \label{est13}
\frac{\log\mu(B(x,r))}{\log r} \ge \frac{\log\mu(B(x,R_{i+1}))}{\log R_{i+1}} \ge
d_{i+1} + O\left(\frac{n_i}{n_{i+1}}\right) + O(\varepsilon_i,\varepsilon_{i+1}).
\end{equation}
The estimations \eqref{est11},~\eqref{est12}, and \eqref{est13}
prove that if the sequence $\{n_i\}_i$ increases fast enough then for every $x\in A$ we
have
\[
\underline d_\mu(x)\ge \liminf_{i\to\infty} d_i
\quad
\text{ and }
\quad \overline d_\mu(x)\ge \limsup_{i\to\infty} d_i .
\]
Hence, applying the Frostman lemma, we obtain 
\[
\dim_{\rm H}A \ge  \liminf_{i\to\infty} d_i
\quad
\text{ and }
\quad
\dim_{\rm P}A \ge  \limsup_{i\to\infty} d_i ,
\] 
and hence the assertion of Theorem~\ref{closing} follows.
 \end{proof}
\bibliographystyle{amsplain}

\end{document}